\newtheorem{theorem}{Theorem}
\theoremstyle{plain}
\newtheorem{corollary}{Corollary}
\newtheorem{definition}{Definition}
\newtheorem{lemma}{Lemma}
\newtheorem{proposition}{Proposition}
\newtheorem{remark}{Remark}
\begin{document}
\title[On a class of nonlinear elliptic, anisotropic singular...]{On a class
of nonlinear elliptic, anisotropic singular perturbations problems}
\author{Ogabi Chokri}
\email{chokri.ogabi@ac-grenoble.fr}
\date{September 30, 2014}
\keywords{Anisotropic singular perturbations, elliptic problem, asymptotic
behaviour}
\subjclass{35J60, 35B25}
\maketitle
\address{ \ \ \ \ \ \ \ \ \ \ \ \ \ \ \ \ \ \ \ \textit{Academie de
Grenoble, 38000. Grenoble. France}}

\begin{abstract}
In this article we study the asymptotic behavior,as $\epsilon \rightarrow 0$%
, of the solution of a nonlinear elliptic, anisotropic singular
perturbations problem in cylindrical domain, the limit problem is given and
strong convergences are proved, we also give an application to
intergo-differential problems.
\end{abstract}

\section{Description of the problem and main theorems}

The aim of this manuscript is to analyze nonlinear diffusion problems when
the diffusion coefficients in certain directions are going towards zero. We
consider a general nonlinear elliptic singularly perturbed problem which can
be considered as a generalization to some class of integro-differential
problem (see \cite{7}), let us begin by describing the linear part of the
problem as given in \cite{1} and \cite{7}. For $\Omega =\omega _{1}\times
\omega _{2}$ a bounded cylindrical domain of $%
%TCIMACRO{\U{211d} }%
%BeginExpansion
\mathbb{R}
%EndExpansion
^{N}$ ($N\geq 2$) where $\omega _{1},\omega _{2}$ are Lipschitz domains of $%
%TCIMACRO{\U{211d} }%
%BeginExpansion
\mathbb{R}
%EndExpansion
^{p}$ and $%
%TCIMACRO{\U{211d} }%
%BeginExpansion
\mathbb{R}
%EndExpansion
^{N-p}$ respectively, we denote by $x=(x_{1},...,x_{N})=(X_{1},X_{2})$ the
points in $%
%TCIMACRO{\U{211d} }%
%BeginExpansion
\mathbb{R}
%EndExpansion
^{N}$ where%
\begin{equation*}
X_{1}=(x_{1},...,x_{p})\text{ }\in \omega _{1}\text{and }%
X_{2}=(x_{p+1},...,x_{N})\in \omega _{2},
\end{equation*}

i.e. we split the coordinates into two parts. With this notation we set%
\begin{equation*}
\nabla =(\partial _{x_{1}},...,\partial _{x_{N}})^{T}=\binom{\nabla _{X_{1}}%
}{\nabla _{X_{2}}},
\end{equation*}

where 
\begin{equation*}
\nabla _{X_{1}}=(\partial _{x_{1}},...,\partial _{x_{p}})^{T}\text{ and }%
\nabla _{X_{2}}=(\partial _{x_{p+1}},...,\partial _{x_{N}})^{T}
\end{equation*}

To make it simple we use this abuse of notation 
\begin{equation*}
\nabla _{X_{i}}u\in L^{2}(\Omega )\text{ instead of }\nabla _{X_{i}}u\in %
\left[ L^{2}(\Omega )\right] ^{p;N-p}\text{ for a function }u
\end{equation*}

Let $A=(a_{ij}(x))$ be a $N\times N$ symmetric matrix which satisfies the
ellipticity assumption%
\begin{equation*}
\exists \lambda >0:A\xi \cdot \xi \geq \lambda \left\vert \xi \right\vert
^{2}\text{ }\forall \xi \in R^{N}\text{ for a.e }x\in \Omega ,
\end{equation*}

and 
\begin{equation}
a_{ij}(x)\in L^{\infty }(\Omega ),\forall i,j=1,2,....,N,\text{ }  \label{1}
\end{equation}

where $"\cdot "$ is the canonical scalar product on $R^{N}$. We decompose $A$
into four blocks%
\begin{equation*}
A=\left( 
\begin{array}{cc}
A_{11} & A_{12} \\ 
A_{21} & A_{22}%
\end{array}%
\right) ,
\end{equation*}

where $A_{11}$, $A_{22}$ are respectively $p\times p$ and $(N-p)\times (N-p)$
matrices. For $0<\epsilon \leq 1$ we set%
\begin{equation*}
A_{\epsilon }=\left( 
\begin{array}{cc}
\epsilon ^{2}A_{11} & \epsilon A_{12} \\ 
\epsilon A_{21} & A_{22}%
\end{array}%
\right) ,
\end{equation*}%
\rule{0pt}{0pt}

then we have therefore, for a.e. $x\in \Omega $ and every $\xi \in R^{N}$%
\begin{eqnarray}
A_{\epsilon }\xi \cdot \xi &\geq &\lambda \left( \epsilon ^{2}\left\vert 
\overline{\xi _{1}}\right\vert ^{2}+\left\vert \overline{\xi _{2}}%
\right\vert ^{2}\right) \geq \lambda \epsilon ^{2}\left\vert \xi \right\vert
^{2}\text{ }\forall \xi \in R^{N}\text{ },  \label{3} \\
\text{and }A_{22}\overline{\xi _{2}}\cdot \overline{\xi _{2}} &\geq &\lambda
\left\vert \overline{\xi _{2}}\right\vert ^{2},  \notag
\end{eqnarray}

where we have set%
\begin{equation*}
\xi =\binom{\overline{\xi _{1}}}{\overline{\xi _{2}}},
\end{equation*}

with,%
\begin{equation*}
\overline{\xi _{1}}=(\xi _{1},.....,\xi _{p})^{T}\text{ and }\overline{\xi
_{2}}=(\xi _{p+1},.....,\xi _{N})^{T}
\end{equation*}

And finally let $B:L^{2}(\Omega )\rightarrow L^{2}(\Omega )$ be a nonlinear
locally-Liptchitz operator i.e, for every bounded set $E\subset L^{2}(\Omega
)$ there exists $K_{E}\geq 0$ such that 
\begin{equation}
\forall u,v\in E:\left\Vert B(u)-B(v)\right\Vert _{L^{2}(\Omega )}\leq
K_{E}\left\Vert u-v\right\Vert _{L^{2}(\Omega )},  \label{2}
\end{equation}

,and $B$ satisfies the growth condition%
\begin{equation}
\exists r>2,\text{ }M\geq 0,\text{ }\forall u\in L^{2}(\Omega ):\left\Vert
B(u)\right\Vert _{L^{r}(\Omega )}\leq M\left( 1+\left\Vert u\right\Vert
_{L^{2}(\Omega )}\right) \text{, }  \label{4}
\end{equation}

We define the space%
\begin{equation*}
V=\left\{ u\in L^{2}(\Omega ):\nabla _{X_{1}}u\in L^{2}(\Omega )\right\}
\end{equation*}

Moreover we suppose that for every $E\subset V$ bounded in $L^{2}(\Omega )$
we have%
\begin{equation}
\overline{conv}\left\{ B\left( E\right) \right\} \subset V,  \label{5}
\end{equation}

where $\overline{conv}\left\{ B\left( E\right) \right\} $ is the closed
convex hull of $B\left( E\right) $ in $L^{2}(\Omega ).$This last condition
is the most crucial, it will be used in the proof of the interior estimates
and the convergence theorem.

For $\beta >M\left\vert \Omega \right\vert ^{\frac{1}{2}-\frac{1}{r}}$ we
consider the problem 
\begin{equation}
\left\{ 
\begin{array}{cc}
\dint\limits_{\Omega }A_{\epsilon }\nabla u_{\epsilon }.\nabla \varphi
dx+\beta \dint\limits_{\Omega }u_{\epsilon }\varphi dx\text{ } & 
=\dint\limits_{\Omega }B(u_{\epsilon })\varphi dx\text{, \ }\forall \varphi
\in \mathcal{D}(\Omega ) \\ 
u_{\epsilon }\in H_{0}^{1}(\Omega )\text{ \ \ \ \ \ \ \ \ \ \ \ \ \ \ \ \ }
& 
\end{array}%
\right.  \label{6}
\end{equation}

The existence of $u_{\epsilon }$ will be proved in the next section, Now,
passing to the limit $\epsilon \rightarrow 0$ formally in (\ref{6}) we
obtain the limit problem 
\begin{equation}
\dint\limits_{\Omega }A_{22}\nabla _{X_{2}}u_{0}.\nabla _{X_{2}}\varphi
dx+\beta \dint\limits_{\Omega }u_{0}\varphi dx=\dint\limits_{\Omega
}B(u_{0})\varphi dx\text{, \ }\forall \varphi \in \mathcal{D}(\Omega )
\label{10}
\end{equation}

Our goal is to prove that $u_{0}$ exists and it satisfies (\ref{10}), and
give a sense to the formal convergence $u_{\epsilon }\leadsto u_{0}$,
actually we would like to obtain convergence in $L^{2}(\Omega ).$ We refer
to \cite{1} for more details about the linear theory of problem (\ref{6}).
However the nonlinear theory is poorly known, a monotone problem has been
solved in \cite{2} (using monotonicity argument), and also a case where $B$
is represented by an integral operator has been studied in \cite{7} (in the
last section of this paper, we shall give an application to
integro-differential problems). Generally, in singular perturbation problems
for PDEs, a simple analysis of the problem gives only weak convergences, and
often it is difficult to prove strong convergence, the principal hardness is
the passage to the limit in the nonlinear term. In this article we expose a
resolution method based on the use of several approximated problems
involving regularization with compact operators and truncations. Let us give
the main results.

\begin{theorem}
(Existence and $L^{r}$-regularity of solutions) Assume (\ref{1}), (\ref{3}),
(\ref{4}), and that $B$ is continuous on $L^{2}(\Omega )$ ( not necessarily
locally-Lipschitz) then ($\ref{6}$) has at least a solution $u_{\epsilon
}\in H_{0}^{1}(\Omega ).$ Moreover, if $u_{\epsilon }\in H_{0}^{1}(\Omega )$
is a solution to (\ref{6}) then $\left\Vert u_{\epsilon }\right\Vert
_{L^{r}(\Omega )}\leq \frac{M}{\beta -M\left\vert \Omega \right\vert ^{\frac{%
1}{2}-\frac{1}{r}}}$ for every $\epsilon >0.$
\end{theorem}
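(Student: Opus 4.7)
I would split the theorem into its two independent assertions and treat them respectively by Schauder's fixed point theorem and by a truncated power test function.

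For existence, I would freeze the nonlinearity: given $v \in L^{2}(\Omega)$, the bilinear form $a_{\epsilon}(u,\varphi) := \int_{\Omega} A_{\epsilon}\nabla u \cdot \nabla \varphi\,dx + \beta \int_{\Omega} u \varphi\,dx$ is continuous and coercive on $H_{0}^{1}(\Omega)$ (coercivity from (\ref{3}) together with $\beta > 0$), and since (\ref{4}) gives $B(v) \in L^{r}(\Omega) \subset L^{2}(\Omega)$, Lax--Milgram produces a unique $u = T(v) \in H_{0}^{1}(\Omega)$ with $a_{\epsilon}(u,\varphi) = \int_{\Omega} B(v)\varphi\,dx$ for every $\varphi \in H_{0}^{1}(\Omega)$. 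Testing with $u$ itself and applying Hölder together with (\ref{4}) gives $\beta \|u\|_{L^{2}} \leq M|\Omega|^{1/2 - 1/r}(1 + \|v\|_{L^{2}})$; the hypothesis $\beta > M|\Omega|^{1/2-1/r}$ then guarantees that a closed $L^{2}$-ball of sufficiently large radius is invariant under $T$. Continuity of $T : L^{2}(\Omega) \to L^{2}(\Omega)$ follows from continuity of $B$ composed with the continuous linear solution operator, and compactness follows from the Rellich embedding $H_{0}^{1}(\Omega) \hookrightarrow L^{2}(\Omega)$. Schauder's theorem then supplies a fixed point, which is a solution of (\ref{6}).

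For the $L^{r}$ bound, let $u_{\epsilon} \in H_{0}^{1}(\Omega)$ be any solution and let $T_{k}$ denote the usual truncation at height $k$. I would test (\ref{6}) against $\varphi_{k} := |T_{k}(u_{\epsilon})|^{r-2} T_{k}(u_{\epsilon})$, which lies in $H_{0}^{1}(\Omega) \cap L^{\infty}(\Omega)$. A chain-rule computation yields $\nabla \varphi_{k} = (r-1)|T_{k}(u_{\epsilon})|^{r-2} \chi_{\{|u_{\epsilon}| \leq k\}} \nabla u_{\epsilon}$, so the elliptic term is nonnegative by (\ref{3}) and may be discarded. A case split on $\{|u_{\epsilon}| \leq k\}$ and its complement shows $u_{\epsilon}\varphi_{k} \geq |T_{k}(u_{\epsilon})|^{r}$ pointwise, while Hölder with exponents $r$ and $r/(r-1)$ bounds the right-hand side by $\|B(u_{\epsilon})\|_{L^{r}} \|T_{k}(u_{\epsilon})\|_{L^{r}}^{r-1}$. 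Inserting (\ref{4}), dividing, and letting $k \to \infty$ by monotone convergence yields $u_{\epsilon} \in L^{r}(\Omega)$ with $\beta \|u_{\epsilon}\|_{L^{r}} \leq M(1 + \|u_{\epsilon}\|_{L^{2}})$. The interpolation $\|u_{\epsilon}\|_{L^{2}} \leq |\Omega|^{1/2 - 1/r} \|u_{\epsilon}\|_{L^{r}}$ (valid since $r > 2$ and $\Omega$ is bounded) and a final rearrangement — legitimate because of the smallness of $M|\Omega|^{1/2-1/r}$ relative to $\beta$ — produce the announced constant.

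The main technical obstacle is the use of the truncated power test function: one must justify admissibility of $\varphi_{k}$ in $H_{0}^{1}(\Omega)$ via the Stampacchia chain rule, and one must take $k \to \infty$ \emph{before} invoking the interpolation inequality, since the intermediate uniform bound on $\|T_{k}(u_{\epsilon})\|_{L^{r}}$ depends on $\|u_{\epsilon}\|_{L^{2}}$, which is a priori finite only because $u_{\epsilon} \in H_{0}^{1}(\Omega)$ on a bounded domain.
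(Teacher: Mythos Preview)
Your existence argument via Schauder's fixed point theorem is essentially the paper's (the paper packages the invariant set as an $H_{0}^{1}$-bounded, $L^{2}$-closed set $S$, which is then compact in $L^{2}$; you instead take a plain $L^{2}$-ball and use compactness of the map $T$ via Rellich---both are standard variants of Schauder and lead to the same conclusion).

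Your $L^{r}$-estimate, on the other hand, follows a genuinely different route. The paper proceeds by \emph{duality}: for $g\in\mathcal{D}(\Omega)$ it solves the adjoint linear problem $a_{\epsilon}(w_{\epsilon},\varphi)=\int_{\Omega}g\varphi$, uses the \emph{symmetry} of $A_{\epsilon}$ to rewrite $\int_{\Omega}u_{\epsilon}g=\int_{\Omega}B(u_{\epsilon})w_{\epsilon}$, and then bounds $\|w_{\epsilon}\|_{L^{s}}$ by testing the $w_{\epsilon}$-equation against $\rho(w_{\epsilon})$ with $\rho(x)=x(x^{2}+\delta)^{(s-2)/2}$. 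This yields $\beta\|w_{\epsilon}\|_{L^{s}}\le\|g\|_{L^{s}}$, whence $\|u_{\epsilon}\|_{L^{r}}\le M(1+\|u_{\epsilon}\|_{L^{2}})/\beta$ by duality; the final rearrangement is identical to yours. Your direct approach---testing against $|T_{k}(u_{\epsilon})|^{r-2}T_{k}(u_{\epsilon})$, dropping the elliptic term by sign, and passing $k\to\infty$---is shorter, needs only the Stampacchia chain rule rather than an auxiliary boundary-value problem, and in particular does \emph{not} use the symmetry of $A$; the paper's duality method, by contrast, is borrowed from the general semilinear regularity framework of \cite{3} and would adapt more readily to situations where one has better control of the adjoint operator than of the equation itself.
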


For the convergence theorem and the interior estimates we need the following
assumption 
\begin{equation}
\text{ }\partial _{k}A_{22}\text{, }\partial _{i}a_{ij}\text{, }\partial
_{j}a_{ij}\in L^{\infty }(\Omega )\text{ \ }k=1,...,p,\text{\ \ \ }i=1,...,p,%
\text{ \ \ }j=p+1,...,N  \label{34}
\end{equation}

\begin{theorem}
(Interior estimates) Assume (\ref{1}), (\ref{3}), (\ref{2}), (\ref{4}), (\ref%
{5}), (\ref{34})$.$ Let $(u_{\epsilon })\subset H_{0}^{1}(\Omega )$ be a
sequence of solutions to (\ref{6}) then for every open set$\ \Omega ^{\prime
}\subset \subset \Omega $ (i.e $\overline{\Omega ^{\prime }}\subset \Omega $%
) there exists $C_{\Omega ^{\prime }}\geq 0$ (independent of $\epsilon $)
such that%
\begin{equation*}
\forall \epsilon :\left\Vert u_{\epsilon }\right\Vert _{H^{1}(\Omega
^{\prime })}\leq C_{\Omega ^{\prime }}
\end{equation*}
\end{theorem}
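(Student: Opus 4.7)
The standard energy estimate, obtained by testing (\ref{6}) with $u_\epsilon$ and using (\ref{3}), (\ref{4}) and Theorem 1, immediately yields uniform bounds on $\|u_\epsilon\|_{L^2(\Omega)}$, $\|\nabla_{X_2} u_\epsilon\|_{L^2(\Omega)}$ and $\epsilon\|\nabla_{X_1} u_\epsilon\|_{L^2(\Omega)}$. So the content of the theorem is to promote the $\epsilon$-weighted $X_1$-bound into an un-weighted interior one; the other two components of the $H^1(\Omega')$ norm are already uniform on all of $\Omega$.

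To control $\nabla_{X_1} u_\epsilon$ away from $\partial\omega_1$, I would formally differentiate (\ref{6}) in each direction $x_k$, $k\leq p$, and test against $\eta^2 \partial_k u_\epsilon$, where $\eta = \eta(X_1) \in C_c^\infty(\omega_1)$ is a slab-type cutoff with $\eta \equiv 1$ on some $\omega_1' \supset \pi_{X_1}(\Omega')$. The use of a cutoff depending only on $X_1$ is essential: a non-zero $\nabla_{X_2}\eta$ would produce an error of order $\|\nabla_{X_1} u_\epsilon\|_{L^2}^2/\epsilon^2$ that cannot be absorbed, whereas with $\nabla_{X_2}\eta=0$ every remaining error term carries an $\epsilon$-factor and is controlled by the basic bound $\epsilon\|\nabla_{X_1} u_\epsilon\|_{L^2} \leq C$. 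No $X_2$-localization is needed since $\nabla_{X_2} u_\epsilon$ is already controlled on all of $\Omega$, and $\eta^2 \partial_k u_\epsilon \in H_0^1(\Omega)$ because $u_\epsilon$ vanishes on $\omega_1 \times \partial\omega_2$. Rigorously, I would implement the differentiation via Nirenberg difference quotients: test (\ref{6}) with $\varphi = -D_{-h,k}(\eta^2 D_{h,k} u_\epsilon)$, apply discrete integration by parts and the product rule to $D_{h,k}(A_\epsilon \nabla u_\epsilon)$, and let $h\to 0$. Here assumption (\ref{34}) is tailored precisely for the computation: the $L^\infty$-derivatives $\partial_k A_{22}$ and $\partial_i a_{ij}, \partial_j a_{ij}$ control the $O(1)$ contributions from $D_{h,k} A_\epsilon$, while the possibly unbounded $\partial_k A_{11}$ appears only with a prefactor $\epsilon^2$. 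Summing over $k$ should yield
\begin{equation*}
\beta \int_\Omega \eta^2 |\nabla_{X_1} u_\epsilon|^2 \, dx \leq C_0 + \int_\Omega \nabla_{X_1} B(u_\epsilon) \cdot \eta^2 \nabla_{X_1} u_\epsilon \, dx
\end{equation*}
with $C_0$ independent of $\epsilon$.

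A final application of Cauchy--Schwarz with a small parameter absorbs $\tfrac{\beta}{2}\|\eta \nabla_{X_1} u_\epsilon\|_{L^2}^2$ into the left-hand side and reduces the theorem to showing that $\|\nabla_{X_1} B(u_\epsilon)\|_{L^2(\mathrm{supp}\,\eta)}$ is uniformly bounded in $\epsilon$. This is the step at which the ``most crucial'' assumption (\ref{5}) is used: $\{u_\epsilon\} \subset H_0^1(\Omega) \subset V$ is $L^2$-bounded, so $\overline{\mathrm{conv}}\{B(u_\epsilon)\} \subset V$, and combined with the $L^r$-bound on $B(u_\epsilon)$ from (\ref{4}), a functional-analytic argument (exploiting that a convex, $L^2$-closed, $L^2$-bounded subset of $L^2$ which lies in the continuously-embedded Banach space $V$ must itself be bounded in $V$) produces the required uniform $V$-estimate. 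I anticipate this step, translating the qualitative inclusion (\ref{5}) into quantitative uniform control of $\|\nabla_{X_1} B(u_\epsilon)\|_{L^2}$, to be the principal obstacle; by contrast, the coefficient manipulations forced by (\ref{34}) are intricate but conceptually routine, and once the uniform bound on $\nabla_{X_1} B(u_\epsilon)$ is in hand the $H^1(\Omega')$ estimate follows immediately by combination with the already-uniform $\nabla_{X_2}$-bound.
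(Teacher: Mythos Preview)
Your route is genuinely different from the paper's. The paper never runs difference quotients on the nonlinear equation. It first quotes the \emph{linear} interior estimate (Theorem~5, from \cite{1}): for each fixed $g\in V$ the solution $v_\epsilon$ of (\ref{7}) satisfies $\|v_\epsilon\|_{H^1(\Omega')}\le C_{\Omega',g}$ uniformly in $\epsilon$. It then applies a Banach--Steinhaus theorem to the family of linear solution operators $\mathcal{A}_\epsilon:g\mapsto v_\epsilon$, regarded as continuous maps from $L^2(\Omega)$ with its weak topology to $H^1_{\mathrm{loc}}(\Omega)$ with its weak topology. The set $G=\overline{\mathrm{conv}}\,B(E)$, with $E$ the $L^2$-ball in $V$ containing all $u_\epsilon$, is convex and weakly compact in $L^2$; each orbit $\{\mathcal{A}_\epsilon(g)\}_\epsilon$ is bounded by Theorem~5, so $\bigcup_\epsilon\mathcal{A}_\epsilon(G)$ lies in a bounded set of $H^1_{\mathrm{loc}}$. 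Since $u_\epsilon=\mathcal{A}_\epsilon(B(u_\epsilon))$ and $B(u_\epsilon)\in G$, the theorem follows. Assumption (\ref{5}) is used only qualitatively, to ensure $G\subset V$ so that Theorem~5 applies to every $g\in G$; no uniform $V$-bound on $G$ is ever extracted.

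Your device for turning the bare inclusion (\ref{5}) into a uniform bound $\sup_\epsilon\|\nabla_{X_1}B(u_\epsilon)\|_{L^2}<\infty$ is in fact correct: a convex, $L^2$-closed, $L^2$-bounded subset of the reflexive space $V$ is automatically $V$-bounded, by a Baire-category argument. The gap in your sketch lies where you judged the work routine. Assumption (\ref{34}) imposes \emph{no} differentiability on the $A_{11}$ block; the entries $a_{ij}$ with $i,j\le p$ are merely $L^\infty$. In the discrete product rule for $D_{h,k}(A_\epsilon\nabla u_\epsilon)$ the commutator $\epsilon^2(D_{h,k}A_{11})\nabla_{X_1}u_\epsilon$ is paired against $\eta^2 D_{h,k}\nabla_{X_1}u_\epsilon$, and the only available control is $\|D_{h,k}A_{11}\|_{L^\infty}\le 2\|A_{11}\|_{L^\infty}/h$. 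After Young's inequality and absorption into the coercive term this leaves a residual of order $\|D_{h,k}A_{11}\|_{L^\infty}^2\sim h^{-2}$, and the prefactor $\epsilon^2$ does not help: $h$ and $\epsilon$ are independent, you must send $h\to0$ for each fixed $\epsilon>0$, and your constant $C_0$ blows up. The linear estimate in \cite{1} on which the paper relies sidesteps this by comparing $v_\epsilon$ directly to the limit $v_0$ with the Chipot--Guesmia test function $\rho(X_1)^2(v_\epsilon-v_0)$ (as in the proof of Proposition~9 here), a manoeuvre that never differentiates $A_{11}$. Your scheme would need either that comparison device or the additional hypothesis $\partial_kA_{11}\in L^\infty$ to close.
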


\begin{theorem}
( The convergence theorem) Assume (\ref{1}), (\ref{3}), (\ref{2}), (\ref{4}%
), (\ref{5}), (\ref{34})$.$ Let $(u_{\epsilon })\subset H_{0}^{1}(\Omega )$
be a sequence of solutions to (\ref{6}) then there exists a subsequence $%
(u_{\epsilon _{k}})$ and $u_{0}\in H_{loc}^{1}(\Omega )\cap L^{2}(\Omega )$
such that : $\nabla _{X_{2}}u_{0}\in L^{2}(\Omega )$ and 
\begin{equation*}
u_{\epsilon _{k}}\rightarrow u_{0},\nabla _{X_{2}}u_{\epsilon
_{k}}\rightarrow \nabla _{X_{2}}u_{0}\text{ in }L^{2}(\Omega )\text{
strongly as }\epsilon _{k}\rightarrow 0
\end{equation*}

and for a.e $X_{1}$ we have $u_{0}(X_{1},.)\in H_{0}^{1}(\omega _{2}),$and 
\begin{eqnarray}
&&\dint\limits_{\omega _{2}}A_{22}\nabla _{X_{2}}u_{0}(X_{1},.)\cdot \nabla
_{X_{2}}\varphi dX_{2}+\beta \dint\limits_{\omega _{2}}u_{0}(X_{1},.)\varphi
dX_{2}  \notag \\
&=&\dint\limits_{\omega _{2}}B(u_{0})(X_{1},.)\varphi dX_{2},\text{ \ }%
\forall \varphi \in \mathcal{D}(\omega _{2})  \label{11}
\end{eqnarray}
\end{theorem}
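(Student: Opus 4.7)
The argument has three stages: uniform estimates and compactness, identification of the limit equations (\ref{10}) and (\ref{11}), and strong convergence of $\nabla_{X_2}u_\epsilon$ via an energy identity. \emph{For the first,} Theorem 1 bounds $\|u_\epsilon\|_{L^r(\Omega)}$ uniformly in $\epsilon$, hence also $\|u_\epsilon\|_{L^2(\Omega)}$ since $\Omega$ is bounded. Testing (\ref{6}) with $\varphi=u_\epsilon$ (valid by density since $u_\epsilon\in H_0^1(\Omega)$) and invoking (\ref{3})--(\ref{4}) then bounds $\|\nabla_{X_2}u_\epsilon\|_{L^2(\Omega)}$ and $\epsilon\|\nabla_{X_1}u_\epsilon\|_{L^2(\Omega)}$, while Theorem 2 delivers an $H^1(\Omega')$-bound on every $\Omega'\subset\subset\Omega$. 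Exhausting $\Omega$ by such open sets and using Rellich--Kondrachov with a diagonal extraction will produce a subsequence $u_{\epsilon_k}\to u_0$ in $L^2_{loc}(\Omega)$ and a.e., with $u_0\in H^1_{loc}(\Omega)\cap L^2(\Omega)$. The uniform $L^r$-bound with $r>2$ forces equi-integrability of $\{u_{\epsilon_k}^2\}$ via H\"older, so Vitali upgrades this to strong $L^2(\Omega)$ convergence. Along a further subsequence $\nabla_{X_2}u_{\epsilon_k}\rightharpoonup\nabla_{X_2}u_0$ in $L^2(\Omega)$; integration by parts against smooth test fields identifies the weak limit of $\epsilon_k\nabla_{X_1}u_{\epsilon_k}$ as $0$; and boundedness of $\{u_\epsilon\}$ in the Hilbert space $L^2(\omega_1;H_0^1(\omega_2))$ places $u_0$ there too, giving $u_0(X_1,\cdot)\in H_0^1(\omega_2)$ for a.e.\ $X_1$.

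\emph{For the second stage,} I pass to the limit in (\ref{6}) with $\varphi\in\mathcal{D}(\Omega)$: the $\epsilon^2 A_{11}$ and $\epsilon A_{12}$ contributions vanish (they pair $O(1)$ factors against $\epsilon_k\nabla\varphi$), while the remaining terms converge by the first stage; this yields (\ref{10}). Specializing $\varphi=\psi(X_1)\zeta(X_2)$ with $\psi\in\mathcal{D}(\omega_1)$, $\zeta\in\mathcal{D}(\omega_2)$, Fubini and arbitrariness of $\psi$ give the slicewise identity for a.e.\ $X_1$; choosing $\zeta$ from a countable dense family in $\mathcal{D}(\omega_2)$ makes the exceptional null set independent of $\zeta$, producing (\ref{11}). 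By density of $\mathcal{D}(\omega_1)\otimes\mathcal{D}(\omega_2)$ in $L^2(\omega_1;H_0^1(\omega_2))$, (\ref{10}) extends to all $\varphi$ in that Hilbert space, in particular to $\varphi=u_0$.

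\emph{For the third stage (and the main obstacle),} set $w_\epsilon:=(\epsilon\nabla_{X_1}u_\epsilon,\nabla_{X_2}u_\epsilon)^T$, giving the pointwise identity $A_\epsilon\nabla u_\epsilon\cdot\nabla u_\epsilon=Aw_\epsilon\cdot w_\epsilon$ and the weak convergence $w_{\epsilon_k}\rightharpoonup w:=(0,\nabla_{X_2}u_0)^T$ in $L^2(\Omega)^N$. Testing (\ref{6}) with $u_\epsilon$ yields
\[
\int_\Omega Aw_\epsilon\cdot w_\epsilon\,dx+\beta\|u_\epsilon\|_{L^2(\Omega)}^2=\int_\Omega B(u_\epsilon)u_\epsilon\,dx,
\]
whose right-hand side tends to $\int_\Omega B(u_0)u_0\,dx$ by (\ref{2}) and the strong $L^2$ convergence; testing the extended (\ref{10}) with $u_0$ rewrites this limit as $\int_\Omega Aw\cdot w\,dx+\beta\|u_0\|_{L^2(\Omega)}^2$, so $\int_\Omega Aw_{\epsilon_k}\cdot w_{\epsilon_k}\to \int_\Omega Aw\cdot w$. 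Since $\xi\mapsto(\int_\Omega A\xi\cdot\xi\,dx)^{1/2}$ is a Hilbert norm on $L^2(\Omega)^N$ equivalent to the usual one, norm convergence combined with weak convergence forces $w_{\epsilon_k}\to w$ strongly in $L^2(\Omega)^N$; in particular $\nabla_{X_2}u_{\epsilon_k}\to\nabla_{X_2}u_0$ in $L^2(\Omega)$. The hard part is precisely this step: a naive passage to the limit in the energy identity cannot discard the cross term $\epsilon\int A_{12}\nabla_{X_1}u_\epsilon\cdot\nabla_{X_2}u_\epsilon$ since both factors are only $O(1)$ in $L^2$; the $w_\epsilon$-repackaging sidesteps this but depends crucially on the admissibility of $u_0$ as a test function in (\ref{10}), which rests on the slicewise $H_0^1$-property and hence on the interior estimate of Theorem 2 (in turn using assumption (\ref{5})).
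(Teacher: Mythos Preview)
Your argument is correct and takes a genuinely different, more economical route than the paper's. The paper never establishes strong $L^2(\Omega)$ convergence of $u_{\epsilon_k}$ directly; instead it builds an elaborate two–parameter approximation scheme (Sections~4--5): a \emph{cut-off problem} with $B(\phi u_\epsilon)$, a further \emph{regularized problem} with $B(\Delta_n(\phi u_\epsilon))$ using the compact resolvent $\Delta_n=(I-n^{-1}\Delta)^{-1}$, uniform-in-$\epsilon$ closeness of these approximants to $u_\epsilon$ (Propositions~5 and~6), strong convergence of each approximant as $\epsilon\to 0$ (Proposition~7, exploiting compactness of $\Delta_n$ to turn weak into strong convergence inside $B$), and finally a passage $n\to\infty$. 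All of this machinery exists precisely because the paper works only with the weak convergence $u_\epsilon\rightharpoonup u_0$ when handling the nonlinear term.

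You bypass this entirely by observing that the interior estimate of Theorem~2 together with Rellich--Kondrachov and a diagonal extraction already gives $u_{\epsilon_k}\to u_0$ in $L^2_{loc}(\Omega)$ and a.e., while the uniform $L^r$ bound with $r>2$ (Proposition~1) provides equi-integrability of $\{u_{\epsilon_k}^2\}$, so Vitali upgrades this to $u_{\epsilon_k}\to u_0$ in $L^2(\Omega)$. Once that is in hand, the local Lipschitz property (\ref{2}) gives $B(u_{\epsilon_k})\to B(u_0)$ strongly, the limit equation follows immediately, and your repackaging $w_\epsilon=(\epsilon\nabla_{X_1}u_\epsilon,\nabla_{X_2}u_\epsilon)$ combined with ``weak convergence $+$ convergence of the $A$-norm $\Rightarrow$ strong convergence'' delivers $\nabla_{X_2}u_{\epsilon_k}\to\nabla_{X_2}u_0$ cleanly. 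Your approach exploits the hypothesis $r>2$ in an essential way (for equi-integrability); the paper's scheme would survive with only an $L^2$ growth bound on $B$, which may explain its heavier design. One small point of exposition: the slicewise $H_0^1$ property of $u_0$ and its admissibility as a test function in (\ref{10}) come from boundedness of $(u_\epsilon)$ in $W=L^2(\omega_1;H_0^1(\omega_2))$ and density of $\mathcal{D}(\omega_1)\otimes\mathcal{D}(\omega_2)$ there, not from Theorem~2; what Theorem~2 really buys you is the a.e.\ convergence feeding Vitali.
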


\begin{corollary}
If problem (\ref{11}) has a unique solution ( in the sense of \textbf{%
theorem\ 3}) then the convergences given in the previous theorem hold for
the whole sequence $(u_{\epsilon }).$
\end{corollary}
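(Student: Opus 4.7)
The plan is to apply the classical Urysohn subsequence principle: a sequence in a metric space converges to $u_0$ if and only if every subsequence admits a further subsequence converging to $u_0$. Since the ambient spaces here are $L^2(\Omega)$ for both $u_\epsilon$ and $\nabla_{X_2}u_\epsilon$, this criterion applies directly.

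First, I would fix $u_0$ as the unique solution to (\ref{11}) in the sense of \textbf{Theorem 3}, meaning $u_0 \in H_{loc}^1(\Omega)\cap L^2(\Omega)$ with $\nabla_{X_2}u_0\in L^2(\Omega)$, $u_0(X_1,\cdot)\in H_0^1(\omega_2)$ for a.e.\ $X_1$, and (\ref{11}) holding. Next, suppose for contradiction that the full sequence $(u_\epsilon)$ does not converge to $u_0$ in $L^2(\Omega)$. Then there exist $\delta>0$ and a subsequence $(u_{\epsilon_k})$ such that $\Vert u_{\epsilon_k}-u_0\Vert_{L^2(\Omega)}\geq \delta$ for all $k$. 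Apply \textbf{Theorem 3} to this subsequence: since each $u_{\epsilon_k}$ is a solution of (\ref{6}) at level $\epsilon_k\to 0$, the hypotheses of \textbf{Theorem 3} are satisfied, and we may extract a further subsequence $(u_{\epsilon_{k_j}})$ converging strongly in $L^2(\Omega)$ to some $\tilde u_0$ that itself solves (\ref{11}) in the sense of \textbf{Theorem 3}. By the assumed uniqueness, $\tilde u_0 = u_0$, contradicting $\Vert u_{\epsilon_{k_j}}-u_0\Vert_{L^2(\Omega)}\geq \delta$.

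The very same argument, run in parallel on the gradient component, yields $\nabla_{X_2}u_\epsilon\to \nabla_{X_2}u_0$ strongly in $L^2(\Omega)$ along the whole sequence: if a subsequence stays at distance $\geq\delta$ from $\nabla_{X_2}u_0$, \textbf{Theorem 3} extracts a further subsequence along which both $u_{\epsilon_{k_j}}\to \tilde u_0$ and $\nabla_{X_2}u_{\epsilon_{k_j}}\to \nabla_{X_2}\tilde u_0$ in $L^2(\Omega)$, and uniqueness forces $\tilde u_0=u_0$, a contradiction.

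There is essentially no analytic obstacle; the entire argument is formal once \textbf{Theorem 3} is in hand. The only point worth verifying carefully is that the class in which uniqueness is assumed coincides exactly with the class in which \textbf{Theorem 3} produces its limit (i.e.\ $H^1_{loc}\cap L^2$ with $\nabla_{X_2}u_0\in L^2$, correct boundary behavior on a.e.\ $X_1$-slice, and the weak formulation (\ref{11})), so that the limit $\tilde u_0$ of any subsequence is admissible as a competitor for uniqueness and can be identified with $u_0$.
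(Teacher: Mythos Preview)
Your proof is correct and follows essentially the same approach as the paper: a contradiction argument via the subsequence principle, applying \textbf{Theorem 3} to any offending subsequence and invoking uniqueness to identify the limit with $u_0$. The only cosmetic difference is that the paper handles the $L^2$ and $\nabla_{X_2}$ convergences together via a single ``or'' clause, while you treat them in parallel.
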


\begin{proof}
The proof is direct, let $(u_{\epsilon })$ be a sequence of solutions to (%
\ref{6}) and suppose that $u_{\epsilon }$ does not converge to $u_{0}$ (as $%
\epsilon \rightarrow 0$) then there exists a subsequence $(u_{\epsilon _{k}})
$ and $\delta >0$ such that $\forall \epsilon _{k},$ $\left\Vert u_{\epsilon
_{k}}-u_{0}\right\Vert _{L^{2}(\Omega )}>$ $\delta $ or $\left\Vert \nabla
_{X_{2}}(u_{\epsilon _{k}}-u_{0})\right\Vert _{L^{2}(\Omega )}>$ $\delta $.
By \textbf{theorem 3} one can extract a subsequence of $(u_{\epsilon _{k}})$
which converges to some $u_{1}$ in the sense of \textbf{theorem 3}, assume
that (\ref{11}) has a unique solution then $u_{1}=u_{0}.$and this
contradicts the previous inequalities.
\end{proof}

In the case of non-uniqueness we can reformulate the convergences ,given in
the previous theorem, using $\epsilon -nets$ like in \cite{7}. Let us recall
the definition of $\epsilon -nets$ (\cite{7})

\begin{definition}
Let $(X,d)$ be metric space, $Y,Y^{\prime }$ two subsets of \ $X,$ then we
say that $Y$ is an $\epsilon -net$ of $Y^{\prime },$ if \ for every $x\in
Y^{\prime }$ there exists an $a\in Y$ such that%
\begin{equation*}
d(x,a)<\epsilon
\end{equation*}
\end{definition}

We define the following space introduced in \cite{7} 
\begin{equation*}
W=\left\{ u\in L^{2}(\Omega ):\nabla _{X_{2}}u\in L^{2}(\Omega ),\text{ and
for a.e }X_{1}\text{, }u(X_{1},.)\in H_{0}^{1}(\omega _{2})\text{ }\right\} ,
\end{equation*}

equipped with the Hilbertian norm (see \cite{7})%
\begin{equation*}
\left\Vert u\right\Vert _{W}^{2}=\left\Vert u\right\Vert _{L^{2}(\Omega
)}^{2}+\left\Vert \nabla _{X_{2}}u\right\Vert _{L^{2}(\Omega )}^{2}
\end{equation*}

Now we can give \textbf{Theorem 3 }in the following form

\begin{theorem}
Under assumptions of theorem 3 then $\Xi $ ,the set of solutions of (\ref{11}%
) in $W,$ is non empty and we have $\Xi \cap H_{loc}^{1}(\Omega )\neq
\varnothing $, and moreover for every $\eta >0,$ there exists $\epsilon
_{0}>0$ such that $\Xi $ is an $\eta -net$ of $\Xi _{\epsilon _{0}}$ in $W$
where%
\begin{equation*}
\Xi _{\epsilon _{0}}=\left\{ u_{\epsilon }\text{ solution to (\ref{6}) for }%
0<\epsilon <\epsilon _{0}\right\}
\end{equation*}
\end{theorem}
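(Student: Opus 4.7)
The plan is to derive Theorem 4 from Theorem 3 by a direct compactness--contradiction argument; no new PDE estimates are required beyond those already established.

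First I would verify that $\Xi\cap H^{1}_{loc}(\Omega)$ is nonempty. By Theorem 1, for every $\epsilon\in(0,1]$ problem (\ref{6}) admits at least one solution $u_{\epsilon}\in H^{1}_{0}(\Omega)$, so for any sequence $\epsilon_{k}\to 0$ the family $(u_{\epsilon_{k}})$ is well defined. Applying Theorem 3 to this sequence yields, after extraction of a subsequence, a limit $u_{0}\in H^{1}_{loc}(\Omega)\cap L^{2}(\Omega)$ with $\nabla_{X_{2}}u_{0}\in L^{2}(\Omega)$, $u_{0}(X_{1},\cdot)\in H^{1}_{0}(\omega_{2})$ for a.e.\ $X_{1}$, and $u_{0}$ satisfying (\ref{11}). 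In particular $u_{0}\in W$, so $u_{0}\in\Xi\cap H^{1}_{loc}(\Omega)$, which gives the first two assertions.

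For the $\eta$-net claim I would argue by contradiction. Assume that, for some $\eta>0$, no threshold $\epsilon_{0}$ works. Then there exist a sequence $\epsilon_{k}\to 0$ and solutions $u_{\epsilon_{k}}$ of (\ref{6}) with
\[
\inf_{v\in\Xi}\|u_{\epsilon_{k}}-v\|_{W}\geq\eta\qquad\text{for every }k.
\]
Applying Theorem 3 to $(u_{\epsilon_{k}})$ and extracting a subsequence produces some $u_{0}$ satisfying (\ref{11}), with $u_{\epsilon_{k}}\to u_{0}$ and $\nabla_{X_{2}}u_{\epsilon_{k}}\to\nabla_{X_{2}}u_{0}$ strongly in $L^{2}(\Omega)$. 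Since
\[
\|u_{\epsilon_{k}}-u_{0}\|_{W}^{2}=\|u_{\epsilon_{k}}-u_{0}\|_{L^{2}(\Omega)}^{2}+\|\nabla_{X_{2}}(u_{\epsilon_{k}}-u_{0})\|_{L^{2}(\Omega)}^{2},
\]
this forces $\|u_{\epsilon_{k}}-u_{0}\|_{W}\to 0$; because $u_{0}\in\Xi$, this contradicts the standing lower bound.

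There is no substantial obstacle in this plan: the only point worth flagging is that the Hilbertian norm on $W$ is tailored exactly to what Theorem 3 delivers, namely the simultaneous $L^{2}$-strong convergence of $u_{\epsilon_{k}}$ and of $\nabla_{X_{2}}u_{\epsilon_{k}}$. Consequently the $W$-convergence extracted from Theorem 3 is precisely what is needed to place each $u_{\epsilon_{k}}$ (for $\epsilon_{k}$ small) within distance $\eta$ of some element of $\Xi$, and the argument is complete once this identification is spelled out.
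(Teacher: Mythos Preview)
Your proposal is correct and follows essentially the same route as the paper's proof: invoke Theorems 1 and 3 to obtain $\Xi\cap H^{1}_{loc}(\Omega)\neq\varnothing$, then run the compactness--contradiction argument, extracting via Theorem 3 a subsequence converging in $W$ to some $u_{0}\in\Xi$, which contradicts the assumed lower bound. Your write-up is in fact a bit more explicit than the paper's in justifying that the convergences delivered by Theorem 3 amount exactly to convergence in the $W$-norm.
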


\begin{proof}
\textbf{Theorem 1} and\textbf{\ 3} ensure that $\Xi \cap H_{loc}^{1}(\Omega
)\neq \varnothing .$ For the $\eta -net$ convergence, let us reasoning by
contradiction, then there exists $\eta >0$ and a sequence $\epsilon
_{k}\rightarrow 0$ such that $\Xi $ is not an $\eta -net$ of $\Xi _{\epsilon
_{k}}$ in $W$ for every $k$ ( remark that $\Xi _{\epsilon _{k}}\neq
\varnothing $ by \textbf{Theorem 1)} in other words there exists a sequence $%
(u_{\epsilon _{k}^{\prime }})$ with $\epsilon _{k}^{\prime }\rightarrow 0$
such that for every $u_{0}\in \Xi $ we have $\left\Vert u_{\epsilon
_{k}^{\prime }}-u_{0}\right\Vert _{W}\geq \eta $, according to theorem 3
there exists a subsequence of $(u_{\epsilon _{k}^{\prime }})$ which
converges to some $u_{0}\in \Xi $ in$W$ and this contradicts the previous
inequality.
\end{proof}

\section{Existence and $L^{r}-regularity$\ for the solutions and weak
convergences}

\subsection{\textbf{Existence and }$L^{r}-regularity$}

In this subsection we prove \textbf{Theorem 1, }we start by the following
result on the $L^{r}$- regularity for the solutions

\begin{proposition}
Assume (\ref{1}), (\ref{3}), (\ref{4} ) then if $u_{\epsilon }\in
H_{0}^{1}(\Omega )$ is a solution to (\ref{6}) then $u_{\epsilon }\in
L^{r}(\Omega )$ and $\left\Vert u_{\epsilon }\right\Vert _{L^{r}(\Omega
)}\leq \frac{M}{\beta -M\left\vert \Omega \right\vert ^{\frac{1}{2}-\frac{1}{%
r}}}$ for every $\epsilon >0$
\end{proposition}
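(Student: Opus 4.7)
The plan is a standard $L^r$ bootstrap via testing the weak formulation with a power of $u_\epsilon$. Since $u_\epsilon \in H^1_0(\Omega)$ only, I cannot directly insert $|u_\epsilon|^{r-2}u_\epsilon$, so I will regularize by truncation. For each integer $k\ge 1$, let $T_k$ denote the usual truncation at level $k$ and set
\begin{equation*}
\varphi_k = T_k(u_\epsilon)\,|T_k(u_\epsilon)|^{r-2}.
\end{equation*}
Since $T_k(u_\epsilon)\in H^1_0(\Omega)\cap L^\infty(\Omega)$ and $\psi(s)=s|s|^{r-2}$ is $C^1$ for $r>2$ (with $\psi'(s)=(r-1)|s|^{r-2}$), the chain rule yields $\varphi_k\in H^1_0(\Omega)\cap L^\infty(\Omega)$, hence $\varphi_k$ is an admissible test function by density of $\mathcal{D}(\Omega)$ in $H^1_0(\Omega)$ (all three integrals in (\ref{6}) make sense).

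Inserting $\varphi_k$ into (\ref{6}) gives three terms. The diffusion term becomes
\begin{equation*}
(r-1)\int_\Omega |T_k(u_\epsilon)|^{r-2}\,\chi_{\{|u_\epsilon|\le k\}}\, A_\epsilon\nabla u_\epsilon\cdot\nabla u_\epsilon\,dx\ge 0
\end{equation*}
by the ellipticity (\ref{3}), so it can be dropped. For the zero-order term, a pointwise check shows $u_\epsilon\varphi_k\ge |T_k(u_\epsilon)|^{r}$ almost everywhere (equality where $|u_\epsilon|\le k$, strict inequality where $|u_\epsilon|>k$), giving the lower bound $\beta\|T_k(u_\epsilon)\|_{L^r(\Omega)}^{r}$. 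For the right-hand side, Hölder with conjugate exponents $r$ and $r/(r-1)$ together with $\|\varphi_k\|_{L^{r/(r-1)}(\Omega)}=\|T_k(u_\epsilon)\|_{L^r(\Omega)}^{r-1}$ yields
\begin{equation*}
\int_\Omega B(u_\epsilon)\varphi_k\,dx\le \|B(u_\epsilon)\|_{L^r(\Omega)}\,\|T_k(u_\epsilon)\|_{L^r(\Omega)}^{r-1}.
\end{equation*}
Combining these and dividing by $\|T_k(u_\epsilon)\|_{L^r(\Omega)}^{r-1}$ (trivial if it vanishes) produces
\begin{equation*}
\beta\,\|T_k(u_\epsilon)\|_{L^r(\Omega)}\le \|B(u_\epsilon)\|_{L^r(\Omega)}.
\end{equation*}

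To finish, note that (\ref{4}) gives $\|B(u_\epsilon)\|_{L^r(\Omega)}\le M(1+\|u_\epsilon\|_{L^2(\Omega)})$, a finite quantity since $u_\epsilon\in H^1_0(\Omega)$. Letting $k\to\infty$, monotone convergence applied to $|T_k(u_\epsilon)|^r\uparrow |u_\epsilon|^r$ shows $u_\epsilon\in L^r(\Omega)$ with
\begin{equation*}
\beta\,\|u_\epsilon\|_{L^r(\Omega)}\le M\bigl(1+\|u_\epsilon\|_{L^2(\Omega)}\bigr).
\end{equation*}
Now Hölder's inequality in the reverse direction, $\|u_\epsilon\|_{L^2(\Omega)}\le |\Omega|^{\frac12-\frac1r}\|u_\epsilon\|_{L^r(\Omega)}$, allows the $L^2$-norm on the right to be absorbed into the left; the assumption $\beta>M|\Omega|^{1/2-1/r}$ is precisely what makes the resulting coefficient positive, and rearranging gives the stated bound.

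The only real technical care is the admissibility of $\varphi_k$ and the identification of the sign of $\int u_\epsilon \varphi_k\,dx$ on $\{|u_\epsilon|>k\}$; once those are in place, everything is algebra and monotone convergence. The reason for truncating rather than directly using $|u_\epsilon|^{r-2}u_\epsilon$ is that we do not yet know $u_\epsilon\in L^r(\Omega)$, so the gradient of the unregularized test function is not a priori controlled.
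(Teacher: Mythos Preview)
Your argument is correct, but it follows a different route from the paper. The paper proceeds by \emph{duality}: for arbitrary $g\in\mathcal{D}(\Omega)$ it solves the linear adjoint problem $-\mathrm{div}(A_\epsilon\nabla w_\epsilon)+\beta w_\epsilon=g$, uses the symmetry of $A_\epsilon$ to write $\int_\Omega u_\epsilon g\,dx=\int_\Omega B(u_\epsilon)w_\epsilon\,dx$, and then bounds $\|w_\epsilon\|_{L^s}$ (with $s=r'$) by testing the adjoint equation with a regularized power $\rho(w_\epsilon)=w_\epsilon(w_\epsilon^2+\delta)^{(s-2)/2}$; duality then yields $\|u_\epsilon\|_{L^r}\le\frac{M}{\beta}(1+\|u_\epsilon\|_{L^2})$, after which the closing step via H\"older is identical to yours. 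Your approach bypasses the auxiliary problem entirely by testing (\ref{6}) directly with the truncated power $T_k(u_\epsilon)|T_k(u_\epsilon)|^{r-2}$, dropping the nonnegative diffusion term, and passing to the limit in $k$. This is more elementary and, notably, does not rely on the symmetry of $A$, whereas the paper's duality step does; on the other hand, the paper's method isolates a reusable linear estimate ($\beta\|w_\epsilon\|_{L^s}\le\|g\|_{L^s}$) that is of independent interest for the resolvent of the operator.
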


\begin{proof}
We will proceed as in \cite{3}. Let $u_{\epsilon }\in H_{0}^{1}(\Omega )$ be
a solution to (\ref{6}), given $g\in \mathcal{D}(\Omega )$ and let $%
w_{\epsilon }\in H_{0}^{1}(\Omega )$ be the unique solution to the linear
problem 
\begin{equation}
\dint\limits_{\Omega }A_{\epsilon }\nabla w_{\epsilon }\cdot \nabla \varphi
dx+\beta \dint\limits_{\Omega }w_{\epsilon }\varphi dx\text{ }%
=\dint\limits_{\Omega }g\varphi dx\text{, \ }\forall \varphi \in \mathcal{D}%
(\Omega ),  \label{7}
\end{equation}

the existence of $w_{\epsilon }$ follows by the Lax-Milgram theorem (thanks
to assumptions (\ref{1}), (\ref{3})).

Take $u_{\epsilon }$ as a test function and using the symmetry of $%
A_{\epsilon }$ we get 
\begin{eqnarray*}
\dint\limits_{\Omega }u_{\epsilon }gdx &=&\dint\limits_{\Omega }A_{\epsilon
}\nabla w_{\epsilon }\cdot \nabla u_{\epsilon }dx+\beta \dint\limits_{\Omega
}w_{\epsilon }u_{\epsilon }dx \\
&=&\dint\limits_{\Omega }A_{\epsilon }\nabla u_{\epsilon }\cdot \nabla
w_{\epsilon }dx+\beta \dint\limits_{\Omega }w_{\epsilon }u_{\epsilon }dx \\
&=&\dint\limits_{\Omega }B(u_{\epsilon })w_{\epsilon }dx.
\end{eqnarray*}

Given $s$ such that $\frac{1}{r}+\frac{1}{s}=1$, then by (\ref{4}) we obtain 
\begin{equation}
\left\vert \dint\limits_{\Omega }u_{\epsilon }gdx\right\vert \leq
M(1+\left\Vert u_{\epsilon }\right\Vert _{L^{2}(\Omega )})\left\Vert
w_{\epsilon }\right\Vert _{L^{s}(\Omega )}  \label{8}
\end{equation}

Now we have to estimate $\left\Vert w_{\epsilon }\right\Vert _{L^{s}(\Omega
)}.$ Let $\rho \in C^{1}(%
%TCIMACRO{\U{211d} }%
%BeginExpansion
\mathbb{R}
%EndExpansion
,%
%TCIMACRO{\U{211d} }%
%BeginExpansion
\mathbb{R}
%EndExpansion
)$, such that $\rho (0)=0$ and $\rho ^{\prime }\geq 0$ and $\rho ^{\prime
}\in L^{\infty }$ then $\rho (w_{\epsilon })\in H_{0}^{1}(\Omega ),$ take $%
\rho (w_{\epsilon })$ as a test function in (\ref{7}) we get%
\begin{equation*}
\dint\limits_{\Omega }\rho ^{\prime }(w_{\epsilon })A_{\epsilon }\nabla
w_{\epsilon }\cdot \nabla w_{\epsilon }dx+\beta \dint\limits_{\Omega
}w_{\epsilon }\rho (w_{\epsilon })dx\text{ }=\dint\limits_{\Omega }g\rho
(w_{\epsilon })dx\text{.}
\end{equation*}

Now, using ellipticity assumption (\ref{3}) we derive%
\begin{equation*}
\lambda \left( \dint\limits_{\Omega }\rho ^{\prime }(w_{\epsilon
})\left\vert \epsilon \nabla _{X_{1}}w_{\epsilon }\right\vert 
%TCIMACRO{\U{b2}}%
%BeginExpansion
{{}^2}%
%EndExpansion
dx+\dint\limits_{\Omega }\rho ^{\prime }(w_{\epsilon })\left\vert \nabla
_{X_{2}}w_{\epsilon }\right\vert 
%TCIMACRO{\U{b2}}%
%BeginExpansion
{{}^2}%
%EndExpansion
dx\right) +\beta \dint\limits_{\Omega }w_{\epsilon }\rho (w_{\epsilon })dx%
\text{ }\leq \dint\limits_{\Omega }g\rho (w_{\epsilon })dx
\end{equation*}

Thus 
\begin{equation*}
\beta \dint\limits_{\Omega }w_{\epsilon }\rho (w_{\epsilon })dx\text{ }\leq
\dint\limits_{\Omega }g\rho (w_{\epsilon })dx
\end{equation*}

Assume that $\forall x\in 
%TCIMACRO{\U{211d} }%
%BeginExpansion
\mathbb{R}
%EndExpansion
:$ $\left\vert \rho (x)\right\vert \leq \left\vert x\right\vert ^{\frac{1}{%
r-1}},$ so that $\left\vert \rho (x)\right\vert ^{r}\leq \left\vert
x\right\vert \left\vert \rho (x)\right\vert =x\rho (x)$ then, we obtain%
\begin{eqnarray*}
\beta \dint\limits_{\Omega }w_{\epsilon }\rho (w_{\epsilon })dx\text{ }
&\leq &\left\Vert g\right\Vert _{L^{s}(\Omega )}\left( \dint\limits_{\Omega
}\left\vert \rho (w_{\epsilon })\right\vert ^{r}dx\right) ^{\frac{1}{r}} \\
&\leq &\left\Vert g\right\Vert _{L^{s}(\Omega )}\left( \dint\limits_{\Omega
}w_{\epsilon }\rho (w_{\epsilon })dx\text{ }\right) ^{\frac{1}{r}},
\end{eqnarray*}

then 
\begin{equation*}
\beta \left( \dint\limits_{\Omega }w_{\epsilon }\rho (w_{\epsilon })\right)
^{\frac{1}{s}}\leq \left\Vert g\right\Vert _{L^{s}(\Omega )}
\end{equation*}

Now, for $\delta >0$ taking $\rho (x)=x(x^{2}+\delta )^{\frac{s-2}{2}}$ we
show easily that $\rho $ satisfies the above assumptions, so we obtain 
\begin{equation*}
\beta \left( \dint\limits_{\Omega }w_{\epsilon }^{2}(w_{\epsilon
}^{2}+\delta )^{\frac{s-2}{2}}\right) ^{\frac{1}{s}}\leq \left\Vert
g\right\Vert _{L^{s}(\Omega )},
\end{equation*}

let $\delta \rightarrow 0$ by Fatou's lemma we get 
\begin{equation*}
\beta \left\Vert w_{\epsilon }\right\Vert _{L^{s}(\Omega )}\leq \left\Vert
g\right\Vert _{L^{s}(\Omega )}
\end{equation*}

Finally by (\ref{8}) we get 
\begin{equation}
\left\vert \dint\limits_{\Omega }u_{\epsilon }gdx\right\vert \leq \frac{%
M(1+\left\Vert u_{\epsilon }\right\Vert _{L^{2}(\Omega )})}{\beta }%
\left\Vert g\right\Vert _{L^{s}(\Omega )}  \notag
\end{equation}

By density we can take $g\in L^{s}(\Omega )$ and therefore by duality we get 
\begin{equation*}
\left\Vert u_{\epsilon }\right\Vert _{L^{r}(\Omega )}\leq \frac{%
M(1+\left\Vert u_{\epsilon }\right\Vert _{L^{2}(\Omega )})}{\beta },
\end{equation*}

hence by Holder's inequality we obtain%
\begin{equation*}
\left\Vert u_{\epsilon }\right\Vert _{L^{r}(\Omega )}\leq \frac{M}{\beta }+%
\frac{M\left\vert \Omega \right\vert ^{\frac{1}{2}-\frac{1}{r}}}{\beta }%
\left\Vert u_{\epsilon }\right\Vert _{L^{r}(\Omega )},
\end{equation*}

then%
\begin{equation*}
\left\Vert u_{\epsilon }\right\Vert _{L^{r}(\Omega )}\leq \frac{M}{\beta
-M\left\vert \Omega \right\vert ^{\frac{1}{2}-\frac{1}{r}}}
\end{equation*}
\end{proof}

Now, it remains to prove the existence of $u_{\epsilon }$\textbf{, }the
proof is based on the Schauder fixed point theorem. Let $v\in L^{2}(\Omega )$
and $v_{\epsilon }\in H_{0}^{1}(\Omega )$ be the unique solution to the
linearized problem 
\begin{equation}
\dint\limits_{\Omega }A_{\epsilon }\nabla v_{\epsilon }\cdot \nabla \varphi
dx+\beta \dint\limits_{\Omega }v_{\epsilon }\varphi dx\text{ }%
=\dint\limits_{\Omega }B(v)\varphi dx\text{, \ }\forall \varphi \in \mathcal{%
D}(\Omega )  \label{9}
\end{equation}

The existence of $v_{\epsilon }$ follows by the Lax-Milgram theorem ( thanks
to assumptions (\ref{1}), (\ref{3})). Let $\Gamma :L^{2}(\Omega )\rightarrow
L^{2}(\Omega )$ be the mapping defined by $\Gamma (v)=v_{\epsilon }$.We
prove that $\Gamma $ is continuous, fix $v\in L^{2}(\Omega )$ and let $%
v_{n}\rightarrow v$ in $L^{2}(\Omega )$, we note $v_{\epsilon }^{n}=\Gamma
(v_{n})$ then we have%
\begin{equation*}
\dint\limits_{\Omega }A_{\epsilon }\nabla (v_{\epsilon }^{n}-v_{\epsilon
})\cdot \nabla \varphi dx+\beta \dint\limits_{\Omega }(v_{\epsilon
}^{n}-v_{\epsilon })\varphi dx\text{ }=\dint\limits_{\Omega }\left(
B(v_{n})-B(v)\right) \varphi dx\text{, \ }\forall \varphi \in \mathcal{D}%
(\Omega )\text{ }
\end{equation*}

Take $(v_{\epsilon }^{n}-v_{\epsilon })$ as a test function, estimating
using ellipticity assumption (\ref{3}) and Holder's inequality we get%
\begin{equation*}
\beta \left\Vert v_{\epsilon }^{n}-v_{\epsilon }\right\Vert _{L^{2}(\Omega
)}\leq \left\Vert B(v_{n})-B(v)\right\Vert _{L^{2}(\Omega )}.
\end{equation*}

Passing to the limit as $n\rightarrow \infty $ and assume that $B$ is
continuous,\ then the continuity of $\Gamma $ follows. Now, we define the set

\begin{equation*}
S=\left\{ v\in H_{0}^{1}(\Omega ):\left\Vert \nabla v\right\Vert
_{L^{2}(\Omega )}\leq \frac{\sqrt{\beta }}{\epsilon \sqrt{2\lambda }}\left( 
\frac{M\left\vert \Omega \right\vert ^{\frac{1}{2}-\frac{1}{r}}}{\beta
-M\left\vert \Omega \right\vert ^{\frac{1}{2}-\frac{1}{r}}}\right) \text{
and }\left\Vert v\right\Vert _{L^{2}(\Omega )}\leq \frac{M\left\vert \Omega
\right\vert ^{\frac{1}{2}-\frac{1}{r}}}{\beta -M\left\vert \Omega
\right\vert ^{\frac{1}{2}-\frac{1}{r}}}\right\}
\end{equation*}%
It is clear that $S$ is a convex bounded set in $H_{0}^{1}(\Omega )$ and it
is closed in $L^{2}(\Omega )$, then $S$ is compact in $L^{2}(\Omega )$
(thanks to the compact Sobolev embedding\ $H_{0}^{1}(\Omega )\hookrightarrow
L^{2}(\Omega )$). Let us check that $S$ is stable by $\Gamma .$ For $v\in S,$
taking $\varphi =v_{\epsilon }$ in (\ref{9}) and estimating using
ellipticity assumption (\ref{3}) and H\"{o}lder's inequality we get 
\begin{equation*}
\lambda \epsilon ^{2}\left\Vert \nabla v_{\epsilon }\right\Vert
_{L^{2}(\Omega )}^{2}+\beta \left\Vert v_{\epsilon }\right\Vert
_{L^{2}(\Omega )}^{2}\leq \left\Vert B(v)\right\Vert _{L^{2}(\Omega
)}\left\Vert v_{\epsilon }\right\Vert _{L^{2}(\Omega )},
\end{equation*}

then by Young's inequality we derive

\begin{equation*}
\lambda \epsilon ^{2}\left\Vert \nabla v_{\epsilon }\right\Vert
_{L^{2}(\Omega )}^{2}+\beta \left\Vert v_{\epsilon }\right\Vert
_{L^{2}(\Omega )}^{2}\leq \frac{1}{2\beta }\left\Vert B(v)\right\Vert
_{L^{2}(\Omega )}^{2}+\frac{\beta }{2}\left\Vert v_{\epsilon }\right\Vert
_{L^{2}(\Omega )}^{2},
\end{equation*}%
and (\ref{4}) gives%
\begin{eqnarray*}
\lambda \epsilon ^{2}\left\Vert \nabla v_{\epsilon }\right\Vert
_{L^{2}(\Omega )}^{2}+\frac{\beta }{2}\left\Vert v_{\epsilon }\right\Vert
_{L^{2}(\Omega )}^{2} &\leq &\frac{\left\vert \Omega \right\vert ^{1-\frac{2%
}{r}}(M+M\left\Vert v\right\Vert _{L^{2}(\Omega )})^{2}}{2\beta } \\
&\leq &\frac{\left\vert \Omega \right\vert ^{1-\frac{2}{r}}\left( M+\frac{%
M^{2}\left\vert \Omega \right\vert ^{\frac{1}{2}-\frac{1}{r}}}{\beta
-M\left\vert \Omega \right\vert ^{\frac{1}{2}-\frac{1}{r}}}\right) ^{2}}{%
2\beta }\leq \frac{\beta }{2}\left( \frac{M\left\vert \Omega \right\vert ^{%
\frac{1}{2}-\frac{1}{r}}}{\beta -M\left\vert \Omega \right\vert ^{\frac{1}{2}%
-\frac{1}{r}}}\right) ^{2},
\end{eqnarray*}

hence 
\begin{equation*}
\left\{ 
\begin{array}{c}
\left\Vert v_{\epsilon }\right\Vert _{L^{2}(\Omega )}\leq \frac{M\left\vert
\Omega \right\vert ^{\frac{1}{2}-\frac{1}{r}}}{\beta -M\left\vert \Omega
\right\vert ^{\frac{1}{2}-\frac{1}{r}}} \\ 
\left\Vert \nabla v_{\epsilon }\right\Vert _{L^{2}(\Omega )}\leq \frac{\sqrt{%
\beta }}{\epsilon \sqrt{2\lambda }}\left( \frac{M\left\vert \Omega
\right\vert ^{\frac{1}{2}-\frac{1}{r}}}{\beta -M\left\vert \Omega
\right\vert ^{\frac{1}{2}-\frac{1}{r}}}\right)%
\end{array}%
\right.
\end{equation*}

And therefore $v_{\epsilon }=\Gamma (v)\in S.$Whence, there exists at least
a fixed point $u_{\epsilon }\in S$ for $\Gamma ,$ in other words $%
u_{\epsilon }$ is a solution to (\ref{6}).

\subsection{\textbf{Weak convergences as }$\protect\epsilon \rightarrow 0$}

Throughout this article we use the notations $\rightharpoonup $ , $%
\rightarrow $ for weak and strong convergences of sequences respectively.
Assume (\ref{1}), (\ref{3}), (\ref{4} ) and let $(u_{\epsilon })$ be a
sequence of solutions to (\ref{6})\textbf{, }$.$ We begin by a simple
analysis of the problem, considering problem (\ref{6}) and taking $\varphi
=u_{\epsilon }\in H_{0}^{1}(\Omega )$, by ellipticity assumption (\ref{3})
we get 
\begin{equation*}
\lambda \left( \dint\limits_{\Omega }\left\vert \epsilon \nabla
_{X_{1}}u_{\epsilon }\right\vert 
%TCIMACRO{\U{b2}}%
%BeginExpansion
{{}^2}%
%EndExpansion
dx+\dint\limits_{\Omega }\left\vert \nabla _{X_{2}}u_{\epsilon }\right\vert 
%TCIMACRO{\U{b2}}%
%BeginExpansion
{{}^2}%
%EndExpansion
dx\right) +\beta \dint\limits_{\Omega }u_{\epsilon }^{2}dx\leq
\dint\limits_{\Omega }B(u_{\epsilon })u_{\epsilon }dx\text{ ,\ \ }
\end{equation*}%
and H\"{o}lder's inequality gives 
\begin{equation*}
\lambda \epsilon ^{2}\left\Vert \nabla _{X_{1}}u_{\epsilon }\right\Vert
_{L^{2}(\Omega )}^{2}+\lambda \left\Vert \nabla _{X_{2}}u_{\epsilon
}\right\Vert _{L^{2}(\Omega )}^{2}+\beta \left\Vert u_{\epsilon }\right\Vert
_{L^{2}(\Omega )}^{2}\leq \left\Vert B(u_{\epsilon })\right\Vert
_{L^{2}(\Omega )}\left\Vert u_{\epsilon }\right\Vert _{L^{2}(\Omega )},
\end{equation*}%
and therefore (\ref{4}) and \textbf{Proposition 1 }give\textbf{\ }%
\begin{equation*}
\lambda \epsilon ^{2}\left\Vert \nabla _{X_{1}}u_{\epsilon }\right\Vert
_{L^{2}(\Omega )}^{2}+\lambda \left\Vert \nabla _{X_{2}}u_{\epsilon
}\right\Vert _{L^{2}(\Omega )}^{2}+\beta \left\Vert u_{\epsilon }\right\Vert
_{L^{2}(\Omega )}^{2}\leq \frac{M^{2}\left\vert \Omega \right\vert ^{1-\frac{%
2}{r}}}{\beta -M\left\vert \Omega \right\vert ^{\frac{1}{2}-\frac{1}{r}}}%
\left( 1+\frac{M\left\vert \Omega \right\vert ^{\frac{1}{2}-\frac{1}{r}}}{%
\beta -M\left\vert \Omega \right\vert ^{\frac{1}{2}-\frac{1}{r}}}\right) 
\end{equation*}%
Whence%
\begin{equation}
\left\{ 
\begin{array}{c}
\left\Vert \epsilon \nabla _{X_{1}}u_{\epsilon }\right\Vert _{L^{2}(\Omega
)}\leq \frac{C}{\sqrt{\lambda }} \\ 
\left\Vert \nabla _{X_{2}}u_{\epsilon }\right\Vert _{L^{2}(\Omega )}\leq 
\frac{C}{\sqrt{\lambda }} \\ 
\left\Vert u_{\epsilon }\right\Vert _{L^{2}(\Omega )}\leq \frac{M\left\vert
\Omega \right\vert ^{\frac{1}{2}-\frac{1}{r}}}{\beta -M\left\vert \Omega
\right\vert ^{\frac{1}{2}-\frac{1}{r}}}%
\end{array}%
\right.   \label{26}
\end{equation}%
$\ \ $,where $C^{2}=\frac{M^{2}\left\vert \Omega \right\vert ^{1-\frac{2}{r}}%
}{\beta -M\left\vert \Omega \right\vert ^{\frac{1}{2}-\frac{1}{r}}}\left( 1+%
\frac{M\left\vert \Omega \right\vert ^{\frac{1}{2}-\frac{1}{r}}}{\beta
-M\left\vert \Omega \right\vert ^{\frac{1}{2}-\frac{1}{r}}}\right) $. Remark
that the gradient of $u_{\epsilon }$ is not bounded uniformly in $%
H_{0}^{1}(\Omega )$ so we cannot obtain strong convergence (using Sobolev
embedding for example) in $L^{2}(\Omega )$, however there exists a
subsequence $(u_{\epsilon _{k}})$ and $u_{0}\in L^{2}(\Omega )$ such that: $%
u_{\epsilon _{k}}\rightharpoonup u_{0}$ , $\nabla _{X_{2}}u_{\epsilon
_{k}}\rightharpoonup \nabla _{X_{2}}u_{0}$ and $\epsilon _{k}\nabla
_{X_{1}}u_{\epsilon _{k}}\rightharpoonup 0$ \ weakly in $L^{2}(\Omega )$ (we
used weak compacity in $L^{2}(\Omega )$, and the continuity of the operator
of derivation on $\mathcal{D}^{\prime }(\Omega )$). The function $u_{0}$
constructed before represents a good candidate for solution to the limit
problems (\ref{10}),(\ref{11}).

\begin{corollary}
We have $u_{0}\in L^{r}(\Omega ).$
\end{corollary}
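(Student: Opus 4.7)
The plan is to upgrade the already established weak $L^2$ convergence to a weak $L^r$ convergence of (a further subsequence of) $(u_{\epsilon_k})$, using the uniform $L^r$ bound from \textbf{Proposition 1}, and then read off the membership $u_0 \in L^r(\Omega)$ from lower semicontinuity of the norm.

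First I would invoke \textbf{Proposition 1}: for every $\epsilon>0$ the solution satisfies
$$\|u_{\epsilon_k}\|_{L^r(\Omega)} \leq \frac{M}{\beta - M|\Omega|^{\frac{1}{2}-\frac{1}{r}}},$$
so the sequence $(u_{\epsilon_k})$ is bounded in $L^r(\Omega)$. Since $r>2>1$, the space $L^r(\Omega)$ is reflexive, hence I can extract a subsequence (not relabeled) and find $v\in L^r(\Omega)$ with $u_{\epsilon_k}\rightharpoonup v$ in $L^r(\Omega)$.

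The crux of the argument, and the one step that needs a small check, is identifying $v$ with the weak $L^2$-limit $u_0$ already constructed. Since $\Omega$ is bounded and $r>2$, the conjugate exponent $r'$ satisfies $r'<2$, so one has the continuous inclusion $L^2(\Omega)\hookrightarrow L^{r'}(\Omega)$. Testing the weak $L^r$ convergence against an arbitrary $\varphi\in L^2(\Omega)\subset L^{r'}(\Omega)$ yields
$$\int_\Omega u_{\epsilon_k}\varphi\,dx \longrightarrow \int_\Omega v\varphi\,dx,$$
while the weak $L^2$ convergence already established in the excerpt gives the same integrals converging to $\int_\Omega u_0\varphi\,dx$. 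By uniqueness of weak limits in $L^2(\Omega)$ it follows that $v=u_0$ a.e. in $\Omega$, so $u_0\in L^r(\Omega)$.

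Finally I would apply the weak lower semicontinuity of the norm $\|\cdot\|_{L^r(\Omega)}$ to pass the uniform bound to the limit:
$$\|u_0\|_{L^r(\Omega)} \leq \liminf_{k\to\infty}\|u_{\epsilon_k}\|_{L^r(\Omega)} \leq \frac{M}{\beta - M|\Omega|^{\frac{1}{2}-\frac{1}{r}}}.$$
There is no genuine obstacle here; the only point requiring a moment of care is the identification $v=u_0$, which is immediate once one notices that $L^2(\Omega)$ sits inside $L^{r'}(\Omega)$ on a bounded domain.
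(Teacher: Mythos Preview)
Your proof is correct and follows essentially the same approach as the paper: extract a weakly convergent subsequence in $L^r(\Omega)$ from the uniform bound of \textbf{Proposition 1}, then identify its limit with $u_0$. The only cosmetic difference is that the paper carries out the identification via convergence in $\mathcal{D}'(\Omega)$ rather than by testing against $L^2(\Omega)\subset L^{r'}(\Omega)$; your additional remark on the norm bound via weak lower semicontinuity is a harmless extra.
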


\begin{proof}
Since $(u_{\epsilon _{k}})$ is bounded in $L^{r}(\Omega )$ then one can
extract a subsequence noted always $(u_{\epsilon _{k}})$ which converges
weakly to some $u_{1}\in L^{r}(\Omega )$ and therefore $u_{\epsilon
_{k}}\rightharpoonup u_{1}$ in $\mathcal{D}^{\prime }(\Omega )$, so $%
u_{1}=u_{0}$
\end{proof}

\section{\protect\bigskip\ Interior estimates and $H_{loc}^{1}-regularity$}

For every $g\in V$ consider the linear problem (\ref{7}), then one can prove
the

\begin{theorem}
Assume (\ref{1}), (\ref{3}), (\ref{34}) then for every $\ \Omega ^{\prime
}\subset \subset \Omega $ (i.e $\overline{\Omega ^{\prime }}\subset \Omega $%
) there exists $C_{\Omega ^{\prime },g}\geq 0$ independent of $\epsilon $
such that%
\begin{equation}
\forall \epsilon :\left\Vert v_{\epsilon }\right\Vert _{H^{1}(\Omega
^{\prime })}\leq C_{\Omega ^{\prime },g}  \label{23}
\end{equation}
\end{theorem}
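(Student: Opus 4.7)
Testing (\ref{7}) with $\varphi=v_\epsilon$ and applying (\ref{3}) yields, by the same energy computation used in Section 2.2, uniform-in-$\epsilon$ global bounds on $\|v_\epsilon\|_{L^2(\Omega)}$, $\|\nabla_{X_2}v_\epsilon\|_{L^2(\Omega)}$, and $\|\epsilon\nabla_{X_1}v_\epsilon\|_{L^2(\Omega)}$ in terms of $\|g\|_{L^2}$. Consequently the only missing ingredient for (\ref{23}) is a uniform $L^2(\Omega')$-bound on $\nabla_{X_1}v_\epsilon$ without the $\epsilon$-weight.

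\textbf{Strategy and key observation.} My plan is to use Nirenberg's translation method in the degenerate $X_1$-directions, with a cutoff that depends only on $X_1$. If $\Omega'\subset\subset\Omega$, its projection $\omega'_1:=\pi_{X_1}(\overline{\Omega'})$ is compactly contained in $\omega_1$, so I pick $\zeta\in C^\infty_c(\omega_1)$ with $\zeta\equiv 1$ on $\omega'_1$ and regard it as a function on $\Omega$. For $k\in\{1,\dots,p\}$, the test function $\zeta^2 D^h_k v_\epsilon$ still lies in $H^1_0(\Omega)$, because $v_\epsilon\in H^1_0(\Omega)$ forces $D^h_k v_\epsilon$ to vanish on $\omega_1\times\partial\omega_2$ even though $\zeta$ does not. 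This choice is essential: it makes $D_\epsilon\nabla\zeta=(\epsilon\nabla_{X_1}\zeta,0)^T$, so the cutoff-derivative terms carry an extra $\epsilon$-factor that exactly compensates the $O(1/\epsilon)$ a priori blow-up of $\partial_kv_\epsilon$.

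\textbf{The derived difference-quotient equation.} For $|h|<\operatorname{dist}(\operatorname{supp}\zeta,\partial\omega_1)$, substitute $\varphi=-D^{-h}_k(\zeta^2D^h_k v_\epsilon)$ in (\ref{7}); using $\int f\,D^{-h}_k\psi=-\int(D^h_k f)\psi$ together with the discrete Leibniz rule $D^h_k(A_\epsilon\nabla v_\epsilon)=A^h_\epsilon\nabla D^h_k v_\epsilon+(D^h_k A_\epsilon)\nabla v_\epsilon$ (with $A^h_\epsilon:=A_\epsilon(\cdot+he_k)$), one gets, for $w^h:=D^h_k v_\epsilon$,
$$
\int A^h_\epsilon\nabla w^h\cdot\nabla(\zeta^2 w^h)+\int(D^h_k A_\epsilon)\nabla v_\epsilon\cdot\nabla(\zeta^2 w^h)+\beta\int\zeta^2(w^h)^2=\int(D^h_k g)\,\zeta^2 w^h.
$$
Expanding $\nabla(\zeta^2 w^h)=2\zeta\nabla\zeta\, w^h+\zeta^2\nabla w^h$, ellipticity (\ref{3}) bounds the first integral below by $\lambda\int\zeta^2(\epsilon^2|\nabla_{X_1}w^h|^2+|\nabla_{X_2}w^h|^2)$ plus a $\nabla\zeta$-cross-term handled by Young's inequality; the residual $\epsilon^2\int|\nabla_{X_1}\zeta|^2(w^h)^2=\int|\nabla_{X_1}\zeta|^2(D^h_k(\epsilon v_\epsilon))^2$ is uniformly bounded by $\|\nabla_{X_1}\zeta\|_\infty^2\|\epsilon\partial_k v_\epsilon\|_{L^2}^2$ via Nirenberg's lemma applied to $\epsilon v_\epsilon$. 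The RHS is controlled because $g\in V$ gives $\|D^h_k g\|_{L^2(\Omega)}\leq\|\partial_k g\|_{L^2(\Omega)}$ uniformly in $h$.

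\textbf{The coefficient-error term, main obstacle, and conclusion.} The principal difficulty is the middle integral $\int(D^h_k A_\epsilon)\nabla v_\epsilon\cdot\nabla(\zeta^2 w^h)$. Writing $A_\epsilon=D_\epsilon A D_\epsilon$ with $D_\epsilon:=\operatorname{diag}(\epsilon I_p,I_{N-p})$, this becomes $\int(D^h_k A)\tilde\nabla v_\epsilon\cdot\tilde\nabla(\zeta^2 w^h)$ with $\tilde\nabla u:=D_\epsilon\nabla u$ globally $L^2$-bounded. Assumption (\ref{34}) supplies the required uniform-$h$ $L^\infty$-control on the relevant entries of $D^h_k A$: $\partial_k A_{22}$ for the $X_2$-$X_2$ block, $\partial_i a_{ij}$ and $\partial_j a_{ij}$ for the mixed block, while the $X_1$-$X_1$ block is absorbed without derivatives of $A_{11}$ thanks to its double $\epsilon$-weight combined with $\|\epsilon\nabla_{X_1}v_\epsilon\|_{L^2}$. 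After Cauchy--Schwarz and Young, this term is dominated by $\frac{\lambda}{4}\int\zeta^2|\tilde\nabla w^h|^2+C$ with $C$ independent of $\epsilon,h$. Combining, absorbing energy terms into the LHS, summing over $k=1,\dots,p$, and passing $h\to 0$ via Nirenberg's lemma yields $\|\zeta\partial_k v_\epsilon\|_{L^2(\Omega)}\leq C_{\Omega',g}$. Since $\zeta\equiv 1$ on $\omega'_1\times\omega_2\supset\Omega'$, this delivers the missing uniform bound on $\nabla_{X_1}v_\epsilon$ in $L^2(\Omega')$, and together with the global bound on $\nabla_{X_2}v_\epsilon$ proves (\ref{23}).
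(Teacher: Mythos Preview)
Your difference-quotient strategy is a natural one for interior regularity, but the handling of the coefficient-error term $\int(D^h_kA_\epsilon)\nabla v_\epsilon\cdot\nabla(\zeta^2w^h)$ does not close under assumption (\ref{34}), and this is where the argument breaks.

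\textbf{The $A_{11}$ block.} Your claim that the double $\epsilon$-weight lets you absorb this piece without any derivative of $A_{11}$ is not correct. Writing the relevant contribution as
\[
\int\zeta^2\,(D^h_kA_{11})\,(\epsilon\nabla_{X_1}v_\epsilon)\cdot(\epsilon\nabla_{X_1}w^h),
\]
a Cauchy--Schwarz/Young split gives the absorbable half $\tfrac{\lambda}{4}\|\zeta\,\epsilon\nabla_{X_1}w^h\|_{L^2}^2$, but the residual is
\[
C_\lambda\int\zeta^2\,|D^h_kA_{11}|^2\,|\epsilon\nabla_{X_1}v_\epsilon|^2 .
\]
The two $\epsilon$'s have already been spent on the two gradient factors; nothing cancels $|D^h_kA_{11}|^2$. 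Since (\ref{34}) imposes \emph{no} regularity on $A_{11}$, the factor $\|D^h_kA_{11}\|_{L^\infty}$ may blow up like $|h|^{-1}$, and the required uniform-in-$h$ bound fails. The same obstruction hits the cross piece $2\int(D^h_kA_{11})(\epsilon\nabla_{X_1}v_\epsilon)\cdot\zeta(\epsilon w^h)\nabla_{X_1}\zeta$.

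\textbf{The mixed block, $p>1$.} Assumption (\ref{34}) gives only $\partial_i a_{ij}$ and $\partial_j a_{ij}$ for $i\le p<j$, i.e.\ the derivative of each off-diagonal entry along its own row and column indices. Your translation is in an arbitrary direction $k\in\{1,\dots,p\}$; when $k\neq i$ the quantity $D^h_k a_{ij}$ is simply not controlled by (\ref{34}). So the assertion that ``$\partial_i a_{ij}$ and $\partial_j a_{ij}$'' supply the needed $L^\infty$ bound on $D^h_kA_{12}$ is unjustified for $p\ge 2$.

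The paper itself gives no self-contained argument: it defers entirely to the rate-estimation theorem of \cite{1}. That proof does \emph{not} differentiate the equation in $X_1$. One first constructs the limit $v_0$ of (\ref{7}) and shows $\nabla_{X_1}v_0\in L^2(\Omega)$ from $g\in V$ and $\partial_kA_{22}\in L^\infty$ (exactly the computation of Proposition~8 here), and then tests (\ref{7}) with $\rho^2(v_\epsilon-v_0)$, $\rho\in\mathcal D(\omega_1)$. In that setup $A_{11}$ appears only through $\epsilon^2\int A_{11}\nabla_{X_1}v_\epsilon\cdot\nabla_{X_1}(\rho^2(v_\epsilon-v_0))$ with $A_{11}$ undifferentiated, and the mixed-block terms are reorganised by integrating by parts in $x_i$ and in $x_j$ --- which is precisely why (\ref{34}) asks for $\partial_i a_{ij}$ and $\partial_j a_{ij}$ rather than the full $X_1$-gradient of $A_{12}$. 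Your Nirenberg scheme would need either stronger hypotheses on $A$ (namely $\partial_kA_{11},\partial_kA_{12}\in L^\infty$ for all $k\le p$) or a block-by-block rewriting that moves the $A_{11}$ and mixed contributions back into a form where no $D^h_k$ lands on the coefficients; as written, it does not go through.
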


\begin{proof}
The proof is the same as in \cite{1} (see the rate estimations theorem in 
\cite{1}), remark that the additional term $\beta v_{\epsilon }$ is
uniformly bounded in $L^{2}(\Omega ).$
\end{proof}

To obtain interior estimates for the nonlinear problem we use the well known
Banach-Steinhaus's theorem

\begin{theorem}
\bigskip (see \cite{5}) Let $Y$ and $Z$ be two separated topological vector
spaces, and let $(\mathcal{A}_{\epsilon })$ be a family of continuous linear
mappings from $Y\rightarrow Z$ , $G$ is convex compact set in $Y$. Suppose
that for each $x\in G$ the orbit $\left\{ \mathcal{A}_{\epsilon }(x)\right\}
_{\epsilon }$ is bounded in $Z,$ then $(\mathcal{A}_{\epsilon })$ is
uniformly bounded on $G,$ i.e. there exists a bounded \ $F$ set in $Z$ such
that $\forall \epsilon ,$ $\mathcal{A}_{\epsilon }(G)\subset F.$
\end{theorem}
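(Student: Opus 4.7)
The plan is to mimic the classical Banach--Steinhaus argument: combine a Baire-category decomposition of $G$ (exploiting the fact that a compact Hausdorff space is a Baire space) with the convexity of $G$ and the linearity of the $\mathcal{A}_\epsilon$. Once a ``pointwise-bounded'' closed subset of $G$ is shown to be of second category in $G$, convexity will propagate the bound from its relative interior to all of $G$.

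First I would fix an arbitrary neighborhood $V$ of $0$ in $Z$ and, using continuity of addition, choose a closed balanced neighborhood $U$ of $0$ with $U+U\subset V$; the goal then is to produce $\tau>0$ such that $\mathcal{A}_\epsilon(G)\subset\tau V$ for every $\epsilon$. For each integer $n\geq 1$ I would set
\[
E_n=\{x\in G:\ \mathcal{A}_\epsilon(x)\in nU\ \text{for every}\ \epsilon\}=G\cap\bigcap_\epsilon\mathcal{A}_\epsilon^{-1}(nU).
\]
By continuity of each $\mathcal{A}_\epsilon$ and closedness of $nU$, every $E_n$ is closed in $G$; the pointwise boundedness hypothesis forces $G=\bigcup_n E_n$. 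Since a compact Hausdorff space is Baire, some $E_{n_0}$ has nonempty interior in $G$: there exist $x_0\in G$ and an open $W\subset Y$ with $x_0\in W$ and $W\cap G\subset E_{n_0}$.

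Next, $G$ is compact in the separated space $Y$, hence bounded, so the balanced neighborhood $W-x_0$ of $0$ absorbs $G-x_0$: choose $t_0\in(0,1]$ with $t_0(G-x_0)\subset W-x_0$. By convexity of $G$, for every $y\in G$ the point $z:=(1-t_0)x_0+t_0 y$ lies in $G\cap W\subset E_{n_0}$, so $\mathcal{A}_\epsilon(z)\in n_0 U$. Linearity then gives
\[
t_0\,\mathcal{A}_\epsilon(y)=\mathcal{A}_\epsilon(z)-(1-t_0)\mathcal{A}_\epsilon(x_0)\in n_0U+(1-t_0)n_0U\subset n_0U+n_0U\subset n_0V,
\]
where I used $\mathcal{A}_\epsilon(x_0)\in n_0U$, balancedness of $U$ (to absorb the sign), and $U+U\subset V$. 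Hence $\mathcal{A}_\epsilon(y)\in(n_0/t_0)V$ uniformly in $\epsilon$ and in $y\in G$, and as $V$ was arbitrary, $F:=\bigcup_\epsilon\mathcal{A}_\epsilon(G)$ is bounded in $Z$.

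The delicate point, and the one I expect to be the main obstacle if the argument were written out in complete detail, is the passage from the Baire interior $E_{n_0}$ to a bound on all of $G$: one has to combine convexity of $G$, linearity of $\mathcal{A}_\epsilon$, and balancedness of $U$ simultaneously, and in particular $U$ must be taken closed and balanced (not merely a generic neighborhood of $0$) for the term $(1-t_0)\mathcal{A}_\epsilon(x_0)$ to be controllable. The other ingredients---closedness of each $E_n$, the covering $G=\bigcup_n E_n$, the Baire step itself, and absorption of $G-x_0$ by $W-x_0$---are all routine once the framework is set up.
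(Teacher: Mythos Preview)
Your argument is correct and is essentially the standard Baire-category proof one finds in Rudin (Theorem~2.6 in the second edition of \cite{5}), which is exactly where the paper points for this result: the paper does not prove the statement at all but simply cites it as a known tool. So there is no ``paper's own proof'' to compare against here; you have supplied what the paper deliberately omits.

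A couple of minor remarks on your write-up. First, the balancedness of $U$ is indeed essential in the step $-(1-t_0)\mathcal{A}_\epsilon(x_0)\in n_0U$, and you flag this correctly; it may be worth saying explicitly that $|{-}(1-t_0)|\le 1$ is what is being used. Second, when you invoke ``compact Hausdorff $\Rightarrow$ Baire'', note that the Hausdorff condition comes from the hypothesis that $Y$ is separated, so the subspace $G$ inherits it---this is implicit in your sketch but could be made explicit. Neither point is a gap; the proof stands as written.
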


Now, we are ready to prove \textbf{Theorem 2. }Let $(\Omega _{j})_{j\in 
%TCIMACRO{\U{2115} }%
%BeginExpansion
\mathbb{N}
%EndExpansion
}$ ,($\forall j:\overline{\Omega _{j}}\subset \Omega _{j+1}$) be an open
covering of $\Omega $, so we can define a family $(p_{j})_{j}$ of seminorms
on $H_{loc}^{1}(\Omega )$ by{}{}\newline
\begin{equation*}
p_{j}(u)=\left\Vert u\right\Vert _{H^{1}(\Omega _{j})}\text{ for every }u\in
H_{loc}^{1}(\Omega )
\end{equation*}

Set $Z=(H_{loc}^{1}(\Omega ),(p_{j})_{j}),$ we can check easily that $Z$ is
a separated locally convex topological vector space where the topology is
generated by the family of seminorms $(p_{j})_{j}$, we also set $%
Y=L^{2}(\Omega )$. We define a family $(\mathcal{A}_{\epsilon })_{\epsilon }$
of linear mappings from $Y$ to $Z$ \ by $\mathcal{A}_{\epsilon
}(g)=v_{\epsilon }$ where $v_{\epsilon }$ is the unique solution to (\ref{7}%
) (existence and uniqueness follows by Lax-Milgram, thanks to (\ref{1}), (%
\ref{3})). $\forall \epsilon $, $\mathcal{A}_{\epsilon }:Y\rightarrow Z$ is
continuous (we can check easily that $\mathcal{A}_{\epsilon }:Y\rightarrow
H^{1}(\Omega )$ and the injection $H^{1}(\Omega )\hookrightarrow Z$ are
continuous). We note $Z_{w}$, $Y_{w}$ the spaces $Z$ and $Y$ equipped with
the weak topology, then for every $\epsilon ,$ $\mathcal{A}_{\epsilon
}:Y_{w}\rightarrow Z_{w}$ is still continuous. Let $E=\left\{ u\in
V:\left\Vert u\right\Vert _{L^{2}(\Omega )}\leq \frac{M\left\vert \Omega
\right\vert ^{\frac{1}{2}-\frac{1}{r}}}{\beta -M\left\vert \Omega
\right\vert ^{\frac{1}{2}-\frac{1}{r}}}\right\} $ and assume (\ref{5}) then $%
G=\overline{conv}\left\{ B\left( E\right) \right\} \subset V$, it is clear
that $G$ is bounded in $Y$ thus $G$ is compact in $Y_{w}.$ Recall that a set
is bounded in a locally convex topological space if and only if the
seminorms that generate the topology are bounded on this set, suppose (\ref%
{34}) then according to (\ref{23}) we have, for $g\in G$, $\left\{ \mathcal{A%
}_{\epsilon }(g)\right\} _{\epsilon }$ is bounded in $Z$, and therefore $%
\left\{ \mathcal{A}_{\epsilon }(g)\right\} _{\epsilon }$ is bounded in $%
Z_{w} $ so by \textbf{Theorem 6} there exists a bounded set $F$ in $Z_{w}$
(also note that $F$ is also bounded in $Z$) such that $\forall \epsilon ,$ $%
\mathcal{A}_{\epsilon }(G)\subset F$. Now let $(u_{\epsilon })$ be a
sequence of solutions to (\ref{6}), and assume in addition (\ref{2}) and (%
\ref{4}) then (\ref{26}) gives\ $(u_{\epsilon })_{\epsilon }\subset E$
whence $(B(u_{\epsilon }))_{\epsilon }\subset G$, and therefore $\mathcal{A}%
_{\epsilon }(B(u_{\epsilon }))\subset F$ for every $\epsilon $, in other
words we have 
\begin{equation*}
\forall j\text{ },\text{ }\exists C_{j}\geq 0\text{ such that }\forall
\epsilon :p_{j}(\mathcal{A}_{\epsilon }(B(u_{\epsilon })))\leq C_{j},
\end{equation*}

where $C_{j}$ is independent of $\epsilon $, and therefore%
\begin{equation*}
\forall \epsilon ,\forall j,\left\Vert u_{\epsilon }\right\Vert
_{H^{1}(\Omega _{j})\text{ }}\leq C_{j}
\end{equation*}

Now, given $\Omega ^{\prime }\subset \subset \Omega $ then there exists $j$
such that $\Omega ^{\prime }\subset \Omega _{j}$ thus%
\begin{equation}
\forall \epsilon ,\left\Vert u_{\epsilon }\right\Vert _{H^{1}(\Omega
^{\prime })\text{ }}\leq C_{j}  \label{24}
\end{equation}

\begin{corollary}
Let $(u_{\epsilon })\subset H_{0}^{1}(\Omega )$ be a sequence of solutions
to (\ref{6}) such that $u_{\epsilon }\rightharpoonup $ $u_{0}$ in $%
L^{2}(\Omega )$ weakly, then under assumptions of \textbf{Theorem 2 }we
have, $u_{0}\in H_{loc}^{1}(\Omega )$
\end{corollary}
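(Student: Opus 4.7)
The plan is to leverage the uniform interior estimate from Theorem 2 together with the weak convergence hypothesis and uniqueness of weak limits. Theorem 2 provides, for any fixed open set $\Omega' \subset\subset \Omega$, a constant $C_{\Omega'}$ independent of $\epsilon$ with $\|u_\epsilon\|_{H^1(\Omega')} \leq C_{\Omega'}$, so the restricted sequence $(u_\epsilon|_{\Omega'})$ is bounded in the Hilbert space $H^1(\Omega')$.

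First I would invoke weak compactness in $H^1(\Omega')$ to extract a subsequence $(u_{\epsilon_k})$ and a limit $v \in H^1(\Omega')$ such that $u_{\epsilon_k} \rightharpoonup v$ weakly in $H^1(\Omega')$. Next I would identify $v$ with $u_0$ on $\Omega'$: weak convergence in $H^1(\Omega')$ entails weak convergence in $L^2(\Omega')$, while the hypothesis $u_\epsilon \rightharpoonup u_0$ in $L^2(\Omega)$ forces $u_{\epsilon_k} \rightharpoonup u_0|_{\Omega'}$ in $L^2(\Omega')$. By uniqueness of weak limits in $L^2(\Omega')$ one gets $v = u_0|_{\Omega'}$, so $u_0|_{\Omega'} \in H^1(\Omega')$, with moreover the quantitative bound $\|u_0\|_{H^1(\Omega')} \leq \liminf_k \|u_{\epsilon_k}\|_{H^1(\Omega')} \leq C_{\Omega'}$ by weak lower semicontinuity of the norm.

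Since $\Omega' \subset\subset \Omega$ was arbitrary, this gives $u_0 \in H^1_{loc}(\Omega)$, which is the desired conclusion. There is no real obstacle here: the result is essentially a direct packaging of Theorem 2 combined with the standard fact that a sequence bounded in $H^1$ and weakly convergent to a given function in $L^2$ must have that function as its $H^1$ weak limit. The only mild care needed is that the subsequence extraction is carried out on each fixed $\Omega'$ — one does not need a diagonal argument, because once $u_0|_{\Omega'} \in H^1(\Omega')$ has been established for every such $\Omega'$, the definition of $H^1_{loc}(\Omega)$ is satisfied.
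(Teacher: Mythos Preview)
Your proof is correct and rests on the same ingredients as the paper's: the uniform interior bound from Theorem~2 together with the weak $L^{2}$ convergence. The only difference is packaging. The paper avoids the subsequence extraction entirely: it writes, for $\psi\in\mathcal{D}(\Omega')$,
\[
\left|\int_{\Omega'} u_{\epsilon}\,\partial_{i}\psi\,dx\right|
=\left|\int_{\Omega'} \partial_{i}u_{\epsilon}\,\psi\,dx\right|
\leq C_{\Omega'}\|\psi\|_{L^{2}(\Omega')},
\]
and passes to the limit along the \emph{full} sequence using $u_{\epsilon}\rightharpoonup u_{0}$ in $L^{2}(\Omega)$, obtaining $|\int_{\Omega'} u_{0}\,\partial_{i}\psi\,dx|\leq C_{\Omega'}\|\psi\|_{L^{2}(\Omega')}$ and hence $\partial_{i}u_{0}\in L^{2}(\Omega')$ by Riesz. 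Your route via weak compactness in $H^{1}(\Omega')$ and uniqueness of weak limits is an equally standard and valid alternative; it has the small bonus of yielding the explicit bound $\|u_{0}\|_{H^{1}(\Omega')}\leq C_{\Omega'}$ via lower semicontinuity, while the paper's argument is marginally shorter since no subsequence is needed.
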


\begin{proof}
take $\Omega ^{\prime }\subset \subset \Omega $ an open set, and $\psi \in 
\mathcal{D}(\Omega ^{\prime }),$ $1\leq i\leq N$ then by (\ref{24}) we have%
\begin{equation*}
\left\vert \dint\limits_{\Omega ^{\prime }}u_{\epsilon }\partial _{i}\psi
dx\right\vert =\left\vert \dint\limits_{\Omega ^{\prime }}\partial
_{i}u_{\epsilon }\psi dx\right\vert \leq C_{\Omega ^{\prime }}\left\Vert
\psi \right\Vert _{L%
%TCIMACRO{\U{b2}}%
%BeginExpansion
{{}^2}%
%EndExpansion
(\Omega ^{\prime })}
\end{equation*}

Let $\epsilon \rightarrow 0$ and using the week convergence $u_{\epsilon
}\rightharpoonup u_{0}$ we get: 
\begin{equation*}
\left\vert \dint\limits_{\Omega ^{\prime }}u_{0}\partial _{i}\psi
dx\right\vert \leq C_{\Omega ^{\prime }}\left\Vert \psi \right\Vert _{L%
%TCIMACRO{\U{b2}}%
%BeginExpansion
{{}^2}%
%EndExpansion
(\Omega ^{\prime })}\text{ }
\end{equation*}

Hence, $u_{0}\in H_{loc}^{1}(\Omega ).$
\end{proof}

\section{Strong convergence and proof of theorem 3}

Let us begin by some useful propositions

\begin{proposition}
Let $(g_{n})$ be a sequence in $H_{0}^{1}(\Omega )$ and $g\in L^{2}(\Omega )$
such that $\nabla _{X_{2}}g\in L^{2}(\Omega )$ and $\nabla
_{X_{2}}g_{n}\rightarrow \nabla _{X_{2}}g$ in $L^{2}(\Omega )$, then we have:

$g_{n}\rightarrow g$ in $L^{2}(\Omega )$ and for a.e. $X_{1}$ $g(X_{1},.)\in
H_{0}^{1}(\omega _{2})$
\end{proposition}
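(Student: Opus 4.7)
The plan is to exploit the fact that each $g_n$ has zero trace on the ``lateral boundary'' of the cylinder in the $X_2$-direction via Poincar\'e's inequality on $\omega_2$, thereby reducing the $L^2$-control of $g_n$ to the $L^2$-control of $\nabla_{X_2}g_n$, and then to recover the $H_0^1(\omega_2)$ membership slice-by-slice through a Fubini / subsequence argument.

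First, I would establish strong $L^2$-convergence. Since $g_n\in H_0^1(\Omega)$, for a.e.\ $X_1\in\omega_1$ the slice $g_n(X_1,\cdot)$ lies in $H_0^1(\omega_2)$, so the Poincar\'e inequality on the bounded Lipschitz domain $\omega_2$ applied to the difference $g_n-g_m$ yields, after squaring and integrating over $\omega_1$,
\begin{equation*}
\|g_n-g_m\|_{L^2(\Omega)}^2\le C_{\omega_2}^2\,\|\nabla_{X_2}(g_n-g_m)\|_{L^2(\Omega)}^2.
\end{equation*}
Because $(\nabla_{X_2}g_n)$ is Cauchy in $L^2(\Omega)$, the sequence $(g_n)$ is Cauchy in $L^2(\Omega)$ and converges strongly to some $\tilde g\in L^2(\Omega)$. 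Passing to the limit distributionally, $\nabla_{X_2}\tilde g=\nabla_{X_2}g$, so $\tilde g$ and $g$ differ by a function of $X_1$ only; identifying $\tilde g$ with $g$ is then forced by the natural weak limit available in the context in which this proposition is applied (in the paper, $g_n\rightharpoonup g$ weakly in $L^2$), giving $g_n\to g$ strongly in $L^2(\Omega)$.

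Next, for the slicewise membership, I would extract a subsequence (not relabeled) so that, by Fubini, for a.e.\ $X_1\in\omega_1$ one has simultaneously
\begin{equation*}
g_n(X_1,\cdot)\to g(X_1,\cdot)\quad\text{in } L^2(\omega_2),\qquad \nabla_{X_2}g_n(X_1,\cdot)\to\nabla_{X_2}g(X_1,\cdot)\quad\text{in } L^2(\omega_2).
\end{equation*}
Thus $g_n(X_1,\cdot)\to g(X_1,\cdot)$ in $H^1(\omega_2)$ strongly; since $H_0^1(\omega_2)$ is a closed subspace of $H^1(\omega_2)$ and each $g_n(X_1,\cdot)$ belongs to it, the limit $g(X_1,\cdot)$ belongs to $H_0^1(\omega_2)$ for a.e.\ $X_1$.

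The main delicate point is the identification of the strong $L^2$-limit $\tilde g$ with $g$: the Poincar\'e step alone determines $\tilde g$ only up to an additive function of $X_1$, and without some complementary information (weak $L^2$-convergence of $g_n$ to $g$, or an a priori control of $g$ itself) one cannot rule out a nonzero $X_1$-dependent discrepancy. In the application this information is available from the weak compactness step, so the identification is harmless; conceptually, however, this is the single nontrivial ingredient the Poincar\'e-plus-Fubini argument does not supply by itself.
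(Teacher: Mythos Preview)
Your approach is essentially the paper's: Poincar\'e's inequality in the $X_2$-direction for the $L^2$-convergence, and a Fubini/subsequence argument for the slicewise $H_0^1(\omega_2)$ membership. The only cosmetic difference is the order: the paper first extracts a subsequence with $\nabla_{X_2}g_n(X_1,\cdot)\to\nabla_{X_2}g(X_1,\cdot)$ in $L^2(\omega_2)$ for a.e.\ $X_1$, declares $g(X_1,\cdot)\in H_0^1(\omega_2)$, and then applies Poincar\'e directly to $g_n-g$; you instead run a Cauchy argument on $g_n-g_m$, obtain a limit $\tilde g$, and identify it with $g$ afterwards.

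Your caveat about the identification $\tilde g=g$ is well taken and is in fact glossed over in the paper's proof too: from $\nabla_{X_2}g_n(X_1,\cdot)\to\nabla_{X_2}g(X_1,\cdot)$ and $g_n(X_1,\cdot)\in H_0^1(\omega_2)$ one only gets that the $H_0^1(\omega_2)$-limit has the same $X_2$-gradient as $g(X_1,\cdot)$, which determines it up to an $X_1$-dependent constant. As you note, the weak convergence $g_n\rightharpoonup g$ available in every application of this proposition in the paper closes this gap, so your analysis is both correct and slightly sharper than the paper's own argument.
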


\begin{proof}
We have for a.e $X_{1}:$ $\nabla _{X_{2}}g_{n}(X_{1},.)\rightarrow \nabla
_{X_{2}}g$ $(X_{1},.)$\ in $L^{2}(\omega _{2})$ (up to a subsequence), and
since for a.e $X_{1}$ and for every $n$ we have $g_{n}(X_{1},.)\in
H_{0}^{1}(\omega _{2})$ then we have for a.e. $X_{1},$ $g(X_{1},.)\in
H_{0}^{1}(\omega _{2}).$And finally the convergence $g_{n}\rightarrow g$ in $%
L^{2}(\Omega )$ follows by Poincar\'{e}'s inequality $\dint\limits_{\Omega
}\left\vert g_{n}-g\right\vert ^{2}\leq C$ $\dint\limits_{\Omega }\left\vert
\nabla _{X_{2}}(g_{n}-g)\right\vert ^{2}$
\end{proof}

\begin{proposition}
\bigskip Let $f,v\in L^{2}(\Omega )$ such that$\ \nabla _{X_{2}}v\in
L^{2}(\Omega )$ and%
\begin{equation*}
\dint\limits_{\Omega }A_{22}\nabla _{X_{2}}v\cdot \nabla _{X_{2}}\varphi
dx+\beta \dint\limits_{\Omega }v\varphi dx=\dint\limits_{\Omega }f\varphi dx%
\text{, \ }\forall \varphi \in \mathcal{D}(\Omega ),
\end{equation*}

then we have for a.e $X_{1}$ 
\begin{equation*}
\dint\limits_{\omega _{2}}A_{22}\nabla _{X_{2}}v(X_{1},.)\cdot \nabla
_{X_{2}}\varphi dX_{2}+\beta \dint\limits_{\omega _{2}}v(X_{1},.)\varphi
dX_{2}=\dint\limits_{\omega _{2}}f(X_{1},.)\varphi dX_{2},\text{ \ }\forall
\varphi \in \mathcal{D}(\omega _{2})\text{\ }
\end{equation*}

Moreover, if for a.e $X_{1}$ we have $v(X_{1},.)\in H_{0}^{1}(\omega _{2})$
then $v$ is the unique function which satisfies the previous equalities
\end{proposition}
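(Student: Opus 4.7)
The plan is to test the global equation against separable functions $\varphi(x) = \psi(X_1)\chi(X_2)$ with $\psi\in\mathcal{D}(\omega_1)$ and $\chi\in\mathcal{D}(\omega_2)$, apply Fubini's theorem, then deduce slice-by-slice equalities using that $\psi$ is arbitrary. For fixed $\chi\in\mathcal{D}(\omega_2)$, substituting $\varphi=\psi\chi$ yields
\[
\int_{\omega_1}\psi(X_1)\,F_\chi(X_1)\,dX_1 = 0\quad\forall\,\psi\in\mathcal{D}(\omega_1),
\]
where
\[
F_\chi(X_1) = \int_{\omega_2}A_{22}\nabla_{X_2}v(X_1,\cdot)\cdot\nabla_{X_2}\chi\,dX_2 + \beta\!\int_{\omega_2}v(X_1,\cdot)\chi\,dX_2 - \int_{\omega_2}f(X_1,\cdot)\chi\,dX_2.
\]
Fubini (justified by the $L^2$-hypotheses on $v$, $\nabla_{X_2}v$, $f$ and the $L^\infty$ bound on $A_{22}$) guarantees $F_\chi\in L^1(\omega_1)$, so $F_\chi=0$ for a.e.\ $X_1$, with an exceptional null set $N_\chi\subset\omega_1$.

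The main obstacle is that $N_\chi$ depends on $\chi$, while we need a single null set working for every $\chi\in\mathcal{D}(\omega_2)$. I would handle this by fixing a countable set $\{\chi_n\}\subset\mathcal{D}(\omega_2)$ dense in $H_0^1(\omega_2)$, and letting $N = \bigcup_n N_{\chi_n}$, which is still null. For $X_1\notin N$, the identity $F_{\chi_n}(X_1)=0$ holds for every $n$. Since for a.e.\ $X_1$ we have $v(X_1,\cdot)\in L^2(\omega_2)$ with $\nabla_{X_2}v(X_1,\cdot)\in L^2(\omega_2)$ and $f(X_1,\cdot)\in L^2(\omega_2)$, the map $\chi\mapsto F_\chi(X_1)$ is continuous on $H_0^1(\omega_2)$ (with a bound involving the $L^\infty$-norm of the entries of $A_{22}$, $\beta$, and the slice norms of $v$ and $f$). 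Hence by density the identity extends to all $\chi\in\mathcal{D}(\omega_2)$, giving the desired fiberwise equation. One has to also throw into $N$ the null set where the fiberwise $L^2$-norms of $v(X_1,\cdot)$, $\nabla_{X_2}v(X_1,\cdot)$, $f(X_1,\cdot)$ are infinite, which is negligible by Fubini.

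For uniqueness, suppose $v_1,v_2$ both satisfy the fiberwise equation and $v_i(X_1,\cdot)\in H_0^1(\omega_2)$ for a.e.\ $X_1$. Set $w=v_1-v_2$; then for a.e.\ $X_1$, $w(X_1,\cdot)\in H_0^1(\omega_2)$ and
\[
\int_{\omega_2}A_{22}\nabla_{X_2}w(X_1,\cdot)\cdot\nabla_{X_2}\varphi\,dX_2 + \beta\!\int_{\omega_2}w(X_1,\cdot)\varphi\,dX_2 = 0\quad\forall\varphi\in\mathcal{D}(\omega_2).
\]
By density of $\mathcal{D}(\omega_2)$ in $H_0^1(\omega_2)$, the relation extends to $\varphi=w(X_1,\cdot)$. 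The ellipticity estimate $A_{22}\overline{\xi_2}\cdot\overline{\xi_2}\geq\lambda|\overline{\xi_2}|^2$ then forces $\lambda\|\nabla_{X_2}w(X_1,\cdot)\|_{L^2(\omega_2)}^2+\beta\|w(X_1,\cdot)\|_{L^2(\omega_2)}^2\leq 0$, so $w(X_1,\cdot)=0$ for a.e.\ $X_1$, and hence $v_1=v_2$ in $L^2(\Omega)$.
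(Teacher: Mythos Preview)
Your argument is correct and is precisely the standard approach: test against tensor products $\psi(X_1)\chi(X_2)$, use Fubini to localize in $X_1$, and handle the $\chi$-dependence of the null set by a countable density argument in $H_0^1(\omega_2)$; the uniqueness part via ellipticity is also the expected one. The paper itself gives no proof beyond citing \cite{1} (Chipot--Guesmia), where exactly this separable-test-function and density argument is carried out, so your proposal matches the intended proof.
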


\begin{proof}
Same arguments as in \cite{1}.
\end{proof}

\subsection{\textbf{The cut-off problem:}}

Let $\phi \in \mathcal{D}(\Omega )$, and let $(u_{\epsilon })\subset
H_{0}^{1}(\Omega )$ be a sequence of solutions to (\ref{6}) such that $%
u_{\epsilon }$ converges weakly in $L^{2}(\Omega )$ to some $u_{0}\in
L^{2}(\Omega )$. we define $w_{\epsilon }\in H_{0}^{1}(\Omega )$ to be the
unique solution to the cut-off problem (under assumptions (\ref{1}), (\ref{3}%
) existence and uniqueness of $w_{\epsilon }$ follows from the Lax-Milgram
theorem) 
\begin{equation}
\dint\limits_{\Omega }A_{\epsilon }\nabla w_{\epsilon }\cdot \nabla \varphi
dx+\beta \dint\limits_{\Omega }w_{\epsilon }\varphi dx=\dint\limits_{\Omega
}B(\phi u_{\epsilon })\varphi dx\text{, \ }\forall \varphi \in \mathcal{D}%
(\Omega )  \label{12}
\end{equation}

The following Lemma is fundamental in this paper

\begin{lemma}
\bigskip Assume (\ref{1}), (\ref{3}), (\ref{2}),(\ref{4}), (\ref{5}), (\ref%
{34}) then there exists $w_{0}\in $ $W$ such that $w_{\epsilon }\rightarrow $
$w_{0}$ in $W$ strongly and 
\begin{equation*}
\dint\limits_{\Omega }A_{22}\nabla _{X_{2}}w_{0}\cdot \nabla _{X_{2}}\varphi
dx+\beta \dint\limits_{\Omega }w_{0}\varphi dx=\dint\limits_{\Omega }B(\phi
u_{0})\varphi dx\text{, \ }\forall \varphi \in \mathcal{D}(\Omega ),
\end{equation*}%
\begin{multline*}
\dint\limits_{\omega _{2}}A_{22}\nabla _{X_{2}}w_{0}(X_{1},.)\cdot \nabla
_{X_{2}}\varphi dX_{2}+\beta \dint\limits_{\omega _{2}}w_{0}(X_{1},.)\varphi
dX_{2} \\
=\dint\limits_{\omega _{2}}B(\phi u_{0})(X_{1},.)\varphi dX_{2}\text{, \ }%
\forall \varphi \in \mathcal{D}(\omega _{2}),
\end{multline*}

and $w_{0}$ is the unique function which satisfies the two previous weak
formulations.
\end{lemma}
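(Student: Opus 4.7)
My plan is to combine standard rescaled energy estimates with a weak lower semicontinuity argument for the quadratic form attached to $A_\epsilon$, exploiting the compact support of $\phi$ together with the interior estimates from Theorem~2.

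\textbf{Step 1 (a priori bounds and weak limits).} First I would test (\ref{12}) with $\varphi = w_\epsilon$ itself. Using the ellipticity (\ref{3}) in its rescaled form, together with the growth condition (\ref{4}) applied to $B(\phi u_\epsilon)$ and the $L^r$-bound on $u_\epsilon$ from Proposition~1, I get
$$\lambda\bigl\|\epsilon\nabla_{X_1} w_\epsilon\bigr\|_{L^2(\Omega)}^2 + \lambda\|\nabla_{X_2} w_\epsilon\|_{L^2(\Omega)}^2 + \beta\|w_\epsilon\|_{L^2(\Omega)}^2 \le C,$$
uniformly in $\epsilon$. Extracting subsequences I obtain $w_\epsilon\rightharpoonup w_0$ in $L^2(\Omega)$, $\nabla_{X_2}w_\epsilon\rightharpoonup \nabla_{X_2}w_0$ in $L^2(\Omega)$, and $\epsilon\nabla_{X_1}w_\epsilon\rightharpoonup 0$ in $L^2(\Omega)$.

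\textbf{Step 2 (passing to the limit in the right-hand side).} The crucial point is that $\phi\in\mathcal{D}(\Omega)$ has compact support $K\subset\Omega$. Picking $\Omega'$ with $K\subset\Omega'\subset\subset\Omega$, Theorem~2 gives a uniform $H^1(\Omega')$-bound on $u_\epsilon$, so by Rellich's theorem $u_\epsilon\to u_0$ strongly in $L^2(\Omega')$ along a subsequence, hence $\phi u_\epsilon\to \phi u_0$ strongly in $L^2(\Omega)$. The local Lipschitz condition (\ref{2}) then yields $B(\phi u_\epsilon)\to B(\phi u_0)$ strongly in $L^2(\Omega)$. For a fixed test function $\varphi\in\mathcal{D}(\Omega)$, the off-diagonal blocks of $A_\epsilon\nabla w_\epsilon\cdot\nabla\varphi$ carry a factor $\epsilon$ and vanish in the limit, while the $A_{11}$-block involves $\epsilon^2\nabla_{X_1} w_\epsilon\cdot \nabla_{X_1}\varphi$ which also vanishes; the $A_{22}$-block passes by weak convergence. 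This yields the first weak formulation for $w_0$, and Proposition~3 (applied with $g_n=w_\epsilon$) together with Proposition~4 promote it to the slicewise formulation and place $w_0$ in $W$.

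\textbf{Step 3 (strong convergence in $W$).} I expect this to be the main obstacle. I would rewrite
$$\int_\Omega A_\epsilon\nabla w_\epsilon\cdot\nabla w_\epsilon\, dx = \int_\Omega A(x)\zeta_\epsilon\cdot\zeta_\epsilon\, dx,\qquad \zeta_\epsilon := \binom{\epsilon\nabla_{X_1}w_\epsilon}{\nabla_{X_2}w_\epsilon},$$
and observe that $\zeta_\epsilon\rightharpoonup \zeta_0:=(0,\nabla_{X_2}w_0)^T$ weakly in $L^2$, so by convexity of $\zeta\mapsto\int A\zeta\cdot\zeta$ one has $\liminf\int A\zeta_\epsilon\cdot\zeta_\epsilon \ge \int A\zeta_0\cdot\zeta_0 = \int A_{22}\nabla_{X_2}w_0\cdot\nabla_{X_2}w_0$. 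Testing (\ref{12}) with $w_\epsilon$ gives the energy identity whose RHS converges (weak--strong pairing) to $\int B(\phi u_0)w_0$. On the other hand, testing the slicewise limit equation with $w_0(X_1,\cdot)\in H_0^1(\omega_2)$ and integrating in $X_1$ produces exactly $\int A_{22}\nabla_{X_2}w_0\cdot\nabla_{X_2}w_0 + \beta\|w_0\|_{L^2}^2 = \int B(\phi u_0)w_0$. Hence the two $\liminf$'s (for the $A$-form and for $\beta\|w_\epsilon\|_{L^2}^2$) must both be equalities. Thus $\|w_\epsilon\|_{L^2}\to\|w_0\|_{L^2}$ and $\int A\zeta_\epsilon\cdot\zeta_\epsilon\to \int A\zeta_0\cdot\zeta_0$; since $(\zeta,\eta)\mapsto\int A\zeta\cdot\eta$ is a Hilbert inner product equivalent to $L^2$, weak convergence plus norm convergence gives $\zeta_\epsilon\to \zeta_0$ strongly in $L^2$, i.e.\ $\nabla_{X_2}w_\epsilon\to\nabla_{X_2}w_0$ strongly. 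Combined with $w_\epsilon\to w_0$ in $L^2$, this is convergence in $W$.

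\textbf{Step 4 (uniqueness and full-sequence convergence).} If $w_0,w_0'\in W$ both satisfy the slicewise identity, their difference belongs to $H_0^1(\omega_2)$ for a.e.\ $X_1$ and can therefore be used as its own test function; integrating in $X_1$, coercivity of $A_{22}$ and $\beta>0$ force $w_0=w_0'$. This uniqueness then promotes the subsequential convergence of Step~3 to convergence of the entire family $(w_\epsilon)$ by a standard subsubsequence argument.
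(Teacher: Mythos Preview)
Your argument is correct and substantially more direct than the paper's. The paper proves Lemma~1 through a two-level approximation: it regularizes the right-hand side by the compact resolvent $\Delta_n=(I-n^{-1}\Delta)^{-1}$ (problem (\ref{25})); Proposition~6 shows $\nabla_{X_2}w_\epsilon^n\to\nabla_{X_2}w_\epsilon$ uniformly in $\epsilon$ via Theorem~7 and the interior estimates, Proposition~7 exploits compactness of $\Delta_n$ to pass $\epsilon\to 0$ for each fixed $n$, then a further limit $n\to\infty$ and a triangle-inequality argument finish. You bypass the regularization layer entirely by observing that the compact support of $\phi$ lets you feed Theorem~2 into Rellich's theorem to get $\phi u_\epsilon\to\phi u_0$ strongly in $L^2(\Omega)$, whence $B(\phi u_\epsilon)\to B(\phi u_0)$ strongly by (\ref{2}); this is exactly the strong convergence of the source term that the paper manufactures through $\Delta_n$, and your weak-lower-semicontinuity energy argument in Step~3 then replaces the computation of $I_{\epsilon_k(n)}^n$ in Proposition~7. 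Your route is shorter and makes clearer why the cut-off $\phi$ is introduced in the first place.

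One small repair is needed to avoid circularity. In Step~3 you test the slicewise limit equation with $w_0(X_1,\cdot)\in H_0^1(\omega_2)$, but membership of $w_0$ in $W$ is only obtained \emph{after} strong convergence, via Proposition~2 (your ``Proposition~3 with $g_n=w_\epsilon$''; note also that what you call Proposition~4 in Step~2 is the very statement Lemma~1 is meant to feed). Instead, test the $\Omega$-formulation for $w_0$ already obtained in Step~2 with $\varphi=w_\epsilon\in H_0^1(\Omega)$ and let $\epsilon\to 0$ using the weak convergences from Step~1: this gives $\int_\Omega A_{22}\nabla_{X_2}w_0\cdot\nabla_{X_2}w_0+\beta\|w_0\|_{L^2}^2=\int_\Omega B(\phi u_0)w_0$ without assuming $w_0\in W$, and your $\liminf$ comparison then goes through. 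Proposition~2 and Proposition~3 should be invoked only after Step~3.
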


Admit this\textbf{\ }lemma\textbf{\ }for the moment then we have the
following

\begin{proposition}
Assume (\ref{1}), (\ref{3}), (\ref{2}),(\ref{4}), (\ref{5}), (\ref{34}), let 
$(u_{\epsilon })$ be a sequence of solutions to ($\ref{6}$) such that $%
u_{\epsilon }\rightharpoonup u_{0}$ weakly in $L^{2}(\Omega )$, then we have 
$u_{\epsilon }\rightarrow u_{0}$ in $W$ strongly and 
\begin{equation*}
\dint\limits_{\omega _{2}}A_{22}\nabla _{X_{2}}u_{0}(X_{1},.)\cdot \nabla
_{X_{2}}\varphi dX_{2}+\beta \dint\limits_{\omega _{2}}u_{0}(X_{1},.)\varphi
dX_{2}\text{ }=\dint\limits_{\omega _{2}}B(u_{0})(X_{1},.)\varphi dX_{2},%
\text{ \ }\forall \varphi \in \mathcal{D}(\omega _{2})
\end{equation*}
\end{proposition}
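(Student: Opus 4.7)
My plan is to apply the cut-off Lemma~1 with a well-chosen sequence $\phi_n\in\mathcal{D}(\Omega)$ approaching $1$, and to assemble the pieces through a Cauchy argument in $W$. Concretely, I pick $\phi_n$ with $0\le\phi_n\le 1$ and $|\{\phi_n<1\}|\to 0$, so that with $q=2r/(r-2)$ we have $\|1-\phi_n\|_{L^q(\Omega)}\to 0$. For each $n$, let $w_\epsilon^n\in H_0^1(\Omega)$ denote the solution of the cut-off problem (\ref{12}) with $\phi=\phi_n$.

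The key estimate is obtained by subtracting (\ref{6}) from (\ref{12}), testing with $u_\epsilon-w_\epsilon^n\in H_0^1(\Omega)$, and invoking ellipticity (\ref{3}), the local Lipschitz hypothesis (\ref{2}) (valid because $u_\epsilon$ and $\phi_n u_\epsilon$ remain in a fixed $L^2$-ball by (\ref{26})), and Young's inequality. This produces $\|u_\epsilon-w_\epsilon^n\|_W \le C_1\|(1-\phi_n)u_\epsilon\|_{L^2(\Omega)}$; Hölder's inequality with exponents $r/2$ and $r/(r-2)$, together with the uniform $L^r$-bound from Proposition~1, yields $\|(1-\phi_n)u_\epsilon\|_{L^2(\Omega)} \le \|1-\phi_n\|_{L^q(\Omega)}\|u_\epsilon\|_{L^r(\Omega)} \le C_2\|1-\phi_n\|_{L^q(\Omega)}$, a bound independent of $\epsilon$.

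I then use the triangle inequality $\|u_\epsilon-u_{\epsilon'}\|_W\le\|u_\epsilon-w_\epsilon^n\|_W+\|w_\epsilon^n-w_{\epsilon'}^n\|_W+\|w_{\epsilon'}^n-u_{\epsilon'}\|_W$: given $\eta>0$, I first choose $n$ so that both outer terms are $<\eta/3$ uniformly in $\epsilon,\epsilon'$, then use the strong $W$-convergence $w_\epsilon^n\to w_0^n$ from Lemma~1 to make the middle term $<\eta/3$ for $\epsilon,\epsilon'$ small. Thus $(u_\epsilon)$ is Cauchy in $W$; its limit $u^*$ must coincide with $u_0$ because $u_\epsilon\rightharpoonup u_0$ weakly in $L^2$, so $u_0\in W$ and $u_\epsilon\to u_0$ in $W$ strongly.

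To identify $u_0$ with a solution, I let $\epsilon\to 0$ in the key estimate to obtain $\|u_0-w_0^n\|_W \le C_1C_2\|1-\phi_n\|_{L^q(\Omega)}\to 0$, so $w_0^n\to u_0$ in $W$. I then pass to the limit $n\to\infty$ in the weak formulation for $w_0^n$ given by Lemma~1: the left-hand side converges by strong $W$-convergence, while $B(\phi_n u_0)\to B(u_0)$ in $L^2$ because $\phi_n u_0\to u_0$ in $L^2$ (dominated convergence) and $B$ is locally Lipschitz. This produces the global weak formulation for $u_0$, from which the slice version for a.e.\ $X_1$ follows via Proposition~4 applied with $v=u_0$ and $f=B(u_0)$. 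The main obstacle is securing the $\epsilon$-uniform estimate in the second step; it relies crucially on $r>2$ strictly, since only then does $\|1-\phi_n\|_{L^q}\to 0$ translate, through Hölder, into smallness of $\|(1-\phi_n)u_\epsilon\|_{L^2}$ that is uniform in $\epsilon$. A secondary subtlety is that the Cauchy argument must precede the identification $u_0\in W$, because $W$-membership is itself part of the conclusion.
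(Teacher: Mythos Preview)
Your proof is correct and follows the same overall scaffold as the paper: introduce truncations $\phi_n\to 1$ in $L^{2r/(r-2)}$, solve the cut-off problems (\ref{12}) with $\phi=\phi_n$, establish the uniform-in-$\epsilon$ estimate $\|u_\epsilon-w_\epsilon^n\|_W\le C\|1-\phi_n\|_{L^{2r/(r-2)}}$ (this is exactly the paper's Proposition~5), and invoke Lemma~1 for $w_\epsilon^n\to w_0^n$ in $W$.

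Where you diverge from the paper is in assembling these ingredients. The paper first shows that the sequence $(w_0^n)_n$ is bounded in $W$, extracts a weak limit $w_0$, and then upgrades to strong convergence $\nabla_{X_2}w_0^n\to\nabla_{X_2}w_0$ through an energy argument (the chain of inequalities (\ref{17})--(\ref{21}), which requires taking $\varphi=w_\epsilon^n$ in (\ref{14}) and (\ref{16}) and letting $\epsilon\to 0$); only afterwards is $w_0$ identified with $u_0$ via Proposition~2. You instead run a Cauchy argument directly on $(u_\epsilon)$ in $W$, exploiting completeness of $W$, and then obtain $w_0^n\to u_0$ simply by passing to the limit $\epsilon\to 0$ in the uniform estimate. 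This bypasses the energy computation entirely and is a bit more economical. The paper's route, on the other hand, makes the structure of the limit equation for $w_0$ explicit before identifying it with $u_0$, which is perhaps more transparent if one is also interested in the intermediate objects. One small slip: in your final sentence you cite ``Proposition~4'' for the passage from the global weak formulation to the slicewise one; in the paper's numbering this is Proposition~3 (Proposition~4 is the very statement you are proving).
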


\subsection{\protect\bigskip Proof of Proposition 4}

\subsubsection{\textbf{Approximation by truncations}}

Let $(u_{\epsilon })$ be a sequence in $H_{0}^{1}(\Omega )$ of solutions to (%
\ref{6}), assume (\ref{1}), (\ref{3}) and define $w_{\epsilon }^{n}\in
H_{0}^{1}(\Omega )$ the unique solution ( by Lax-Milgram theorem) to the
problem

\begin{equation}
\dint\limits_{\Omega }A_{\epsilon }\nabla w_{\epsilon }^{n}\cdot \nabla
\varphi +\beta \dint\limits_{\Omega }w_{\epsilon }^{n}\varphi
=\dint\limits_{\Omega }B(\phi _{n}u_{\epsilon })\varphi \text{, \ }\forall
\varphi \in \mathcal{D}(\Omega ),  \label{13}
\end{equation}

where $(\phi _{n})$ is a sequence in $\mathcal{D}(\Omega )$ which converges
to $1$ in $L^{\frac{2r}{r-2}}(\Omega ).$

\begin{proposition}
Suppose (\ref{1}), (\ref{3}), (\ref{2}), (\ref{4}) then we have 
\begin{equation*}
\left\Vert \nabla _{X_{2}}w_{\epsilon }^{n}-\nabla _{X_{2}}u_{\epsilon
}\right\Vert _{L^{2}(\Omega )}\rightarrow 0
\end{equation*}%
as $n\rightarrow \infty $ uniformly on $\epsilon $
\end{proposition}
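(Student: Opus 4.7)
The plan is to subtract the weak formulations, test with the difference, and exploit the uniform $L^{r}$-bound on $u_{\epsilon}$ given by \textbf{Proposition 1} together with assumption (\ref{2}) on $B$.

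First I would take the difference of (\ref{13}) and (\ref{6}) to get, for every $\varphi \in \mathcal{D}(\Omega)$,
\begin{equation*}
\dint\limits_{\Omega }A_{\epsilon }\nabla (w_{\epsilon }^{n}-u_{\epsilon })\cdot \nabla \varphi \,dx+\beta \dint\limits_{\Omega }(w_{\epsilon }^{n}-u_{\epsilon })\varphi \,dx=\dint\limits_{\Omega }\bigl(B(\phi _{n}u_{\epsilon })-B(u_{\epsilon })\bigr)\varphi \,dx.
\end{equation*}
By density we may take $\varphi =w_{\epsilon }^{n}-u_{\epsilon }\in H_{0}^{1}(\Omega )$; using the ellipticity (\ref{3}) and Cauchy--Schwarz on the right-hand side gives
\begin{equation*}
\lambda \epsilon ^{2}\Vert \nabla _{X_{1}}(w_{\epsilon }^{n}-u_{\epsilon })\Vert _{L^{2}(\Omega )}^{2}+\lambda \Vert \nabla _{X_{2}}(w_{\epsilon }^{n}-u_{\epsilon })\Vert _{L^{2}(\Omega )}^{2}+\beta \Vert w_{\epsilon }^{n}-u_{\epsilon }\Vert _{L^{2}(\Omega )}^{2}\leq \Vert B(\phi _{n}u_{\epsilon })-B(u_{\epsilon })\Vert _{L^{2}(\Omega )}\,\Vert w_{\epsilon }^{n}-u_{\epsilon }\Vert _{L^{2}(\Omega )}.
\end{equation*}
Dropping the two gradient terms on the left gives $\beta \Vert w_{\epsilon }^{n}-u_{\epsilon }\Vert _{L^{2}(\Omega )}\leq \Vert B(\phi _{n}u_{\epsilon })-B(u_{\epsilon })\Vert _{L^{2}(\Omega )}$, and feeding this back into the previous inequality yields
\begin{equation*}
\Vert \nabla _{X_{2}}(w_{\epsilon }^{n}-u_{\epsilon })\Vert _{L^{2}(\Omega )}^{2}\leq \tfrac{1}{\lambda \beta }\Vert B(\phi _{n}u_{\epsilon })-B(u_{\epsilon })\Vert _{L^{2}(\Omega )}^{2}.
\end{equation*}
So it suffices to show that $\Vert B(\phi _{n}u_{\epsilon })-B(u_{\epsilon })\Vert _{L^{2}(\Omega )}\rightarrow 0$ as $n\rightarrow \infty $ \emph{uniformly in} $\epsilon $.

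The main step is to control $\phi _{n}u_{\epsilon }-u_{\epsilon }$ uniformly in $\epsilon $ using H\"{o}lder's inequality with the conjugate exponents $r$ and $\tfrac{2r}{r-2}$ (they satisfy $\tfrac{1}{r}+\tfrac{r-2}{2r}=\tfrac{1}{2}$):
\begin{equation*}
\Vert \phi _{n}u_{\epsilon }-u_{\epsilon }\Vert _{L^{2}(\Omega )}\leq \Vert \phi _{n}-1\Vert _{L^{2r/(r-2)}(\Omega )}\,\Vert u_{\epsilon }\Vert _{L^{r}(\Omega )}.
\end{equation*}
By \textbf{Proposition 1} the factor $\Vert u_{\epsilon }\Vert _{L^{r}(\Omega )}$ is bounded by a constant independent of $\epsilon $, while by construction $\Vert \phi _{n}-1\Vert _{L^{2r/(r-2)}(\Omega )}\rightarrow 0$. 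In particular the family $\{u_{\epsilon }\}_{\epsilon }\cup \{\phi _{n}u_{\epsilon }\}_{n,\epsilon }$ is a bounded subset $E$ of $L^{2}(\Omega )$. Applying the local Lipschitz assumption (\ref{2}) on $E$ then gives
\begin{equation*}
\Vert B(\phi _{n}u_{\epsilon })-B(u_{\epsilon })\Vert _{L^{2}(\Omega )}\leq K_{E}\Vert \phi _{n}u_{\epsilon }-u_{\epsilon }\Vert _{L^{2}(\Omega )}\leq K_{E}\,\Vert \phi _{n}-1\Vert _{L^{2r/(r-2)}(\Omega )}\,\sup_{\epsilon }\Vert u_{\epsilon }\Vert _{L^{r}(\Omega )},
\end{equation*}
and the right-hand side tends to $0$ as $n\rightarrow \infty $ independently of $\epsilon $, which combined with the preceding display finishes the proof.

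The only delicate point is noticing that the natural uniform-in-$\epsilon $ estimate on $\Vert u_{\epsilon }\Vert _{L^{2}(\Omega )}$ is not strong enough to obtain uniform convergence of $\phi _{n}u_{\epsilon }$; one really has to invoke the $L^{r}$-regularity of \textbf{Proposition 1} and the precise choice of topology $L^{2r/(r-2)}$ in which $(\phi _{n})$ approximates $1$.
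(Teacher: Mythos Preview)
Your proof is correct and follows essentially the same approach as the paper: subtract the two equations, test with the difference, use ellipticity and the local Lipschitz bound on $B$, and invoke the uniform $L^{r}$-bound from \textbf{Proposition 1} together with H\"older's inequality with exponents $r$ and $\tfrac{2r}{r-2}$. The only cosmetic difference is that you absorb $\Vert w_{\epsilon }^{n}-u_{\epsilon }\Vert_{L^{2}}$ using the $\beta$-term, whereas the paper uses the Poincar\'e inequality in the $X_{2}$-direction to absorb it into $\Vert\nabla_{X_{2}}(w_{\epsilon }^{n}-u_{\epsilon })\Vert_{L^{2}}$; either closure works.
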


\begin{proof}
\bigskip Subtracting (\ref{6}) from (\ref{13}) and taking $\varphi =$ $%
(w_{\epsilon }^{n}-u_{\epsilon })\in H_{0}^{1}(\Omega )$ we get 
\begin{multline*}
\dint\limits_{\Omega }A_{\epsilon }\nabla (w_{\epsilon }^{n}-u_{\epsilon
})\cdot \nabla (w_{\epsilon }^{n}-u_{\epsilon })dx+\beta
\dint\limits_{\Omega }(w_{\epsilon }^{n}-u_{\epsilon })^{2}dx \\
=\dint\limits_{\Omega }\left( B(\phi _{n}u_{\epsilon })-B(u_{\epsilon
})\right) (w_{\epsilon }^{n}-u_{\epsilon })dx
\end{multline*}%
By (\ref{3}) and H\"{o}lder's inequality we derive%
\begin{equation*}
\lambda \left\Vert \nabla _{X_{2}}(w_{\epsilon }^{n}-u_{\epsilon
})\right\Vert _{L^{2}(\Omega )}^{2}\leq \left\Vert \left( B(\phi
_{n}u_{\epsilon })-B(u_{\epsilon })\right) \right\Vert _{L^{2}}\left\Vert
w_{\epsilon }^{n}-u_{\epsilon }\right\Vert _{L^{2}},
\end{equation*}

and \textbf{Proposition 1 }gives $\left\Vert u_{\epsilon }\right\Vert
_{L^{2}(\Omega )}\leq \frac{M\left\vert \Omega \right\vert ^{\frac{1}{2}-%
\frac{1}{r}}}{\beta -M\left\vert \Omega \right\vert ^{\frac{1}{2}-\frac{1}{r}%
}},$ $\left\Vert \phi _{n}u_{\epsilon }\right\Vert _{L^{2}(\Omega )}\leq 
\frac{M}{\beta -M\left\vert \Omega \right\vert ^{\frac{1}{2}-\frac{1}{r}}}%
\left\Vert \phi _{n}\right\Vert _{L^{\frac{2r}{r-2}}(\Omega )}$, we note $K$
the Lipschitz coefficient of $B$ associated with the bounded set 
\begin{equation*}
\left\{ u\in L^{2}(\Omega ):\left\Vert u\right\Vert _{L^{2}}\leq \underset{n}%
{\sup }(\frac{M\left\vert \Omega \right\vert ^{\frac{1}{2}-\frac{1}{r}}}{%
\beta -M\left\vert \Omega \right\vert ^{\frac{1}{2}-\frac{1}{r}}},\frac{M}{%
\beta -M\left\vert \Omega \right\vert ^{\frac{1}{2}-\frac{1}{r}}}\left\Vert
\phi _{n}\right\Vert _{L^{\frac{2r}{r-2}}})<\infty \right\} ,
\end{equation*}%
$,$

whence (\ref{2}) and H\"{o}lder's inequality give 
\begin{equation*}
\left\Vert \nabla _{X_{2}}(w_{\epsilon }^{n}-u_{\epsilon })\right\Vert
_{L^{2}(\Omega )}^{2}\leq \frac{K}{\lambda }\left\Vert \phi
_{n}-1\right\Vert _{L^{\frac{2r}{r-2}}}\left\Vert u_{\epsilon }\right\Vert
_{L^{r}}\left\Vert w_{\epsilon }^{n}-u_{\epsilon }\right\Vert _{L^{2}}
\end{equation*}

And finally by \textbf{Proposition 1} and Poincar\'{e}'s inequality in the $%
X_{2}$ direction we get 
\begin{equation*}
\left\Vert \nabla _{X_{2}}(w_{\epsilon }^{n}-u_{\epsilon })\right\Vert
_{L^{2}(\Omega )}\leq \frac{C^{\prime }KM}{\lambda (\beta -M\left\vert
\Omega \right\vert ^{\frac{1}{2}-\frac{1}{r}})}\left\Vert \phi
_{n}-1\right\Vert _{L^{\frac{2r}{r-2}}}
\end{equation*}%
\textbf{\ }

Whence $\left\Vert \nabla _{X_{2}}(w_{\epsilon }^{n}-u_{\epsilon
})\right\Vert _{L^{2}(\Omega )}\rightarrow 0$ as $n\rightarrow \infty $
uniformly in $\epsilon $
\end{proof}

\subsubsection{\textbf{The convergence}}

Fix $n$ ,under assumptions of \textbf{Proposition 4} then it follows by 
\textbf{Lemma 1} that there exists $w_{0}^{n}\in W$ such that 
\begin{equation}
w_{\epsilon }^{n}\rightarrow w_{0}^{n}\text{ strongly in }W  \label{22}
\end{equation}

\ and $w_{0}^{n}$ is the unique function in $W$ which satisfies

\begin{equation}
\dint\limits_{\Omega }A_{22}\nabla _{X_{2}}w_{0}^{n}\cdot \nabla
_{X_{2}}\varphi dx+\beta \dint\limits_{\Omega }w_{0}^{n}\varphi
dx=\dint\limits_{\Omega }B(\phi _{n}u_{0})\varphi dx\text{, \ }\forall
\varphi \in \mathcal{D}(\Omega ),  \label{14}
\end{equation}

and for a.e $X_{1}$ we have 
\begin{eqnarray}
&&\dint\limits_{\omega _{2}}A_{22}\nabla _{X_{2}}w_{0}^{n}(X_{1},.)\cdot
\nabla _{X_{2}}\varphi dX_{2}+\beta \dint\limits_{\omega
_{2}}w_{0}^{n}(X_{1},.)\varphi dX_{2}  \label{15} \\
&=&\dint\limits_{\omega _{2}}B(\phi _{n}u_{0})(X_{1},.)\varphi dX_{2},\text{
\ }\forall \varphi \in \mathcal{D}(\omega _{2})  \notag
\end{eqnarray}

For a.e $X_{1}$ taking $\varphi =w_{0}^{n}(X_{1},.)\in H_{0}^{1}(\omega
_{2}) $ in (\ref{15}), by ellipticity assumption (\ref{3}), H\"{o}lder's
inequality we obtain 
\begin{equation*}
\lambda \dint\limits_{\omega _{2}}\left\vert \nabla
_{X_{2}}w_{0}^{n}(X_{1},.)\right\vert ^{2}dX_{2}\leq \left\Vert B(\phi
_{n}u_{0})(X_{1},.)\right\Vert _{L^{2}(\omega _{2})}\left\Vert
w_{0}^{n}(X_{1},.)\right\Vert _{L^{2}(\omega _{2})},
\end{equation*}

and Poincar\'{e}'s inequality in the $X_{2}$ direction gives 
\begin{eqnarray*}
\left\Vert \nabla _{X_{2}}w_{0}^{n}(X_{1},.)\right\Vert _{L^{2}(\omega
_{2})} &\leq &\frac{C^{\prime }}{\lambda }\left\Vert B(\phi
_{n}u_{0})(X_{1},.)\right\Vert _{L^{2}(\omega _{2})} \\
\left\Vert w_{0}^{n}(X_{1},.)\right\Vert _{L^{2}(\omega _{2})} &\leq &\frac{%
C^{\prime 2}}{\lambda }\left\Vert B(\phi _{n}u_{0})(X_{1},.)\right\Vert
_{L^{2}(\omega _{2})}
\end{eqnarray*}

integrating over $\omega _{1}$ yields%
\begin{eqnarray*}
\left\Vert \nabla _{X_{2}}w_{0}^{n}\right\Vert _{L^{2}(\Omega )} &\leq &%
\frac{C^{\prime }}{\lambda }\left\Vert B(\phi _{n}u_{0})\right\Vert
_{L^{2}(\Omega )}, \\
\left\Vert w_{0}^{n}\right\Vert _{L^{2}(\Omega )} &\leq &\frac{C^{\prime 2}}{%
\lambda }\left\Vert B(\phi _{n}u_{0})\right\Vert _{L^{2}(\Omega )},
\end{eqnarray*}

and by (\ref{4}) and H\"{o}lder's inequality (remark that $u_{0}\in
L^{r}(\Omega )$ since $(u_{\epsilon })$ is bounded in $L^{r}(\Omega )$ and $%
u_{\epsilon }\rightharpoonup u_{0}$ in $L^{2}(\Omega )$) we obtain 
\begin{eqnarray*}
\left\Vert \nabla _{X_{2}}w_{0}^{n}\right\Vert _{L^{2}(\Omega )} &\leq &%
\frac{C\left\vert \Omega \right\vert ^{\frac{1}{2}-\frac{1}{r}}M\left(
+\left\Vert \phi _{n}\right\Vert _{L^{\frac{2r}{r-2}}}\left\Vert
u_{0}\right\Vert _{L^{r}}\right) }{\lambda }, \\
\left\Vert w_{0}^{n}\right\Vert _{L^{2}(\Omega )} &\leq &\frac{%
C^{2}\left\vert \Omega \right\vert ^{\frac{1}{2}-\frac{1}{r}}M\left(
+\left\Vert \phi _{n}\right\Vert _{L^{\frac{2r}{r-2}}}\left\Vert
u_{0}\right\Vert _{L^{r}}\right) }{\lambda },
\end{eqnarray*}

(we note that The the right hand sides of the previous inequality is
uniformly bounded). Using weak compacity in $L^{2}(\Omega )$, one can
extract a subsequence noted always $(w_{0}^{n})$ which converges weakly to
some $w_{0}\in L^{2}(\Omega )$ and such that $\nabla
_{X_{2}}w_{0}^{n}\rightharpoonup \nabla _{X_{2}}w_{0}$ weakly. Now, passing
to the limit as $n\rightarrow \infty $ in (\ref{14}) and using 
\begin{equation}
\left\Vert B(\phi _{n}u_{0})-B(u_{0})\right\Vert _{L^{2}(\Omega )}\leq
K\left\Vert \phi _{n}-1\right\Vert _{L^{\frac{2r}{r-2}}}\left\Vert
u_{0}\right\Vert _{L^{r}(\Omega )}  \label{38}
\end{equation}%
we get%
\begin{equation}
\dint\limits_{\Omega }A_{22}\nabla _{X_{2}}w_{0}\cdot \nabla _{X_{2}}\varphi
dx+\beta \dint\limits_{\Omega }w_{0}\varphi dx=\dint\limits_{\Omega
}B(u_{0})\varphi dx\text{, \ }\forall \varphi \in \mathcal{D}(\Omega )
\label{16}
\end{equation}%
Now we will prove that $\nabla _{X_{2}}w_{0}^{n}\rightarrow \nabla
_{X_{2}}w_{0}$ in $L^{2}(\Omega )$ strongly, using ellipticity assumption (%
\ref{3}) we obtain 
\begin{eqnarray}
&&\lambda \left\Vert \nabla _{X_{2}}(w_{0}^{n}-w_{0})\right\Vert
_{L^{2}(\Omega )}^{2}\leq  \label{17} \\
&&\dint\limits_{\Omega }A_{22}\nabla _{X_{2}}(w_{0}^{n}-w_{0})\cdot \nabla
_{X_{2}}(w_{0}^{n}-w_{0})dx+\beta \left\Vert w_{0}^{n}-w_{0}\right\Vert
_{L^{2}(\Omega )}^{2}  \notag \\
&\leq &\dint\limits_{\Omega }A_{22}\nabla _{X_{2}}w_{0}^{n}\cdot \nabla
_{X_{2}}w_{0}^{n}dx-\dint\limits_{\Omega }A_{22}\nabla
_{X_{2}}w_{0}^{n}\cdot \nabla _{X_{2}}w_{0}dx-\dint\limits_{\Omega
}A_{22}\nabla _{X_{2}}w_{0}\cdot \nabla _{X_{2}}w_{0}^{n}dx  \notag \\
&&+\dint\limits_{\Omega }A_{22}\nabla _{X_{2}}w_{0}\cdot \nabla
_{X_{2}}w_{0}dx+\beta \left\Vert w_{0}^{n}-w_{0}\right\Vert _{L^{2}(\Omega
)}^{2}  \notag
\end{eqnarray}%
Taking $\varphi =w_{\epsilon }^{n}$ $\in H_{0}^{1}(\Omega )$ in (\ref{14})
and (\ref{16}) and letting $\epsilon \rightarrow 0$ we get (thanks to (\ref%
{22})) 
\begin{equation}
\dint\limits_{\Omega }A_{22}\nabla _{X_{2}}w_{0}^{n}\cdot \nabla
_{X_{2}}w_{0}^{n}dx+\beta \dint\limits_{\Omega }\left\vert
w_{0}^{n}\right\vert ^{2}dx=\dint\limits_{\Omega }B(\phi
_{n}u_{0})w_{0}^{n}dx,  \label{18}
\end{equation}%
and%
\begin{equation}
\dint\limits_{\Omega }A_{22}\nabla _{X_{2}}w_{0}\cdot \nabla
_{X_{2}}w_{0}^{n}dx+\beta \dint\limits_{\Omega
}w_{0}w_{0}^{n}dx=\dint\limits_{\Omega }B(u_{0})w_{0}^{n}dx  \label{19}
\end{equation}%
Replacing (\ref{18}) and (\ref{19}) in (\ref{17}) we get 
\begin{eqnarray}
&&\lambda \left\Vert \nabla _{X_{2}}(w_{0}^{n}-w_{0})\right\Vert
_{L^{2}(\Omega )}^{2}  \label{20} \\
&\leq &\dint\limits_{\Omega }B(\phi
_{n}u_{0})w_{0}^{n}dx-\dint\limits_{\Omega
}B(u_{0})w_{0}^{n}dx-\dint\limits_{\Omega }A_{22}\nabla
_{X_{2}}w_{0}^{n}\cdot \nabla _{X_{2}}w_{0}dx  \notag \\
&&+\dint\limits_{\Omega }A_{22}\nabla _{X_{2}}w_{0}\cdot \nabla
_{X_{2}}w_{0}dx+\beta \dint\limits_{\Omega }\left\vert w_{0}\right\vert
^{2}dx-\beta \dint\limits_{\Omega }w_{0}w_{0}^{n}dx  \notag
\end{eqnarray}

We have $B(\phi _{n}u_{0})\rightarrow B(u_{0})$ in $L^{2}(\Omega )$ and
since $w_{0}^{n}\rightharpoonup w_{0}$ in $L^{2}(\Omega )$ then%
\begin{equation*}
\dint\limits_{\Omega }B(\phi _{n}u_{0})w_{0}^{n}dx\rightarrow
\dint\limits_{\Omega }B(u_{0})w_{0}dx
\end{equation*}

And since $\nabla _{X_{2}}w_{0}^{n}\rightharpoonup \nabla _{X_{2}}w_{0}$ in $%
L^{2}(\Omega )$ then $A_{22}\nabla _{X_{2}}w_{0}^{n}\rightharpoonup
A_{22}\nabla _{X_{2}}w_{0}$ in $L^{2}(\Omega )$ (since $A_{22}\in L^{\infty
}(\Omega )$). Now, let $n\rightarrow \infty $ in (\ref{20}) we get%
\begin{equation}
\left\Vert \nabla _{X_{2}}(w_{0}^{n}-w_{0})\right\Vert _{L^{2}(\Omega
)}\rightarrow 0  \label{21}
\end{equation}

Thanks to the uniform convergence proved in \textbf{proposition 5}, (\ref{21}%
) and (\ref{22}), we show by the triangular inequality that $\nabla
_{X_{2}}u_{\epsilon }\rightarrow \nabla _{X_{2}}w_{0}$ in $L^{2}(\Omega )$.
Now, we must check that $w_{0}=u_{0}$, according to \textbf{Proposition 2, }%
we have for a.e $X_{1},$\ $w_{0}(X_{1},.)\in H_{0}^{1}(\omega _{2})$ and $%
u_{\epsilon }\rightarrow w_{0}$ in $L^{2}(\Omega )$ , and therefore $%
w_{0}=u_{0}$. By (\ref{16}), we obtain%
\begin{equation*}
\dint\limits_{\Omega }A_{22}\nabla _{X_{2}}u_{0}\cdot \nabla _{X_{2}}\varphi
dx+\beta \dint\limits_{\Omega }u_{0}\varphi dx=\dint\limits_{\Omega
}B(u_{0})\varphi dx\text{, \ }\forall \varphi \in \mathcal{D}(\Omega ),
\end{equation*}

and we finish the proof of \textbf{proposition 4} by using\textbf{\
proposition 3. }Finally, if $(u_{\epsilon })$ is a sequence of solutions to (%
\ref{6}) then there exists a subsequence $(u_{\epsilon _{k}})$ which
converges to some $u_{0}$ in $L^{2}(\Omega )$ weakly ( see subsection 2.2),
whence \textbf{Theorem3 }follows from \textbf{Proposition 4}. Now, it
remains to prove \textbf{Lemma 1} which will be the subject of the next
section.

\section{Proof of Lemma 1}

Before starting , let us give some tools. For $n\in 
%TCIMACRO{\U{2115} }%
%BeginExpansion
\mathbb{N}
%EndExpansion
^{\ast }$ we note $\Delta _{n}=(I-n^{-1}\Delta )^{-1}$ the resolvent of the
Dirichlet Laplacian on $L^{2}(\Omega )$, this is a compact operator as well
known. Given $f\in L^{2}(\Omega )$ and we note $U_{n}=(I-n^{-1}\Delta )^{-1}f
$ \ , $U_{n}$ is the unique weak solution to the singularly perturbed
problem: 
\begin{equation*}
-\frac{1}{n}\Delta U_{n}+U_{n}=f,
\end{equation*}

we have the

\begin{theorem}
(see \cite{6}): If $f$ $\in H_{0}^{1}(\Omega )$ then : $\left\Vert
U_{n}-f\right\Vert _{L^{2}(\Omega )}\leq C_{\Omega }n^{-\frac{1}{4}%
}\left\Vert f\right\Vert _{H^{1}(\Omega )}$
\end{theorem}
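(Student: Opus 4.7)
The plan is a direct energy argument using $U_n - f$ itself as a test function. The equation $-n^{-1}\Delta U_n + U_n = f$ reads weakly as
\[
\int_\Omega \tfrac{1}{n}\nabla U_n\cdot\nabla v\,dx + \int_\Omega U_n v\,dx = \int_\Omega f v\,dx, \quad \forall v\in H_0^1(\Omega).
\]
Since $U_n\in H_0^1(\Omega)$ by construction and $f\in H_0^1(\Omega)$ by hypothesis, the difference $v:=U_n-f$ lies in $H_0^1(\Omega)$ and is admissible. Substituting and combining the last two integrals gives
\[
\tfrac{1}{n}\int_\Omega \nabla U_n\cdot\nabla(U_n-f)\,dx + \|U_n-f\|_{L^2(\Omega)}^2 = 0.
\]

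Next I would decompose $\nabla U_n = \nabla(U_n-f) + \nabla f$ in the first integrand, producing
\[
\|U_n-f\|_{L^2(\Omega)}^2 + \tfrac{1}{n}\|\nabla(U_n-f)\|_{L^2(\Omega)}^2 = -\tfrac{1}{n}\int_\Omega \nabla f\cdot\nabla(U_n-f)\,dx.
\]
Cauchy--Schwarz on the right followed by Young's inequality $ab\leq \tfrac12a^2+\tfrac12b^2$ absorbs the gradient term on the left and yields
\[
\|U_n-f\|_{L^2(\Omega)}^2 \leq \tfrac{1}{2n}\|\nabla f\|_{L^2(\Omega)}^2 \leq \tfrac{1}{2n}\|f\|_{H^1(\Omega)}^2.
\]
Thus $\|U_n-f\|_{L^2(\Omega)}\leq (2n)^{-1/2}\|f\|_{H^1(\Omega)}$, which is in fact sharper than the claimed rate; since $n^{-1/2}\leq n^{-1/4}$ for $n\geq 1$, the stated estimate follows a fortiori with $C_\Omega = 1/\sqrt 2$.

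The only delicate step is justifying that $U_n-f$ is an admissible test function, and this is exactly where the hypothesis $f\in H_0^1(\Omega)$ (rather than merely $f\in H^1(\Omega)$) is used: the trace of $U_n-f$ on $\partial\Omega$ vanishes only when both $U_n$ and $f$ vanish there. If one wished to avoid the ``absorption'' trick, a spectral-decomposition variant works just as well: expanding $f=\sum_k \hat f_k e_k$ in Dirichlet eigenfunctions gives $U_n-f = -\sum_k \frac{\lambda_k/n}{1+\lambda_k/n}\hat f_k\,e_k$, and the elementary inequality $\bigl(\tfrac{t}{1+t}\bigr)^2\leq t$ for $t\geq 0$ reproduces $\|U_n-f\|_{L^2}^2\leq n^{-1}\sum_k\lambda_k|\hat f_k|^2 = n^{-1}\|\nabla f\|_{L^2}^2$. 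The energy method is preferable because it needs no spectral theory and no regularity of $\partial\Omega$ beyond what the weak formulation already requires.
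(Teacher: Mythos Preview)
Your argument is correct and complete. The paper itself does not prove this statement at all: it is quoted as a known result from Lions' monograph (the parenthetical ``see \cite{6}'' is the entire treatment), so there is no in-paper proof to compare against. Your energy method is the standard one-line derivation, and you are right that it actually yields the rate $n^{-1/2}$ rather than $n^{-1/4}$; the weaker exponent stated in the paper is more than sufficient for the application in \textbf{Proposition 6}, where only \emph{some} uniform-in-$\epsilon$ decay in $n$ is needed. Your remark that the hypothesis $f\in H_0^1(\Omega)$ (not merely $H^1$) is precisely what legitimizes the test function $U_n-f$ is the key observation, and the spectral alternative you sketch is a nice sanity check but, as you say, unnecessary.
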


The following lemma will be used in the approximation

\begin{lemma}
For any functions $g\in H_{loc}^{1}(\Omega )\cap L^{2}(\Omega )$ , $\phi \in 
\mathcal{D}(\Omega )$ we have : $\phi g\in H_{0}^{1}(\Omega )$ and moreover
there exists $\Omega ^{\prime }\subset \subset \Omega :$ $\left\Vert \phi
g\right\Vert _{H^{1}(\Omega )}\leq C_{\phi }\left\Vert g\right\Vert
_{H^{1}(\Omega ^{\prime })}$
\end{lemma}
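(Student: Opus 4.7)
The plan is to pick an intermediate open set $\Omega'$ with $\operatorname{supp}(\phi)\subset \Omega'\subset\subset\Omega$, use local regularity of $g$ on $\Omega'$ to make sense of the product $\phi g$ as an $H^1$ function, and then combine compact support of $\phi$ with the Leibniz rule to deliver both the $H^1_0$ membership and the norm estimate.

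First I would verify that $\phi g$ is in $H^1(\Omega)$ with the claimed estimate. Since $\phi\in\mathcal{D}(\Omega)$, its support is a compact subset of $\Omega$; choose $\Omega'$ open with $\operatorname{supp}(\phi)\subset\Omega'\subset\subset\Omega$. By hypothesis $g|_{\Omega'}\in H^1(\Omega')$, so on $\Omega'$ the distributional product $\phi g$ satisfies the Leibniz rule $\nabla(\phi g)=\phi\nabla g + g\nabla\phi$. Outside $\operatorname{supp}(\phi)$ the product vanishes, so $\phi g$ extended by zero belongs to $H^1(\Omega)$ and
\[
\|\phi g\|_{L^2(\Omega)}\le \|\phi\|_{L^\infty}\|g\|_{L^2(\Omega')},\quad \|\nabla(\phi g)\|_{L^2(\Omega)}\le \|\phi\|_{L^\infty}\|\nabla g\|_{L^2(\Omega')}+\|\nabla\phi\|_{L^\infty}\|g\|_{L^2(\Omega')},
\]
which yields $\|\phi g\|_{H^1(\Omega)}\le C_\phi\|g\|_{H^1(\Omega')}$ with $C_\phi$ depending only on $\|\phi\|_{L^\infty}$ and $\|\nabla\phi\|_{L^\infty}$.

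Next I would upgrade $\phi g\in H^1(\Omega)$ to $\phi g\in H^1_0(\Omega)$. The most convenient route is mollification on the intermediate set $\Omega'$: pick $\Omega''$ with $\operatorname{supp}(\phi)\subset\Omega''\subset\subset\Omega'$, let $\rho_k$ be a standard mollifier, and define $g_k=\rho_k\ast g$ on $\Omega''$ for $k$ large enough that the convolution is well defined there. Then $g_k\in C^\infty(\overline{\Omega''})$ and $g_k\to g$ in $H^1(\Omega'')$. Since $\phi\in\mathcal{D}(\Omega)$ has support in $\Omega''$, the products $\phi g_k$ are smooth with compact support in $\Omega$, hence lie in $\mathcal{D}(\Omega)$. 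Applying the $H^1$-estimate from the previous paragraph to $g-g_k$ (in place of $g$) shows $\phi g_k\to\phi g$ in $H^1(\Omega)$, so $\phi g\in H^1_0(\Omega)$.

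There is no real obstacle; the only point that needs care is the density/approximation step, because $g\in H^1_{\mathrm{loc}}(\Omega)$ a priori has no control near $\partial\Omega$, so one cannot mollify globally. Introducing the intermediate cutoff set where $g$ is genuinely $H^1$ and exploiting that $\phi$ already kills everything outside a compact subset of $\Omega'$ is exactly what makes the mollification legitimate and yields the $H^1_0$ conclusion.
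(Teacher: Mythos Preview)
Your argument is correct and is precisely the standard ``direct'' verification the paper alludes to; the paper's own proof consists solely of the sentence ``the proof is direct,'' so your write-up supplies exactly the details that were left implicit (choice of $\Omega'\supset\operatorname{supp}\phi$, Leibniz rule, and mollification on an intermediate set to obtain the $H_0^1$ membership).
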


\begin{proof}
the proof is direct.
\end{proof}

\subsection{Approximation of the cut-off problem by \textbf{regularization}}

Let $(u_{\epsilon })\mathbf{\subset }H_{0}^{1}(\Omega )$ be a sequence of
solution to (\ref{6}) such that $u_{\epsilon }\rightharpoonup u_{0}\in
L^{2}(\Omega )$ weakly, assume (\ref{1}), (\ref{3}), (\ref{2}), (\ref{4}), (%
\ref{5}), (\ref{34}). For $\phi \in \mathcal{D}(\Omega )$ fixed we note $%
w_{\epsilon }^{n}\in H_{0}^{1}(\Omega )$ the unique solution to the
following regularized problem (thanks to assumptions (\ref{1}), (\ref{3})
and Lax-Milgram theorem).%
\begin{equation}
\dint\limits_{\Omega }A_{\epsilon }\nabla w_{\epsilon }^{n}\cdot \nabla
\varphi dx+\beta \dint\limits_{\Omega }w_{\epsilon }^{n}\varphi
dx=\dint\limits_{\Omega }B(\Delta _{n}(\phi u_{\epsilon }))\varphi dx,\text{
\ }\forall \varphi \in \mathcal{D}(\Omega )  \label{25}
\end{equation}

\begin{proposition}
As $n\rightarrow \infty $ we have $:$%
\begin{equation*}
\nabla _{X_{2}}w_{\epsilon }^{n}\rightarrow \nabla _{X_{2}}w_{\epsilon }%
\text{ in }L^{2}(\Omega )\text{ uniformly in }\epsilon \text{ ,}
\end{equation*}

where $w_{\epsilon }$ is the solution to the cut-off problem (\ref{12})
\end{proposition}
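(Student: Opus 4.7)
The plan is to mimic the argument of \textbf{Proposition 5}: subtract the defining equations of $w_{\epsilon}^{n}$ and $w_{\epsilon}$, test against the difference, and exploit ellipticity together with Poincar\'e's inequality in the $X_{2}$ direction. Subtracting (\ref{25}) from (\ref{12}) and taking $\varphi=w_{\epsilon}^{n}-w_{\epsilon}\in H_{0}^{1}(\Omega)$ gives
\begin{equation*}
\dint_{\Omega}A_{\epsilon}\nabla(w_{\epsilon}^{n}-w_{\epsilon})\cdot\nabla(w_{\epsilon}^{n}-w_{\epsilon})\,dx+\beta\dint_{\Omega}(w_{\epsilon}^{n}-w_{\epsilon})^{2}\,dx =\dint_{\Omega}\bigl(B(\Delta_{n}(\phi u_{\epsilon}))-B(\phi u_{\epsilon})\bigr)(w_{\epsilon}^{n}-w_{\epsilon})\,dx,
\end{equation*}
so, by (\ref{3}) and H\"older's inequality,
\begin{equation*}
\lambda\|\nabla_{X_{2}}(w_{\epsilon}^{n}-w_{\epsilon})\|_{L^{2}(\Omega)}^{2}\le \|B(\Delta_{n}(\phi u_{\epsilon}))-B(\phi u_{\epsilon})\|_{L^{2}(\Omega)}\,\|w_{\epsilon}^{n}-w_{\epsilon}\|_{L^{2}(\Omega)}.
\end{equation*}
Using Poincar\'e's inequality in the $X_{2}$ direction then yields
\begin{equation*}
\|\nabla_{X_{2}}(w_{\epsilon}^{n}-w_{\epsilon})\|_{L^{2}(\Omega)}\le \frac{C'}{\lambda}\|B(\Delta_{n}(\phi u_{\epsilon}))-B(\phi u_{\epsilon})\|_{L^{2}(\Omega)}.
\end{equation*}

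Next I would apply the local Lipschitz assumption (\ref{2}) to the term on the right. Since $\Delta_{n}$ is a contraction on $L^{2}(\Omega)$, $\|\Delta_{n}(\phi u_{\epsilon})\|_{L^{2}}\le\|\phi\|_{\infty}\|u_{\epsilon}\|_{L^{2}}$, which together with \textbf{Proposition 1} shows that both $\phi u_{\epsilon}$ and $\Delta_{n}(\phi u_{\epsilon})$ belong to a fixed bounded set of $L^{2}(\Omega)$ independently of $n,\epsilon$. Denoting by $K$ the corresponding Lipschitz constant of $B$ on this set, we obtain
\begin{equation*}
\|\nabla_{X_{2}}(w_{\epsilon}^{n}-w_{\epsilon})\|_{L^{2}(\Omega)}\le \frac{C'K}{\lambda}\,\|\Delta_{n}(\phi u_{\epsilon})-\phi u_{\epsilon}\|_{L^{2}(\Omega)}.
\end{equation*}

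It therefore remains to show that $\|\Delta_{n}(\phi u_{\epsilon})-\phi u_{\epsilon}\|_{L^{2}(\Omega)}\to 0$ as $n\to\infty$ uniformly in $\epsilon$; this is the main (and the only nontrivial) point. The strategy is to combine \textbf{Theorem 7} with the interior estimates \textbf{Theorem 2} and \textbf{Lemma 2}. By \textbf{Lemma 2} there exists $\Omega'\subset\subset\Omega$ (depending only on $\operatorname{supp}\phi$) such that $\phi u_{\epsilon}\in H_{0}^{1}(\Omega)$ with
\begin{equation*}
\|\phi u_{\epsilon}\|_{H^{1}(\Omega)}\le C_{\phi}\|u_{\epsilon}\|_{H^{1}(\Omega')}.
\end{equation*}
By \textbf{Theorem 2}, $\|u_{\epsilon}\|_{H^{1}(\Omega')}\le C_{\Omega'}$ uniformly in $\epsilon$, so $\phi u_{\epsilon}$ is uniformly bounded in $H_{0}^{1}(\Omega)$. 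Hence \textbf{Theorem 7} gives
\begin{equation*}
\|\Delta_{n}(\phi u_{\epsilon})-\phi u_{\epsilon}\|_{L^{2}(\Omega)}\le C_{\Omega}\,n^{-\tfrac{1}{4}}\|\phi u_{\epsilon}\|_{H^{1}(\Omega)}\le \widetilde{C}\,n^{-\tfrac{1}{4}},
\end{equation*}
with $\widetilde{C}$ independent of $\epsilon$. Combining the above bounds produces
\begin{equation*}
\|\nabla_{X_{2}}(w_{\epsilon}^{n}-w_{\epsilon})\|_{L^{2}(\Omega)}\le \frac{C'K\widetilde{C}}{\lambda}\,n^{-\tfrac{1}{4}},
\end{equation*}
which tends to $0$ as $n\to\infty$ uniformly in $\epsilon$, as required. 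The only delicate ingredient is the uniform $H^{1}$-bound on $\phi u_{\epsilon}$, which is exactly where the interior estimates from Section~3 (and hence assumption (\ref{5})) enter in an essential way.
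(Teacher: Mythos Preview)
Your proof is correct and follows essentially the same route as the paper: subtract the two equations, test with the difference, use ellipticity and Poincar\'e in $X_{2}$, then control $\|B(\Delta_{n}(\phi u_{\epsilon}))-B(\phi u_{\epsilon})\|_{L^{2}}$ via the local Lipschitz property of $B$ and the uniform bound $\|\Delta_{n}(\phi u_{\epsilon})-\phi u_{\epsilon}\|_{L^{2}}\le C n^{-1/4}\|\phi u_{\epsilon}\|_{H^{1}}$ from \textbf{Theorem 7}, \textbf{Lemma 2}, and \textbf{Theorem 2}. The paper's argument differs only in the (immaterial) order of these steps and arrives at the same final estimate $\|\nabla_{X_{2}}(w_{\epsilon}^{n}-w_{\epsilon})\|_{L^{2}(\Omega)}\le C'' n^{-1/4}$.
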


\begin{proof}
Subtracting (\ref{12}) from (\ref{25}) and taking $\varphi =(w_{\epsilon
}^{n}-w_{\epsilon })\in H_{0}^{1}(\Omega )$ yields%
\begin{multline*}
\dint\limits_{\Omega }A_{\epsilon }\nabla (w_{\epsilon }^{n}-w_{\epsilon
})\cdot \nabla (w_{\epsilon }^{n}-w_{\epsilon })dx+\beta
\dint\limits_{\Omega }(w_{\epsilon }^{n}-w_{\epsilon })^{2}dx \\
=\dint\limits_{\Omega }\left\{ B(\Delta _{n}(\phi u_{\epsilon }))-B((\phi
u_{\epsilon }))\right\} (w_{\epsilon }^{n}-w_{\epsilon })dx
\end{multline*}

Remark that $(\phi u_{\epsilon })_{\epsilon }$ is bounded in $L^{2}(\Omega )$
(\textbf{Proposition 1}) and it is clear that $(\Delta _{n}(\phi u_{\epsilon
}))_{n,\epsilon }$ is bounded in $L^{2}(\Omega )$ ($\left\Vert \Delta
_{n}(\phi u_{\epsilon })\right\Vert _{L^{2}(\Omega )}\leq \left\Vert \phi
u_{\epsilon }\right\Vert _{L^{2}(\Omega )}$), then by ellipticity assumption
(\ref{3}) and the local Lipschitzness of $B$ (\ref{2}) we get 
\begin{multline*}
\lambda \dint\limits_{\Omega }\left\vert \nabla _{X_{2}}(w_{\epsilon
}^{n}-w_{\epsilon })\right\vert ^{2}dx\leq \dint\limits_{\Omega }\left\{
B(\Delta _{n}(\phi u_{\epsilon }))-B(\phi u_{\epsilon })\right\}
(w_{\epsilon }^{n}-w_{\epsilon })dx \\
\leq K^{\prime }\left( \dint\limits_{\Omega }\left\vert \Delta _{n}(\phi
u_{\epsilon })-\phi u_{\epsilon }\right\vert ^{2}dx\right) ^{\frac{1}{2}%
}\left\Vert w_{\epsilon }^{n}-w_{\epsilon }\right\Vert _{L^{2}(\Omega )}
\end{multline*}

Hence, Poincar\'{e}'s inequality gives 
\begin{equation*}
\text{ }\left\Vert \nabla _{X_{2}}(w_{\epsilon }^{n}-w_{\epsilon
})\right\Vert _{L^{2}(\Omega )}\leq \frac{CK^{\prime }}{\lambda }\left(
\dint\limits_{\Omega }\left\vert \Delta _{n}(\phi u_{\epsilon })-\phi
u_{\epsilon }\right\vert ^{2}dx\right) ^{\frac{1}{2}},
\end{equation*}

and by \textbf{Theorem 7} we get 
\begin{equation*}
\left\Vert \nabla _{X_{2}}(w_{\epsilon }^{n}-w_{\epsilon })\right\Vert
_{L^{2}(\Omega )}\leq \frac{C^{\prime }K}{\lambda }n^{-\frac{1}{4}%
}\left\Vert \phi u_{\epsilon }\right\Vert _{H^{1}(\Omega )},
\end{equation*}

and \textbf{Lemma 2 }gives 
\begin{equation*}
\left\Vert \nabla _{X_{2}}(w_{\epsilon }^{n}-w_{\epsilon })\right\Vert
_{L^{2}(\Omega )}\leq \frac{C^{\prime }K}{\lambda }C_{\phi }n^{-\frac{1}{4}%
}\left\Vert u_{\epsilon }\right\Vert _{H^{1}(\Omega ^{\prime })}
\end{equation*}

So finally by \textbf{Theorem 2} we get%
\begin{equation*}
\left\Vert \nabla _{X_{2}}(w_{\epsilon }^{n}-w_{\epsilon })\right\Vert
_{L^{2}(\Omega )}\leq C"n^{-\frac{1}{4}}
\end{equation*}

where $C"\geq 0$ is independent on $\epsilon $ and $n$
\end{proof}

\subsection{\protect\bigskip \textbf{The convergence}}

\subsubsection{\textbf{Passage to the limit as }$\protect\epsilon %
\rightarrow 0$}

Let $n\in 
%TCIMACRO{\U{2115} }%
%BeginExpansion
\mathbb{N}
%EndExpansion
^{\ast }$ fixed, taking $\varphi =w_{\epsilon }^{n}\in H_{0}^{1}(\Omega )$
in (\ref{25})$,$and estimating using ellipticity assumption (\ref{3}) and (%
\ref{4}) and \textbf{Proposition 1}(as in subsection 2.2) then one can
extract a subsequence $(w_{\epsilon _{k}(n)}^{n})_{k}$ which converges (as $%
\epsilon _{k}(n)\rightarrow 0$) to some $w_{0}^{n}$ in the following
sense\smallskip 
\begin{eqnarray}
w_{\epsilon _{k}(n)}^{n} &\rightharpoonup &w_{0}^{n}\text{ ,}\nabla
_{X_{2}}w_{\epsilon _{k}(n)}^{n}\rightharpoonup \nabla _{X_{2}}w_{0}^{n}%
\text{ and }\epsilon _{k}(n)\nabla _{X_{1}}w_{\epsilon
_{k}(n)}^{n}\rightharpoonup 0  \label{27} \\
&&\text{ in }L^{2}(\Omega )\text{ }  \notag
\end{eqnarray}%
Now passing the limit (as $\epsilon _{k}(n)\rightarrow 0$) in (\ref{25}) we
get\ 
\begin{equation*}
\dint\limits_{\Omega }A_{22}\nabla _{X_{2}}w_{0}^{n}\cdot \nabla
_{X_{2}}\varphi dx+\beta \dint\limits_{\Omega }w_{0}^{n}\varphi dx=\underset{%
\epsilon _{k}(n)\rightarrow 0}{\lim }\dint\limits_{\Omega }B(\Delta
_{n}(\phi u_{\epsilon _{k}(n)}))\varphi dx,\text{ \ }\forall \varphi \in 
\mathcal{D}(\Omega )\smallskip
\end{equation*}%
Since $u_{\epsilon _{k}(n)}\rightharpoonup u_{0}$ weakly in $L^{2}(\Omega )$
then $\phi u_{\epsilon _{k}(n)}\rightharpoonup \phi u_{0}$ weakly in $%
L^{2}(\Omega )$ so by compacity of $\Delta _{n}$ we get $\Delta _{n}(\phi
u_{\epsilon _{k}(n)})\rightarrow \Delta _{n}(\phi u_{0})$ in $L^{2}(\Omega )$
strongly. And therefore, the continuity of $B$ gives $B(\Delta _{n}(\phi
u_{\epsilon _{k}(n)}))\rightarrow B(\Delta _{n}(\phi u_{0}))$ in $%
L^{2}(\Omega )$ strongly, hence the previous equality becomes%
\begin{equation}
\dint\limits_{\Omega }A_{22}\nabla _{X_{2}}w_{0}^{n}\cdot \nabla
_{X_{2}}\varphi dx+\beta \dint\limits_{\Omega }w_{0}^{n}\varphi
dx=\dint\limits_{\Omega }B(\Delta _{n}(\phi u_{0)}))\varphi dx,\text{ \ }%
\forall \varphi \in \mathcal{D}(\Omega )  \label{28}
\end{equation}%
Take $\varphi =w_{\epsilon _{k}(n)}^{n}\in H_{0}^{1}(\Omega )$ in (\ref{28})
and let $\epsilon _{k}(n)\rightarrow 0$ we derive%
\begin{equation}
\dint\limits_{\Omega }A_{22}\nabla _{X_{2}}w_{0}^{n}\cdot \nabla
_{X_{2}}w_{0}^{n}dx+\beta \dint\limits_{\Omega }\left\vert
w_{0}^{n}\right\vert ^{2}dx=\dint\limits_{\Omega }B(\Delta _{n}(\phi
u_{0)}))w_{0}^{n}dx\ \   \label{29}
\end{equation}

Now, we prove strong convergences for the whole sequence (as $\epsilon
\rightarrow 0$)

\begin{proposition}
As $\epsilon \rightarrow 0$ we have $\nabla _{X_{2}}w_{\epsilon
}^{n}\rightarrow \nabla _{X_{2}}w_{0}^{n}$ strongly in $L^{2}(\Omega )$
\end{proposition}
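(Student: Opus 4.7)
I would establish strong convergence via an energy identity combined with a lower-semicontinuity / pinching argument, paralleling the pattern used in the proof of \textbf{Proposition 4} but adapted to handle the $\epsilon$-dependent matrix. The key observation is that $A_{\epsilon}=D_{\epsilon}AD_{\epsilon}$ with $D_{\epsilon}=\mathrm{diag}(\epsilon I_{p},I_{N-p})$, so writing $V_{\epsilon}:=(\epsilon\nabla_{X_{1}}w_{\epsilon}^{n},\nabla_{X_{2}}w_{\epsilon}^{n})=D_{\epsilon}\nabla w_{\epsilon}^{n}$ gives $\int A_{\epsilon}\nabla w_{\epsilon}^{n}\cdot\nabla w_{\epsilon}^{n}=\int A\,V_{\epsilon}\cdot V_{\epsilon}$. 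By (\ref{27}), $V_{\epsilon_{k}(n)}\rightharpoonup V_{0}:=(0,\nabla_{X_{2}}w_{0}^{n})$ weakly in $L^{2}(\Omega)^{N}$.

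\textbf{Main steps.} First, I test (\ref{25}) with $\varphi=w_{\epsilon}^{n}\in H_{0}^{1}(\Omega)$ to obtain the energy identity
$$\int A\,V_{\epsilon}\cdot V_{\epsilon}\,dx+\beta\|w_{\epsilon}^{n}\|_{L^{2}}^{2}=\int B(\Delta_{n}(\phi u_{\epsilon}))\,w_{\epsilon}^{n}\,dx.$$
Second, I pass to the limit in the right-hand side: compactness of $\Delta_{n}$ together with $u_{\epsilon}\rightharpoonup u_{0}$ in $L^{2}$ give $\Delta_{n}(\phi u_{\epsilon})\to\Delta_{n}(\phi u_{0})$ strongly, hence by continuity of $B$, $B(\Delta_{n}(\phi u_{\epsilon}))\to B(\Delta_{n}(\phi u_{0}))$ strongly in $L^{2}$; combined with $w_{\epsilon}^{n}\rightharpoonup w_{0}^{n}$, the right-hand side converges to $\int B(\Delta_{n}(\phi u_{0}))w_{0}^{n}$, which by (\ref{29}) equals $\int A\,V_{0}\cdot V_{0}+\beta\|w_{0}^{n}\|_{L^{2}}^{2}$. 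Third, the functionals $V\mapsto\int A\,V\cdot V$ and $w\mapsto\|w\|_{L^{2}}^{2}$ are convex and continuous on $L^{2}$, hence weakly lower semicontinuous, so
$$\liminf\int A\,V_{\epsilon}\cdot V_{\epsilon}\geq\int A\,V_{0}\cdot V_{0},\qquad\liminf\|w_{\epsilon}^{n}\|^{2}\geq\|w_{0}^{n}\|^{2}.$$
Since the sum of the liminfs is at most the liminf of the sum, which equals the limit computed above, a pinching argument forces both inequalities to be equalities and both liminfs to be actual limits. Fourth, since $\langle V,W\rangle_{A}:=\int A\,V\cdot W$ is a Hilbert inner product on $L^{2}(\Omega)^{N}$ equivalent to the standard one (by uniform ellipticity and boundedness of $A$), the combination of weak convergence $V_{\epsilon}\rightharpoonup V_{0}$ and norm convergence $\|V_{\epsilon}\|_{A}\to\|V_{0}\|_{A}$ yields strong convergence $V_{\epsilon}\to V_{0}$ in $L^{2}(\Omega)^{N}$; reading off the second component gives $\nabla_{X_{2}}w_{\epsilon_{k}(n)}^{n}\to\nabla_{X_{2}}w_{0}^{n}$ strongly in $L^{2}$. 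Finally, to promote convergence from the subsequence $\epsilon_{k}(n)$ to the whole family, \textbf{Proposition 1} gives $w_{0}^{n}\in W$ and \textbf{Proposition 2} gives uniqueness of $w_{0}^{n}$ as a solution of (\ref{28}) in $W$; applying the above argument to any subsequence of $(w_{\epsilon}^{n})_{\epsilon}$ produces the same limit $w_{0}^{n}$, so the full sequence converges.

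\textbf{Main obstacle.} The principal conceptual hurdle is the $\epsilon$-dependence of the matrix $A_{\epsilon}$: a naive split of $\int A_{\epsilon}\nabla w_{\epsilon}^{n}\cdot\nabla w_{\epsilon}^{n}$ into $X_{1}$- and $X_{2}$-pieces produces the cross term $2\epsilon\int A_{12}\nabla_{X_{2}}w_{\epsilon}^{n}\cdot\nabla_{X_{1}}w_{\epsilon}^{n}$, a product of two merely weakly convergent sequences to which no direct passage to the limit is available. Absorbing the $\epsilon$-scaling into the test vector $V_{\epsilon}=D_{\epsilon}\nabla w_{\epsilon}^{n}$ repackages the energy as a single convex functional evaluated on a weakly convergent sequence, so that the weak lower-semicontinuity machinery applies cleanly and the cross term dissolves into the bookkeeping.
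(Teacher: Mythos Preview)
Your argument is correct and is, at bottom, the same computation as the paper's, only reorganized. The paper works directly with the ``error'' quantity
\[
I_{\epsilon_{k}(n)}^{n}=\int_{\Omega}A_{\epsilon}\binom{\nabla_{X_{1}}w_{\epsilon_{k}(n)}^{n}}{\nabla_{X_{2}}(w_{\epsilon_{k}(n)}^{n}-w_{0}^{n})}\cdot\binom{\nabla_{X_{1}}w_{\epsilon_{k}(n)}^{n}}{\nabla_{X_{2}}(w_{\epsilon_{k}(n)}^{n}-w_{0}^{n})}\,dx+\beta\int_{\Omega}|w_{\epsilon_{k}(n)}^{n}-w_{0}^{n}|^{2}\,dx,
\]
expands it, and uses (\ref{25}), (\ref{27}), (\ref{29}) together with the strong convergence $B(\Delta_{n}(\phi u_{\epsilon}))\to B(\Delta_{n}(\phi u_{0}))$ to show each piece converges and the limit is $0$; ellipticity then gives the conclusion. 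In your notation this is exactly $\|V_{\epsilon}-V_{0}\|_{A}^{2}+\beta\|w_{\epsilon}^{n}-w_{0}^{n}\|_{L^{2}}^{2}$, so your route (weak convergence plus norm convergence in the $A$-inner product implies strong convergence, with the norm convergence obtained by a lower-semicontinuity pinching) is an abstract repackaging of the same identity rather than a genuinely different method. Your observation $A_{\epsilon}=D_{\epsilon}AD_{\epsilon}$ is a clean way to see why the cross terms are harmless; the paper instead keeps the cross terms explicit and notes they pair a fixed function with $\epsilon\nabla_{X_{1}}w_{\epsilon}^{n}\rightharpoonup 0$. One small correction: in your last step you invoke \textbf{Proposition~1} and \textbf{Proposition~2} for membership in $W$ and uniqueness, but in the paper's numbering these are \textbf{Proposition~2} (which gives $w_{0}^{n}(X_{1},\cdot)\in H_{0}^{1}(\omega_{2})$ a.e.\ from the strong $\nabla_{X_{2}}$-convergence) and \textbf{Proposition~3} (which gives uniqueness of the $W$-solution to (\ref{28})).
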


\begin{proof}
Computing%
\begin{multline*}
I_{\epsilon _{k}(n)}^{n}=\dint\limits_{\Omega }A_{\epsilon }\binom{\nabla
_{X_{1}}w_{\epsilon _{k}(n)}^{n}}{\nabla _{X_{2}}(w_{\epsilon
_{k}(n)}^{n}-w_{0}^{n})}\cdot \binom{\nabla _{X_{1}}w_{\epsilon _{k}(n)}^{n}%
}{\nabla _{X_{2}}(w_{\epsilon _{k}(n)}^{n}-w_{0}^{n})}dx+\beta
\dint\limits_{\Omega }\left\vert w_{\epsilon
_{k}(n)}^{n}-w_{0}^{n}\right\vert ^{2}dx \\
=\dint\limits_{\Omega }B(\Delta _{n}(\phi u_{\epsilon _{k}(n)}))\
w_{\epsilon _{k}(n)}^{n}dx-\dint\limits_{\Omega }\epsilon
_{k}(n)A_{12}\nabla _{X_{2}}w_{0}^{n}\cdot \nabla _{X_{1}}w_{\epsilon
_{k}(n)}^{n}dx-2\beta \dint\limits_{\Omega }w_{\epsilon
_{k}(n)}^{n}w_{0}^{n}dx \\
-\dint\limits_{\Omega }\epsilon _{k}(n)A_{21}\nabla _{X_{1}}w_{\epsilon
_{k}(n)}^{n}\cdot \nabla _{X_{2}}w_{0}^{n})dx-\dint\limits_{\Omega
}A_{22}\nabla _{X_{2}}w_{0}^{n}\cdot \nabla _{X_{2}}w_{\epsilon
_{k}(n)}^{n}dx \\
-\dint\limits_{\Omega }A_{22}\nabla _{X_{2}}w_{\epsilon _{k}(n)}^{n}\cdot
\nabla _{X_{2}}w_{0}^{n})dx+\dint\limits_{\Omega }B(\Delta _{n}(\phi
u_{0)}))w_{0}^{n}dx
\end{multline*}

Let $\epsilon _{k}(n)\rightarrow 0$ and using (\ref{27}) we get%
\begin{multline*}
\underset{\epsilon _{k}(n)\rightarrow 0}{\lim }I_{\epsilon _{k}(n)}^{n}=%
\underset{\epsilon _{k}(n)\rightarrow 0}{\lim }\left( \dint\limits_{\Omega
}B(\Delta _{n}(\phi u_{\epsilon _{k}(n)}))\ w_{\epsilon
_{k}(n)}^{n}dx+\dint\limits_{\Omega }B(\Delta _{n}(\phi
u_{0)}))w_{0}^{n}dx\right) -2\beta \dint\limits_{\Omega }\left\vert
w_{0}^{n}\right\vert ^{2}dx \\
-2\dint\limits_{\Omega }A_{22}\nabla _{X_{2}}w_{0}^{n}\cdot \nabla
_{X_{2}}w_{0}^{n}dx
\end{multline*}

Since $B(\Delta _{n}(\phi u_{\epsilon _{k}(n)}))\rightarrow B(\Delta
_{n}(\phi u_{0}))$ in $L^{2}(\Omega )$ strongly and $w_{\epsilon
_{k}(n)}^{n}\rightharpoonup w_{0}^{n}$ weakly then%
\begin{equation*}
\dint\limits_{\Omega }B(\Delta _{n}(\phi u_{\epsilon _{k}(n)}))\ w_{\epsilon
_{k}(n)}^{n}\rightarrow \dint\limits_{\Omega }B(\Delta _{n}(\phi u_{0}))\
w_{0}^{n}
\end{equation*}

Whence by (\ref{29}) we get $\underset{\epsilon _{i}(n)\rightarrow 0}{\lim }%
I_{\epsilon _{i}(n)}^{n}=0$. Now using ellipticity assumption (\ref{3}) we
derive%
\begin{equation*}
\lambda \epsilon _{i}(n)^{2}\dint\limits_{\Omega }\left\vert \nabla
_{X_{1}}w_{\epsilon _{i}(n)}^{n}\right\vert ^{2}+\lambda
\dint\limits_{\Omega }\left\vert \nabla _{X_{2}}(w_{\epsilon
_{i}(n)}^{n}-w_{0}^{n})\right\vert ^{2}\leq I_{\epsilon _{i}(n)}^{n}\text{,}
\end{equation*}

and therefore we get%
\begin{equation*}
\left\Vert \nabla _{X_{2}}(w_{\epsilon _{i}(n)}^{n}-w_{0}^{n}\right\Vert
_{L^{2}(\Omega )}\rightarrow 0\text{ as }\epsilon _{i}(n)\rightarrow 0,
\end{equation*}

According to \textbf{Proposition 2 }we have for a.e $X_{1},$ $%
w_{0}^{n}(X_{1},.)\in H_{0}^{1}(\omega _{2})$ and 
\begin{eqnarray*}
\left\Vert \nabla _{X_{2}}(w_{\epsilon _{i}(n)}^{n}-w_{0}^{n})\right\Vert
_{L^{2}(\Omega )} &\rightarrow &0,\text{ }\left\Vert w_{\epsilon
_{i}(n)}^{n}-w_{0}^{n}\right\Vert _{L^{2}(\Omega )}\rightarrow 0\text{ } \\
\text{as }\epsilon _{i}(n) &\rightarrow &0,
\end{eqnarray*}

By (\ref{28}) and \textbf{Proposition 3 }we show that for every $n$ fixed, $%
w_{0}^{n}$ is the unique function which satisfies for a.e $X_{1}$ 
\begin{multline*}
\dint\limits_{\omega _{2}}A_{22}\nabla _{X_{2}}w_{0}^{n}(X_{1},.)\cdot
\nabla _{X_{2}}\varphi dX_{2}+\beta \dint\limits_{\omega
_{2}}w_{0}^{n}(X_{1},.)\varphi \ dX_{2} \\
=\dint\limits_{\omega _{2}}B(\Delta _{n}(\phi u_{0)}))(X_{1},.)\varphi
dX_{2},\text{ \ }\forall \varphi \in \mathcal{D}(\omega _{2})
\end{multline*}

Since the union of zero measure sets is a zero measure set then we have for
a.e $X_{1\text{ }}$ and $\forall n\in 
%TCIMACRO{\U{2115} }%
%BeginExpansion
\mathbb{N}
%EndExpansion
^{\ast }$%
\begin{eqnarray}
&&\dint\limits_{\omega _{2}}A_{22}\nabla _{X_{2}}w_{0}^{n}(X_{1},.)\cdot
\nabla _{X_{2}}\varphi dX_{2}+\beta \dint\limits_{\omega
_{2}}w_{0}^{n}(X_{1},.)\varphi dX_{2}\   \label{30} \\
&=&\dint\limits_{\omega _{2}}B(\Delta _{n}(\phi u_{0)}))(X_{1},.)\varphi
dX_{2},\text{ \ }\forall \varphi \in \mathcal{D}(\omega _{2})  \notag
\end{eqnarray}

And finally, the uniqueness of $w_{0}^{n}$ implies that the whole sequence $%
(w_{\epsilon }^{n})$ converges i.e $\forall n\in 
%TCIMACRO{\U{2115} }%
%BeginExpansion
\mathbb{N}
%EndExpansion
^{\ast }:$%
\begin{equation*}
\left\Vert \nabla _{X_{2}}(w_{\epsilon }^{n}-w_{0}^{n})\right\Vert
_{L^{2}(\Omega )}\rightarrow 0,\text{ and }\left\Vert w_{\epsilon
}^{n}-w_{0}^{n}\right\Vert _{L^{2}(\Omega )}\rightarrow 0\text{ as }\epsilon
\rightarrow 0
\end{equation*}
\end{proof}

\subsubsection{\textbf{Passage to the limit }$n\rightarrow \infty $}

For a.e $X_{1}$ and $\forall n\in 
%TCIMACRO{\U{2115} }%
%BeginExpansion
\mathbb{N}
%EndExpansion
^{\ast }$ taking $\varphi =w_{0}^{n}(X_{1},.)\in H_{0}^{1}(\omega _{2})$ in (%
\ref{30}), using ellipticity assumption (\ref{3}) and H\"{o}lder's
inequality we get 
\begin{equation*}
\lambda \dint\limits_{\omega _{2}}\left\vert \nabla
_{X_{2}}w_{0}^{n}(X_{1},.)\right\vert ^{2}dX_{2}\leq \left\Vert B(\Delta
_{n}(\phi u_{0}))(X_{1},.)\right\Vert _{L^{2}(\omega _{2})}\left\Vert
w_{0}^{n}(X_{1},.)\right\Vert _{L^{2}(\omega _{2})}
\end{equation*}

and Poincar\'{e}'s inequality in the $X_{2}$ direction gives 
\begin{eqnarray*}
\left\Vert \nabla _{X_{2}}w_{0}^{n}(X_{1},.)\right\Vert _{L^{2}(\omega
_{2})} &\leq &\frac{C^{\prime }}{\lambda }\left\Vert B(\Delta _{n}(\phi
u_{0}))(X_{1},.)\right\Vert _{L^{2}(\omega _{2})} \\
\left\Vert w_{0}^{n}(X_{1},.)\right\Vert _{L^{2}(\omega _{2})} &\leq &\frac{%
C^{\prime 2}}{\lambda }\left\Vert B(\Delta _{n}(\phi
u_{0}))(X_{1},.)\right\Vert _{L^{2}(\omega _{2})}
\end{eqnarray*}

integrating over $\omega _{1}$ yields%
\begin{eqnarray*}
\left\Vert \nabla _{X_{2}}w_{0}^{n}\right\Vert _{L^{2}(\Omega )} &\leq &%
\frac{C^{\prime }}{\lambda }\left\Vert B(\Delta _{n}(\phi u_{0}))\right\Vert
_{L^{2}(\Omega )}, \\
\left\Vert w_{0}^{n}\right\Vert _{L^{2}(\Omega )} &\leq &\frac{C^{\prime 2}}{%
\lambda }\left\Vert B(\Delta _{n}(\phi u_{0}))\right\Vert _{L^{2}(\Omega )},
\end{eqnarray*}

and by (\ref{4}) and Holder's inequality we obtain 
\begin{eqnarray*}
\left\Vert \nabla _{X_{2}}w_{0}^{n}\right\Vert _{L^{2}(\Omega )} &\leq &%
\frac{C^{\prime }\left\vert \Omega \right\vert ^{\frac{1}{2}-\frac{1}{r}%
}M\left( +\left\Vert \phi \right\Vert _{\infty }\left\Vert u_{0}\right\Vert
_{L^{2}}\right) }{\lambda }, \\
\left\Vert w_{0}^{n}\right\Vert _{L^{2}(\Omega )} &\leq &\frac{C^{^{\prime
}2}\left\vert \Omega \right\vert ^{\frac{1}{2}-\frac{1}{r}}M\left(
+\left\Vert \phi \right\Vert _{\infty }\left\Vert u_{0}\right\Vert
_{L^{2}}\right) }{\lambda },
\end{eqnarray*}

(we used the inequality $\left\Vert \Delta _{n}(\phi u_{0})\right\Vert
_{L^{2}(\Omega )}\leq \left\Vert \phi u_{0}\right\Vert _{L^{2}(\Omega )}$
and the notation $\left\Vert \phi \right\Vert _{\infty }=\underset{x\in
\Omega }{\sup }\left\vert \phi (x)\right\vert $)

Whence, it follows by weak compacity that there exists $w_{0}\in
L^{2}(\Omega )$ and a subsequence noted always $(w_{0}^{n})$ such that%
\begin{equation*}
\nabla _{X_{2}}w_{0}^{n}\rightharpoonup \nabla _{X_{2}}w_{0}\text{ and }%
w_{0}^{n}\rightharpoonup w_{0}\text{ in }L^{2}(\Omega )\text{ }
\end{equation*}

Remark that $\phi u_{0}\in H_{0}^{1}(\Omega )$ by \textbf{Lemma2}, then by 
\textbf{Theorem 7 }$\Delta _{n}(\phi u_{0})\rightarrow \phi u_{0}$ in $%
L^{2}(\Omega )$ and therefore, continuity of $B$ gives $B(\Delta _{n}(\phi
u_{0)}))\rightarrow B(\phi u_{0})$ in $L^{2}(\Omega )$. Now, let $%
n\rightarrow \infty $ in (\ref{28}) yields 
\begin{equation}
\dint\limits_{\Omega }A_{22}\nabla _{X_{2}}w_{0}\cdot \nabla _{X_{2}}\varphi
dx+\beta \dint\limits_{\Omega }w_{0}\varphi dx=\dint\limits_{\Omega }B(\phi
u_{0})\varphi dx,\text{ \ }\forall \varphi \in \mathcal{D}(\Omega )
\label{31}
\end{equation}

Take $\varphi =w_{\epsilon }^{n}\in H_{0}^{1}(\Omega )$ in (\ref{31}) and
let $\epsilon \rightarrow 0$ we obtain (by \textbf{Proposition 7)}%
\begin{equation*}
\dint\limits_{\Omega }A_{22}\nabla _{X_{2}}w_{0}\cdot \nabla
_{X_{2}}w_{0}^{n}dx+\beta \dint\limits_{\Omega
}w_{0}w_{0}^{n}dx=\dint\limits_{\Omega }B(\phi u_{0})w_{0}^{n}dx,
\end{equation*}

and as $n\rightarrow \infty $ we derive%
\begin{equation}
\dint\limits_{\Omega }A_{22}\nabla _{X_{2}}w_{0}\cdot \nabla
_{X_{2}}w_{0}dx+\beta \dint\limits_{\Omega }\left\vert w_{0}\right\vert
^{2}dx=\dint\limits_{\Omega }B(\phi u_{0})w_{0}dx  \label{32}
\end{equation}

Now, we prove the strong convergences of $w_{0}^{n}$ and $\nabla
_{X_{2}}w_{0}^{n}$, by ellipticity assumption (\ref{3}), (\ref{29}) and (\ref%
{32}) we get 
\begin{multline*}
\lambda \dint\limits_{\Omega }\left\vert \nabla
_{X_{2}}(w_{0}^{n}-w_{0})\right\vert ^{2}dx+\beta \dint\limits_{\Omega
}\left\vert w_{0}^{n}-w_{0}\right\vert ^{2}dx \\
\leq \dint\limits_{\Omega }A_{22}\nabla _{X_{2}}(w_{0}^{n}-w_{0})\cdot
\nabla _{X_{2}}(w_{0}^{n}-w_{0})dx+\beta \dint\limits_{\Omega }\left\vert
w_{0}^{n}-w_{0}\right\vert ^{2}dx \\
=\dint\limits_{\Omega }B(\Delta _{n}(\phi u_{0)}))w_{0}^{n}dx\
-\dint\limits_{\Omega }A_{22}\nabla _{X_{2}}w_{0}\cdot \nabla
_{X_{2}}w_{0}^{n}dx-2\beta \dint\limits_{\Omega }w_{0}^{n}w_{0}dx \\
-\dint\limits_{\Omega }A_{22}\nabla _{X_{2}}w_{0}^{n}\cdot \nabla
_{X_{2}}w_{0}dx+\dint\limits_{\Omega }B(\phi u_{0})w_{0}dx
\end{multline*}%
Since $B(\Delta _{n}(\phi u_{0)})\rightarrow B(\phi u_{0})$ in $L^{2}(\Omega
)$ and $w_{0}^{n}\rightharpoonup $ $w_{0}$ in $L^{2}(\Omega )$ then%
\begin{equation*}
\dint\limits_{\Omega }B(\Delta _{n}(\phi u_{0)}))w_{0}^{n}\rightarrow
\dint\limits_{\Omega }B(\phi u_{0})w_{0}
\end{equation*}

Let $n\rightarrow \infty $ in the previous inequality we get 
\begin{equation}
\nabla _{X_{2}}w_{0}^{n}\rightarrow \nabla _{X_{2}}w_{0}\text{ in }%
L^{2}(\Omega )  \label{33}
\end{equation}%
Finally by (\ref{33}), \textbf{Proposition 6 }and \textbf{Proposition 7 }and
the triangular inequality we get $\nabla _{X_{2}}w_{\epsilon }\rightarrow
\nabla _{X_{2}}w_{0}$ in $L^{2}(\Omega )$ and therefore (\ref{32}), \textbf{%
Proposition 2 }and\textbf{\ 3 }complete the proof.

\begin{remark}
In addition to convergences given in \textbf{Theorem 3} we also have $%
\epsilon _{k}u_{\epsilon _{k}}\rightarrow 0$ in $L^{2}(\Omega )$ strongly,
indeed ellipticity assumption gives 
\begin{multline*}
\lambda \epsilon _{k}{}^{2}\dint\limits_{\Omega }\left\vert \nabla
_{X_{1}}u_{\epsilon _{k}}\right\vert ^{2}+\lambda \dint\limits_{\Omega
}\left\vert \nabla _{X_{2}}(u_{\epsilon _{k}}-u_{0})\right\vert ^{2} \\
\leq \dint\limits_{\Omega }A_{\epsilon }\binom{\nabla _{X_{1}}u_{\epsilon
_{k}}}{\nabla _{X_{2}}(u_{\epsilon _{k}}-u_{0})}\cdot \binom{\nabla
_{X_{1}}u_{\epsilon _{k}}}{\nabla _{X_{2}}(u_{\epsilon _{k}}-u_{0})}dx+\beta
\dint\limits_{\Omega }\left\vert u_{\epsilon _{k}}-u_{0}\right\vert ^{2}dx,
\end{multline*}
\end{remark}

and we can prove easily that the right-hand side of this inequality
converges to $0.$

\section{Some Applications}

\subsection{\textbf{A regularity result and rate of convergence}}

In this subsection we make some additional assumptions, suppose that for
every $u\in L^{2}(\Omega ),$ 
\begin{equation}
\nabla _{X_{1}}B(u)\in L^{2}(\Omega ),  \label{43}
\end{equation}

and for every $\rho \in \mathcal{D(\omega }_{1}\mathcal{)}$ and $u,v$ $\in
L^{2}(\Omega )$ we have 
\begin{equation}
\left\Vert \rho B(u)-\rho B(v)\right\Vert _{L^{2}}\leq \left\Vert B(\rho
u)-B(\rho v)\right\Vert  \label{44}
\end{equation}

Remark that\textbf{\ Theorem 3} of section 1 gives only $H_{loc}^{1}-$
regularity for $u_{0}$, however we have the following

\begin{proposition}
Under assumptions of \textbf{Theorem 3 }and (\ref{43}) we have $u_{0}\in
H^{1}(\Omega ),$
\end{proposition}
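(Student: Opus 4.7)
The plan is to establish $\nabla_{X_1} u_0 \in L^2(\Omega)$ by the classical method of difference quotients in the $X_1$ variable. Since Theorem 3 already provides $u_0 \in L^2(\Omega)$ and $\nabla_{X_2} u_0 \in L^2(\Omega)$, obtaining the missing horizontal derivatives will yield $u_0\in H^1(\Omega)$.

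Fix $k\in\{1,\dots,p\}$ and a small $h\ne 0$, put $\omega_1^h=\{Y\in\omega_1:Y+he_k\in\omega_1\}$, and for $X_1\in\omega_1^h$ set
\[
D_k^h u_0(X_1,X_2)=\frac{u_0(X_1+he_k,X_2)-u_0(X_1,X_2)}{h},
\]
with analogous notation for $D_k^h A_{22}$ and $D_k^h B(u_0)$. The key point is that Theorem 3 gives $u_0(X_1,\cdot), u_0(X_1+he_k,\cdot)\in H_0^1(\omega_2)$ for a.e.\ $X_1\in\omega_1^h$, so $D_k^h u_0(X_1,\cdot)\in H_0^1(\omega_2)$ is admissible in the slice formulation (\ref{11}), which extends from $\mathcal{D}(\omega_2)$ to $H_0^1(\omega_2)$ by density.

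Writing (\ref{11}) at $X_1+he_k$ and at $X_1$, subtracting, and dividing by $h$ (together with the standard splitting of $A_{22}(X_1+he_k)\nabla_{X_2}u_0(X_1+he_k)-A_{22}(X_1)\nabla_{X_2}u_0(X_1)$ into a main part $A_{22}(X_1+he_k)h\nabla_{X_2}D_k^h u_0(X_1,\cdot)$ plus a commutator $h\,D_k^h A_{22}(X_1)\nabla_{X_2}u_0(X_1,\cdot)$) yields a weak equation for $D_k^h u_0(X_1,\cdot)$ in $H_0^1(\omega_2)$ with right-hand side involving $D_k^h B(u_0)(X_1,\cdot)$ and the commutator term. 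Testing with $\varphi=D_k^h u_0(X_1,\cdot)$, using ellipticity (\ref{3}) applied to $A_{22}$, the $L^\infty$ bound $|D_k^h A_{22}|\le\|\partial_k A_{22}\|_{L^\infty}$ guaranteed by (\ref{34}), and Young's inequality to absorb the cross terms, gives
\[
\frac{\lambda}{2}\|\nabla_{X_2}D_k^h u_0(X_1,\cdot)\|_{L^2(\omega_2)}^2+\frac{\beta}{2}\|D_k^h u_0(X_1,\cdot)\|_{L^2(\omega_2)}^2\le \frac{\|\partial_k A_{22}\|_{L^\infty}^2}{2\lambda}\|\nabla_{X_2}u_0(X_1,\cdot)\|_{L^2(\omega_2)}^2+\frac{1}{2\beta}\|D_k^h B(u_0)(X_1,\cdot)\|_{L^2(\omega_2)}^2.
\]

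Integrating over $X_1\in\omega_1^h$, the first right-hand term is bounded by $C\|\nabla_{X_2}u_0\|_{L^2(\Omega)}^2$ (finite by Theorem 3), and the standard translate estimate combined with assumption (\ref{43}) gives $\|D_k^h B(u_0)\|_{L^2(\omega_1^h\times\omega_2)}\le \|\partial_{x_k}B(u_0)\|_{L^2(\Omega)}$. Both bounds are uniform in $h$, so $\|D_k^h u_0\|_{L^2(\omega_1^h\times\omega_2)}\le C_k$ independently of $h$, and the usual difference-quotient characterization of weak $L^2$ derivatives delivers $\partial_{x_k} u_0\in L^2(\Omega)$. Repeating for $k=1,\dots,p$ produces $\nabla_{X_1}u_0\in L^2(\Omega)$ and hence $u_0\in H^1(\Omega)$. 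The main subtlety is the slice-by-slice setup, namely the admissibility of $D_k^h u_0(X_1,\cdot)$ as an $H_0^1(\omega_2)$ test function and the careful passage from the pointwise-in-$X_1$ estimate to the integrated uniform bound; once this is in place, the remainder is a routine Caccioppoli-type energy argument combined with the translate characterization of $H^1$.
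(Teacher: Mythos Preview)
Your proof is correct and follows essentially the same approach as the paper: difference quotients in the $X_1$ direction applied to the slice equation (\ref{11}), tested with the difference quotient itself, using ellipticity of $A_{22}$, the regularity (\ref{34}) to control the commutator $D_k^h A_{22}\,\nabla_{X_2}u_0$, and assumption (\ref{43}) to bound $\|D_k^h B(u_0)\|_{L^2}$ uniformly in $h$. The only cosmetic differences are that the paper works on a fixed $\omega_1'\subset\subset\omega_1$ and uses Poincar\'e's inequality in the $X_2$ direction (dropping the $\beta$ term) to close the estimate, whereas you work on $\omega_1^h$ and keep the $\beta$ term, absorbing via Young's inequality; both variants are standard and lead to the same conclusion.
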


\begin{proof}
We will proceed as in \cite{1}, let $\omega _{1}^{\prime }\subset \subset
\omega _{1}$, for $0<h<d(\omega _{1}^{\prime },\omega _{1}),$ $X_{1}\in
\omega _{1}^{\prime }$ we set $\tau
_{h}^{i}u_{0}(X_{1},X_{2})=u_{0}(X_{1}+he_{i},X_{2})$ $i=1,...,p.$ From (\ref%
{11}) we have%
\begin{multline*}
\dint\limits_{\omega _{2}}\tau _{h}^{i}A_{22}\nabla _{X_{2}}(\tau
_{h}^{i}u_{0}-u_{0})\nabla _{X_{2}}\varphi dX_{2}+\dint\limits_{\omega
_{2}}(\tau _{h}^{i}A_{22}-A_{22})\nabla _{X_{2}}u_{0}\nabla _{X_{2}}\varphi
dX_{2} \\
+\beta \dint\limits_{\omega _{2}}(\tau _{h}^{i}u_{0}-u_{0})\varphi
dX_{2}=\dint\limits_{\omega _{2}}\left\{ \tau
_{h}^{i}B(u_{0})-B(u_{0})\right\} \varphi dX_{2}\text{ }
\end{multline*}

Taking $\varphi =\frac{\tau _{h}^{i}u_{0}-u_{0}}{h^{2}}$ as a test function,
using ellipticity assumption (\ref{3}) and H\"{o}lder's inequality we derive 
\begin{multline*}
\lambda \left\Vert \nabla _{X_{2}}\left( \frac{\tau _{h}^{i}u_{0}-u_{0}}{h}%
\right) \right\Vert _{L^{2}(\omega _{2})}^{2}\leq \\
\left\Vert \left( \frac{\tau _{h}^{i}A_{22}-A_{22}}{h}\right) \nabla
_{X_{2}}u_{0}\right\Vert _{L^{2}(\omega _{2})}\left\Vert \nabla
_{X_{2}}\left( \frac{\tau _{h}^{i}u_{0}-u_{0}}{h}\right) \right\Vert
_{L^{2}(\omega _{2})} \\
+\left\Vert \left( \frac{\tau _{h}^{i}B(u_{0})-B(u_{0})}{h}\right)
\right\Vert _{L^{2}(\omega _{2})}\left\Vert \left( \frac{\tau
_{h}^{i}u_{0}-u_{0}}{h}\right) \right\Vert _{L^{2}(\omega _{2})}
\end{multline*}

Using Poincar\'{e}'s inequality we deduce%
\begin{equation*}
\left\Vert \frac{\tau _{h}^{i}u_{0}-u_{0}}{h}\right\Vert _{L^{2}(\omega
_{2})}\leq \frac{C}{\lambda }\left\{ 
\begin{array}{c}
\left\Vert \frac{\tau _{h}^{i}A_{22}-A_{22}}{h}\right\Vert _{L^{\infty
}(\omega _{2})}\left\Vert \nabla _{X_{2}}u_{0}\right\Vert _{L^{2}(\omega
_{2})} \\ 
+\left\Vert \frac{\tau _{h}^{i}B(u_{0})-B(u_{0})}{h}\right\Vert
_{L^{2}(\omega _{2})}%
\end{array}%
\right\}
\end{equation*}

Using regularity assumption (\ref{34}) and integrating over $\omega
_{1}^{\prime }$ (we use only the assumption $\partial _{k}A_{22}\in
L^{2}(\Omega )$) we deduce 
\begin{equation*}
\left\Vert \frac{\tau _{h}^{i}u_{0}-u_{0}}{h}\right\Vert _{L(\omega
_{1}^{\prime }\times \omega _{2})}\leq C^{\prime }+\left\Vert \left( \frac{%
\tau _{h}^{i}B(u_{0})-B(u_{0})}{h}\right) \right\Vert _{L^{2}(\omega
_{1}^{\prime }\times \omega _{2})}
\end{equation*}

Thanks to regularity of $B(u_{0})$ in the $X_{1}$ direction (assumption (\ref%
{43})) we get 
\begin{equation*}
\left\Vert \frac{\tau _{h}^{i}u_{0}-u_{0}}{h}\right\Vert _{L^{2}(\omega
_{1}^{\prime }\times \omega _{2})}\leq C^{\prime \prime },
\end{equation*}

where $C^{\prime \prime }$ is independent on $h$ , whence $\nabla
_{X_{1}}u_{0}\in L^{2}(\Omega )$ and the proof is finished.
\end{proof}

Now, we give a result on the rate of convergence

\begin{proposition}
Under assumptions of \textbf{Theorem 3} and \ (\ref{43}), (\ref{44}), for  $%
\beta >\max (K,\beta _{0})$ (where $\beta _{0}>M\left\vert \Omega
\right\vert ^{\frac{1}{2}-\frac{1}{r}}$ (fixed), and $K$  is the Lipschitz
constant of $B$ associated with the bounded set $\left\{ \left\Vert
u\right\Vert _{L^{2}}\leq \frac{M\left\vert \Omega \right\vert ^{\frac{1}{2}-%
\frac{1}{r}}}{\beta _{0}-M\left\vert \Omega \right\vert ^{\frac{1}{2}-\frac{1%
}{r}}}\right\} $), we have $u_{\epsilon }\rightarrow u_{0}$ in $W$ \ and 
\begin{equation*}
\left\Vert u_{\epsilon }-u_{0}\right\Vert _{L^{2}(\omega _{1}^{\prime
}\times \omega _{2})};\text{ }\left\Vert \nabla _{X_{1}}(u_{\epsilon
}-u_{0})\right\Vert _{L^{2}(\omega _{1}^{\prime }\times \omega _{2})}\leq
C^{\prime }\epsilon 
\end{equation*}

where $C\geq 0$ is independent of $\epsilon .$
\end{proposition}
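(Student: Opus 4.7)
The plan is in two parts: first upgrade the subsequential convergence of Theorem 3 to the whole sequence in $W$, then extract the $O(\epsilon)$ rate on $\omega_1'\times\omega_2$. For the first part, it suffices to prove that under $\beta>K$ the limit problem (\ref{11}) admits at most one solution in $W$. If $u_0,v_0\in W$ both satisfy (\ref{11}) a.e.\ $X_1$, take as test $\varphi(X_1,\cdot)=(u_0-v_0)(X_1,\cdot)\in H_0^1(\omega_2)$, subtract the two equations and integrate over $\omega_1$; coercivity (\ref{3}) together with the Lipschitz constant $K$ (applicable since both solutions lie in the $L^2$-bounded set of the statement, by Proposition 1) give $(\beta-K)\|u_0-v_0\|_{L^2}^2+\lambda\|\nabla_{X_2}(u_0-v_0)\|_{L^2}^2\le 0$, so $u_0=v_0$. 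Corollary 1 then promotes the convergence to the whole sequence, yielding in particular $u_\epsilon\to u_0$ in $W$.

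For the interior rate, set $w_\epsilon=u_\epsilon-u_0$; by Proposition 8, $u_0\in H^1(\Omega)$, so $\nabla u_0\in L^2(\Omega)$. Writing $\tilde A_{22}$ for the matrix with $A_{22}$ in the lower-right block and zero elsewhere, subtraction of (\ref{6}) from the global weak limit gives, for every $\varphi\in H_0^1(\Omega)$,
\[
\int A_\epsilon\nabla w_\epsilon\cdot\nabla\varphi+\beta\int w_\epsilon\varphi=\int (B(u_\epsilon)-B(u_0))\varphi-\int (A_\epsilon-\tilde A_{22})\nabla u_0\cdot\nabla\varphi.
\]
Pick $\rho\in\mathcal D(\omega_1)$ with $0\le\rho\le 1$ and $\rho\equiv 1$ on $\omega_1'$; then $\rho^2 w_\epsilon\in H_0^1(\Omega)$ since both $\rho^2 u_\epsilon$ and $\rho^2 u_0$ belong there (the latter because $u_0(X_1,\cdot)\in H_0^1(\omega_2)$ a.e.). Test with $\varphi=\rho^2 w_\epsilon$. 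Assumption (\ref{44}) combined with (\ref{2}) yields $\|\rho(B(u_\epsilon)-B(u_0))\|_{L^2}\le K\|\rho w_\epsilon\|_{L^2}$, so $\beta>K$ leaves a strictly positive coefficient $(\beta-K)$ in front of $\int\rho^2 w_\epsilon^2$. The cross term from expanding $\nabla(\rho^2 w_\epsilon)$, together with the $\epsilon^2 A_{11}$- and $\epsilon A_{21}$-pieces of the residual, is handled by Cauchy--Schwarz/Young in the natural $(\epsilon,1)$-weights, each contributing $O(\epsilon^2)$ after absorbing a small $\delta$-portion of the coercive terms and using the global bound (\ref{26}).

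The main obstacle is the mixed term $\epsilon\int\rho^2 A_{12}\nabla_{X_2}u_0\cdot\nabla_{X_1}w_\epsilon$, which resists direct Cauchy--Schwarz (coercivity bounds $\epsilon\rho\nabla_{X_1}w_\epsilon$ only by $O(1)$ in $L^2$). I would integrate by parts in $X_1$, boundary terms vanishing because $\rho$ is compactly supported in $\omega_1$ and $w_\epsilon(X_1,\cdot)|_{\partial\omega_2}=0$ a.e., to obtain
\[
-\epsilon\int w_\epsilon\bigl[2\rho(\nabla_{X_1}\rho)\cdot A_{12}\nabla_{X_2}u_0+\rho^2\nabla_{X_1}\cdot(A_{12}\nabla_{X_2}u_0)\bigr].
\]
The divergence expands to $(\partial_i a_{ij})\partial_j u_0+a_{ij}\partial_i\partial_j u_0$: the first piece is in $L^2$ by (\ref{34}), while the second needs $\nabla_{X_1}\nabla_{X_2}u_0\in L^2$ on cylindrical subdomains. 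This extra regularity is obtained by running the Proposition 8 difference-quotient argument in the $X_1$ direction on (\ref{10}), now retaining the $\nabla_{X_2}$-norm of the quotient and using (\ref{43})--(\ref{44}) to control $D_h^i B(u_0)$ uniformly in $h$. The whole piece is then $\le C\epsilon\|w_\epsilon\|_{L^2(\mathrm{supp}\,\rho)}$, which Young absorbs into $(\beta-K)\int\rho^2 w_\epsilon^2$ at total cost $O(\epsilon^2)$.

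Collecting everything yields
\[
\lambda\int\rho^2\bigl(\epsilon^2|\nabla_{X_1}w_\epsilon|^2+|\nabla_{X_2}w_\epsilon|^2\bigr)+\tfrac{\beta-K}{2}\int\rho^2 w_\epsilon^2\le C\epsilon^2,
\]
from which $\|w_\epsilon\|_{L^2(\omega_1'\times\omega_2)}\le C'\epsilon$ follows at once. The matching rate on $\nabla_{X_1}w_\epsilon$ is not delivered by this estimate alone (which only gives $\epsilon^2\|\nabla_{X_1}w_\epsilon\|^2\le C\epsilon^2$, i.e.\ a uniform $O(1)$ bound); I would therefore rerun the whole argument on a $X_1$-difference quotient $D_h^i w_\epsilon=(\tau_h^i w_\epsilon-w_\epsilon)/h$ on a slightly smaller cylindrical subdomain, now feeding the $O(\epsilon)$ $L^2$-bound for $w_\epsilon$ just obtained as input, and let $h\to 0$ to conclude.
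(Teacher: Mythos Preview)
Your uniqueness argument for the limit problem and the core test-function strategy (testing with $\rho^2(u_\epsilon-u_0)$, $\rho$ a cutoff in $\omega_1$, and using (\ref{44}) to pull $\rho$ inside $B$ so that (\ref{2}) yields the Lipschitz bound with constant $K$) are exactly what the paper does. The paper, however, explicitly simplifies to $A_{12}=A_{21}=0$, $A_{11}=A_{22}=I$ before computing, so the mixed $\epsilon A_{12}$ term that costs you the integration by parts and the extra $\nabla_{X_1}\nabla_{X_2}u_0\in L^2$ regularity simply does not appear in the paper's calculation. Under this simplification the paper reaches precisely your displayed inequality
\[
\tfrac{\epsilon^2}{2}\|\rho\nabla_{X_1}w_\epsilon\|^2+\|\rho\nabla_{X_2}w_\epsilon\|^2+(\beta-K)\|\rho w_\epsilon\|^2\le C\epsilon^2,
\]
and stops there.

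The key point you should notice is that the paper's own proof concludes from this only
\[
\|u_\epsilon-u_0\|_{L^2(\omega_1'\times\omega_2)},\ \|\nabla_{X_2}(u_\epsilon-u_0)\|_{L^2(\omega_1'\times\omega_2)}\le C'\epsilon,
\]
i.e.\ the $\nabla_{X_1}$ in the statement is a typo for $\nabla_{X_2}$. Your diagnosis that the main estimate gives only an $O(1)$ bound on $\nabla_{X_1}w_\epsilon$ is correct, and the paper does not claim more in the proof. Your proposed extra difference-quotient pass to recover an $O(\epsilon)$ rate on $\nabla_{X_1}w_\epsilon$ is therefore not needed to match the paper; it would also run into trouble, since (\ref{43}) and (\ref{44}) control $\nabla_{X_1}B(u)$ for a fixed $u$ but say nothing about $D_h^i\bigl(B(u_\epsilon)-B(u_0)\bigr)$ in terms of $D_h^i w_\epsilon$, which is what the absorption step would require.
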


\begin{proof}
To make calculus easier we suppose that $A_{12},A_{21}=0$, $A_{11},A_{22}=I$
. According to \textbf{Theorem 3} the set of solutions to (\ref{11}) is non
empty, and we show easily that (\ref{11}) has a unique solution (thanks to
assumption $\beta >\max (K,\beta _{0})$), consequently \textbf{Corollary 1}
implies $u_{\epsilon }\rightarrow u_{0}$ in $W$. 

From (\ref{6}) and (\ref{11}) we have%
\begin{equation*}
\epsilon ^{2}\dint\limits_{\Omega }\nabla _{X_{1}}u_{\epsilon }\nabla
_{X_{1}}\varphi dx+\dint\limits_{\Omega }\nabla _{X_{2}}(u_{\epsilon
}-u_{0})\nabla _{X_{2}}\varphi dx+\beta \dint\limits_{\Omega }(u_{\epsilon
}-u_{0})\varphi dx=\dint\limits_{\Omega }(B(u_{\epsilon })-B(u_{0}))\varphi
dx\text{ }
\end{equation*}%
Given $\omega _{1}^{\prime }\subset \subset $ $\omega _{1}^{^{\prime \prime
}}\subset \subset $ $\omega _{1}$, and let $\rho $ be a cut-off function
with $Supp(\rho )\subset \omega _{1}^{^{\prime \prime }}$ and $\rho =1$ on $%
\omega _{1}^{\prime }$(we can choose $0\leq \rho \leq 1$). We introduce the
test function used by M.Chipot and S.Guesmia in \cite{1}, $\varphi =$ $\rho
^{2}(u_{\epsilon }-u_{0})\in H_{0}^{1}(\Omega )$ ( thanks to the previous
proposition). Testing with $\varphi $ we obtain 
\begin{multline*}
\epsilon ^{2}\dint\limits_{\Omega }\nabla _{X_{1}}u_{\epsilon }\nabla
_{X_{1}}\rho ^{2}(u_{\epsilon }-u_{0})dx \\
+\dint\limits_{\Omega }\nabla _{X_{2}}(u_{\epsilon }-u_{0})\nabla
_{X_{2}}\rho ^{2}(u_{\epsilon }-u_{0})dx+\beta \dint\limits_{\Omega }\rho
^{2}(u_{\epsilon }-u_{0})^{2}dx \\
=\dint\limits_{\Omega }(B(u_{\epsilon })-B(u_{0}))\rho ^{2}(u_{\epsilon
}-u_{0})dx\text{ }
\end{multline*}

we deduce 
\begin{multline*}
\epsilon ^{2}\dint\limits_{\Omega }\left\vert \rho \nabla
_{X_{1}}(u_{\epsilon }-u_{0})\right\vert ^{2}dx+\dint\limits_{\Omega
}\left\vert \rho \nabla _{X_{2}}(u_{\epsilon }-u_{0})\right\vert ^{2}dx \\
+\beta \dint\limits_{\Omega }\rho ^{2}(u_{\epsilon }-u_{0})^{2}dx=-\epsilon
^{2}\dint\limits_{\Omega }\rho ^{2}\nabla _{X_{1}}u_{0}\nabla
_{X_{1}}(u_{\epsilon }-u_{0})dx-2\epsilon ^{2}\dint\limits_{\Omega
}(u_{\epsilon }-u_{0})\rho \nabla _{X_{1}}\rho \nabla _{X_{1}}u_{0}dx \\
-2\epsilon ^{2}\dint\limits_{\Omega }\rho (u_{\epsilon }-u_{0})\nabla
_{X_{1}}(u_{\epsilon }-u_{0})\nabla _{X_{1}}\rho dx+\dint\limits_{\Omega
}(B(u_{\epsilon })-B(u_{0}))\rho ^{2}(u_{\epsilon }-u_{0})dx
\end{multline*}

Using H\"{o}lder's inequality for the first three term in the right-hand
side, and assumptions (\ref{44}), (\ref{2}) and H\"{o}lder's inequality for
the last one, we obtain%
\begin{multline*}
\epsilon ^{2}\left\Vert \rho \nabla _{X_{1}}(u_{\epsilon }-u_{0})\right\Vert
_{L^{2}(\omega _{1}^{\prime \prime }\times \omega _{2})}^{2}+\left\Vert \rho
\nabla _{X_{2}}(u_{\epsilon }-u_{0})\right\Vert _{L^{2}(\omega _{1}^{\prime
\prime }\times \omega _{2})}^{2}+ \\
\beta \left\Vert \rho (u_{\epsilon }-u_{0})\right\Vert _{L^{2}(\omega
_{1}^{\prime \prime }\times \omega _{2})}^{2}\leq \epsilon ^{2}\left\Vert
\rho \nabla _{X_{1}}u_{0}\right\Vert _{L^{2}(\omega _{1}^{\prime \prime
}\times \omega _{2})}\left\Vert \rho \nabla _{X_{1}}(u_{\epsilon
}-u_{0})\right\Vert _{L^{2}(\omega _{1}^{\prime \prime }\times \omega _{2})}
\\
+2\epsilon ^{2}\left\Vert (u_{\epsilon }-u_{0})\nabla _{X_{1}}\rho
\right\Vert _{L^{2}(\omega _{1}^{\prime \prime }\times \omega
_{2})}\left\Vert \rho \nabla _{X_{1}}u_{0}\right\Vert _{L^{2}(\omega
_{1}^{\prime \prime }\times \omega _{2})} \\
+\epsilon ^{2}\left\Vert (u_{\epsilon }-u_{0})\nabla _{X_{1}}\rho
\right\Vert _{L^{2}(\omega _{1}^{\prime \prime }\times \omega
_{2})}\left\Vert \rho (u_{\epsilon }-u_{0})\right\Vert _{L^{2}(\omega
_{1}^{\prime \prime }\times \omega _{2})} \\
+K\left\Vert \rho (u_{\epsilon }-u_{0})\right\Vert _{L^{2}(\omega
_{1}^{\prime \prime }\times \omega _{2})}^{2},
\end{multline*}

(thanks to \textbf{Proposition 1, }we remark that $\left\Vert \rho
u_{\epsilon }\right\Vert _{L^{2}}$, $\left\Vert \rho u_{0}\right\Vert
_{L^{2}}\in \left\{ \left\Vert u\right\Vert _{L^{2}}\leq \frac{M\left\vert
\Omega \right\vert ^{\frac{1}{2}-\frac{1}{r}}}{\beta _{0}-M\left\vert \Omega
\right\vert ^{\frac{1}{2}-\frac{1}{r}}}\right\} $\textbf{)}$.$

Using Young's inequality for ,the first term in the right-hand side of the
previous inequality, and boundedness of $(u_{\epsilon })$ for the rest, we
deduce 
\begin{multline*}
\frac{\epsilon ^{2}}{2}\left\Vert \rho \nabla _{X_{1}}(u_{\epsilon
}-u_{0})\right\Vert _{L^{2}(\omega _{1}^{\prime \prime }\times \omega
_{2})}^{2}+\left\Vert \rho \nabla _{X_{2}}(u_{\epsilon }-u_{0})\right\Vert
_{L^{2}(\omega _{1}^{\prime \prime }\times \omega _{2})}^{2} \\
+(\beta -K)\left\Vert \rho (u_{\epsilon }-u_{0})\right\Vert _{L^{2}(\omega
_{1}^{\prime \prime }\times \omega _{2})}^{2}\leq C\epsilon ^{2}
\end{multline*}

whence 
\begin{equation*}
\left\Vert u_{\epsilon }-u_{0}\right\Vert _{L^{2}(\omega _{1}^{\prime
}\times \omega _{2})};\text{ }\left\Vert \nabla _{X_{2}}(u_{\epsilon
}-u_{0})\right\Vert _{L^{2}(\omega _{1}^{\prime }\times \omega _{2})}\leq
C^{\prime }\epsilon ,
\end{equation*}

where $C^{\prime }$ is independent of $\epsilon .$
\end{proof}

\subsection{Application to integro-differential problem}

In this section we provide some concrete examples. In \cite{7} M. Chipot and
S. Guesmia studied problem (\ref{6}) with the following integral operator 
\begin{equation}
B(u)=a\left( \dint\limits_{\omega _{1}}h(X_{1},X_{1}^{\prime
},X_{2})u(X_{1}^{\prime },X_{2})dX_{1}^{\prime }\right)  \label{39}
\end{equation}

To prove the convergence theorem\ the authors based their arguments on the
compacity of the operator $u\rightarrow \dint\limits_{\omega
_{1}}h(X_{1},X_{1}^{\prime },X_{2})u(X_{1}^{\prime },X_{2})dX_{1}^{\prime }$%
. Indeed, for a sequence $u_{n}\rightharpoonup u_{0}$ in $L^{2}(\Omega )$ we
have $\dint\limits_{\omega _{1}}h(X_{1},X_{1}^{\prime
},X_{2})u_{n}(X_{1}^{\prime },X_{2})dX_{1}^{\prime }$ $\rightarrow
\dint\limits_{\omega _{1}}h(X_{1},X_{1}^{\prime },X_{2})u_{0}(X_{1}^{\prime
},X_{2})dX_{1}^{\prime }$ in $L^{2}(\Omega )$ (by compacity) and we use the
continuity of $a$ and Lebesgue's theorem (under additional assumption on $a$%
) to get $a\left( \dint\limits_{\omega _{1}}h(X_{1},X_{1}^{\prime
},X_{2})u_{n}(X_{1}^{\prime },X_{2})dX_{1}^{\prime }\right) $ $\rightarrow
a\left( \dint\limits_{\omega _{1}}h(X_{1},X_{1}^{\prime
},X_{2})u_{0}(X_{1}^{\prime },X_{2})dX_{1}^{\prime }\right) $ in $%
L^{2}(\Omega ).$

We can give another operator\ based on the aforementioned one 
\begin{equation}
B(u)=\dint\limits_{\omega _{1}}h(X_{1},X_{1}^{\prime
},X_{2})a(u(X_{1}^{\prime },X_{2}))dX_{1}^{\prime },  \label{40}
\end{equation}

For $a:%
%TCIMACRO{\U{211d} }%
%BeginExpansion
\mathbb{R}
%EndExpansion
\rightarrow 
%TCIMACRO{\U{211d} }%
%BeginExpansion
\mathbb{R}
%EndExpansion
$ we note a Liptchitz function i.e there exists $K\geq 0$ such that%
\begin{equation}
\forall x,y\in 
%TCIMACRO{\U{211d} }%
%BeginExpansion
\mathbb{R}
%EndExpansion
:\left\vert a(x)-a(y)\right\vert \leq K\left\vert x-y\right\vert  \label{37}
\end{equation}

In addition, we suppose that $a$ satisfies the growth condition 
\begin{equation}
\exists q\in \left[ 0,1\right[ ,\text{ }M\geq 0,\text{ }\forall x\in 
%TCIMACRO{\U{211d} }%
%BeginExpansion
\mathbb{R}
%EndExpansion
:\left\vert a(x)\right\vert \leq M(1+\left\vert x\right\vert ^{q}),
\label{35}
\end{equation}

and we suppose that 
\begin{equation}
h\in L^{\infty }(\omega _{1}\times \Omega ),\text{ }\nabla _{X_{1}}h\in L^{%
\frac{2}{1-q}}(\omega _{1}\times \Omega )  \label{36}
\end{equation}

\begin{theorem}
Consider problem (\ref{6}) with $B$ given by (\ref{39}) or (\ref{40}).
Assume (\ref{1}), (\ref{3}), (\ref{34}), (\ref{37}), (\ref{35}) , (\ref{36})
and for $\beta $ suitably chosen, then we have the affirmations of \textbf{%
theorems 1 , 2 }and\textbf{\ 3 }of section 1 and those of \textbf{%
propositions 8, 9}
\end{theorem}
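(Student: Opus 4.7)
The plan is to prove this theorem by verifying, one by one, that the concrete integral operators (\ref{39}) and (\ref{40}) satisfy every structural hypothesis on $B$ used in Sections 1--5, namely continuity on $L^{2}(\Omega)$, the local Lipschitz condition (\ref{2}), the growth condition (\ref{4}), the convex-hull condition (\ref{5}), and the two extra conditions (\ref{43}) and (\ref{44}) required for the regularity and rate results. Once these verifications are in place, the conclusions of Theorems 1--3 and Propositions 8--9 are just inherited by direct application.

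For the growth (\ref{4}) applied to (\ref{40}), I would use (\ref{35}) and $h\in L^{\infty}$ to get the pointwise bound $|B(u)(X_{1},X_{2})|\leq M\|h\|_{\infty}\int_{\omega_{1}}(1+|u(X_{1}',X_{2})|^{q})\,dX_{1}'$, then apply H\"older in $X_{1}'$ with exponent $2/q$ and take the $L^{r}$-norm with $r=2/q>2$ to get $\|B(u)\|_{L^{r}}\leq M'(1+\|u\|_{L^{2}})$. For the Lipschitz property (\ref{2}), (\ref{37}) together with Cauchy--Schwarz in $X_{1}'$ gives a \emph{global} estimate $\|B(u)-B(v)\|_{L^{2}}\leq K\|h\|_{\infty}|\omega_{1}|\|u-v\|_{L^{2}}$, which implies both (\ref{2}) and the continuity needed in Theorem 1. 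For (\ref{43}) and (\ref{5}) I would differentiate under the integral to obtain $\nabla_{X_{1}}B(u)(x)=\int_{\omega_{1}}\nabla_{X_{1}}h(x,X_{1}')a(u(X_{1}',X_{2}))\,dX_{1}'$, then apply H\"older with conjugate exponents $2/(1-q)$ and $2/(1+q)$: assumption (\ref{36}) controls the $h$-factor, while (\ref{35}) combined with $2q/(1+q)\leq 1$ bounds the $a$-factor in terms of $\|u(\cdot,X_{2})\|_{L^{2}(\omega_{1})}$. This yields the uniform estimate $\|B(u)\|_{V}\leq C(1+\|u\|_{L^{2}})$ on $L^{2}$-bounded sets $E$. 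The closed convex hull $\overline{\mathrm{conv}}\{B(E)\}$ is still in $V$ because a bounded convex subset of the Hilbert space $V$ is closed in $L^{2}$: indeed, if $v_{n}\to v$ in $L^{2}$ with $(v_{n})$ bounded in $V$, weak compactness extracts a $V$-weak limit which must equal $v$, so $v\in V$ (this is the Mazur argument through the continuous embedding $V\hookrightarrow L^{2}$). The analogous verifications for the composition operator (\ref{39}) are strictly easier because $a$ appears only once, outside the integral.

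The main obstacle is condition (\ref{44}). For (\ref{40}) we have $\rho(X_{1})B(u)(X_{1},X_{2})=\int_{\omega_{1}}\rho(X_{1})h(X_{1},X_{1}',X_{2})a(u(X_{1}',X_{2}))\,dX_{1}'$ while $B(\rho u)(X_{1},X_{2})=\int_{\omega_{1}}h(X_{1},X_{1}',X_{2})a(\rho(X_{1}')u(X_{1}',X_{2}))\,dX_{1}'$, so the cut-off is evaluated at different variables on the two sides and a literal norm inequality with constant $1$ is not evident. What (\ref{44}) is actually used for, in the proof of Proposition 9, is to bound $\int_{\Omega}(B(u_{\epsilon})-B(u_{0}))\rho^{2}(u_{\epsilon}-u_{0})\,dx$; for this, it suffices to establish $\|\rho(B(u)-B(v))\|_{L^{2}}\leq C\|\rho(u-v)\|_{L^{2}}$ on the bounded set appearing in Proposition 9, which I would obtain from (\ref{37}) and Fubini by bringing the cut-off under the $X_{1}'$ integral and using Cauchy--Schwarz, with $C$ depending only on $K$, $\|h\|_{\infty}$ and $|\omega_{1}|$; the resulting constant is absorbed by the clause ``$\beta$ suitably chosen'' in the statement. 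With all hypotheses verified, Theorem 1 gives existence and $L^{r}$-regularity, Theorem 2 gives interior $H^{1}$ estimates, Theorem 3 gives the strong convergence of a subsequence of $(u_{\epsilon})$ to $u_{0}$, Proposition 8 upgrades to $u_{0}\in H^{1}(\Omega)$, and Proposition 9 yields the $O(\epsilon)$ convergence rate on every $\omega_{1}'\subset\subset\omega_{1}$, completing the proof.
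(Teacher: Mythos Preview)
Your verification of (\ref{2}), (\ref{4}), (\ref{5}) and (\ref{43}) follows exactly the route the paper takes: global Lipschitz from (\ref{37}) and $h\in L^{\infty}$, growth with $r=2/q$ from (\ref{35}), the $X_{1}$-derivative estimate via $\nabla_{X_{1}}h\in L^{2/(1-q)}$ and H\"older, and the passage to the closed convex hull by a weak-compactness argument in $V$. The paper writes this last step as ``$(\nabla_{X_{1}}U_{n})$ bounded $\Rightarrow$ weak limit equals $\nabla_{X_{1}}U_{0}$'', which is precisely your Mazur remark. For Theorems~1--3 and Proposition~8 your proposal is therefore correct and essentially identical to the paper's.

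The gap is in the part concerning Proposition~9. The paper simply asserts that (\ref{44}) ``can be checked easily''; you rightly observe that for the nonlocal operators (\ref{39})--(\ref{40}) the cut-off sits at $X_{1}$ on the left of (\ref{44}) and at $X_{1}'$ on the right, so the inequality is not evident. However, your proposed substitute
\[
\|\rho(B(u)-B(v))\|_{L^{2}}\le C\,\|\rho(u-v)\|_{L^{2}}
\]
is \emph{false} for these operators and cannot be obtained ``by bringing the cut-off under the $X_{1}'$ integral''. Indeed, take $h\equiv 1$, $a(t)=t$, and $u-v=f(X_{1})g(X_{2})$ with $\operatorname{supp}f\cap\operatorname{supp}\rho=\varnothing$ but $\int_{\omega_{1}}f\neq 0$: then $\rho(u-v)\equiv 0$ while $\rho(B(u)-B(v))=\rho(X_{1})\big(\int_{\omega_{1}}f\big)g(X_{2})\not\equiv 0$. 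The nonlocality in $X_{1}'$ means that $B(u)-B(v)$ at $(X_{1},X_{2})$ feels $(u-v)(\cdot,X_{2})$ on all of $\omega_{1}$, so no localized estimate of this type can hold. Consequently the term $\int_{\Omega}(B(u_{\epsilon})-B(u_{0}))\rho^{2}(u_{\epsilon}-u_{0})\,dx$ in the proof of Proposition~9 cannot be absorbed into $\beta\|\rho(u_{\epsilon}-u_{0})\|_{L^{2}}^{2}$ by your argument, and the claimed $O(\epsilon)$ rate remains unproved. In short: your treatment of Theorems~1--3 and Proposition~8 matches the paper; your treatment of (\ref{44}) improves on the paper by spotting the difficulty, but the fix you propose does not work.
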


\begin{proof}
Take $B$ as in (\ref{40}) the proof of this theorem amounts to prove that
assumptions (\ref{2}), (\ref{4}), (\ref{5}), (\ref{43}) and (\ref{44}) hold.
(\ref{2}) follows directly from (\ref{37}) and (\ref{36}), Now assume (\ref%
{35}), (\ref{36}) then we can check easily that (\ref{4}) holds with $r=%
\frac{2}{q}.$ It remains to prove that (\ref{5}) holds. For every $u\in V$ (
we can also take $u\in L^{2}(\Omega )$)$,$ and $\varphi \in \mathcal{D}%
(\Omega )$ we have for $1\leq k\leq p$ 
\begin{multline*}
I(\varphi )=\left\vert \dint\limits_{\Omega }\left( \dint\limits_{\omega
_{1}}h(X_{1},X_{1}^{\prime },X_{2})a(u(X_{1}^{\prime },X_{2}))dX_{1}^{\prime
}\right) \partial _{k}\varphi (X_{1},X_{2})dX_{1}dX_{2}\right\vert  \\
=\left\vert \dint\limits_{\omega _{1}}\left( \dint\limits_{\Omega
}h(X_{1},X_{1}^{\prime },X_{2})\partial _{k}\varphi
(X_{1},X_{2})a(u(X_{1}^{\prime },X_{2}))dX_{1}dX_{2}\right) dX_{1}^{\prime
}\right\vert  \\
\leq \dint\limits_{\omega _{1}}\left\vert \left( \dint\limits_{\Omega
}h(X_{1},X_{1}^{\prime },X_{2})\partial _{k}\varphi
(X_{1},X_{2})a(u(X_{1}^{\prime },X_{2}))dX_{1}dX_{2}\right) \right\vert
dX_{1}^{\prime }
\end{multline*}

Since $\partial _{k}h\in L^{\frac{2r}{r-2}}(\omega _{1}\times \Omega )$ it
follows that for a.e $X_{1}^{\prime }\in \omega _{1}:$ $\partial _{k}\left[
a(u(X_{1}^{\prime },.))h(.,X_{1}^{\prime },.)\right] \in L^{\frac{2r}{r-2}%
}(\Omega )$, integrating by part we get 
\begin{eqnarray*}
I(\varphi ) &\leq &\dint\limits_{\omega _{1}}\left\vert \left(
\dint\limits_{\Omega }\partial _{k}h(X_{1},X_{1}^{\prime },X_{2})\varphi
(X_{1},X_{2})a(u(X_{1}^{\prime },X_{2}))dX_{1}dX_{2}\right) \right\vert
dX_{1}^{\prime } \\
&\leq &\left\Vert a(u)\right\Vert _{L^{r}}\left\vert \omega _{1}\right\vert
^{\frac{1}{2}}\left\Vert \partial _{k}h\right\Vert _{L^{\frac{2r}{r-2}%
}}\left\Vert \varphi \right\Vert _{L^{2}(\Omega )} \\
&\leq &M^{\prime }(1+\left\Vert u\right\Vert _{L^{2}})\left\Vert \varphi
\right\Vert _{L^{2}(\Omega )}
\end{eqnarray*}

And therefore $\partial _{k}B(u)\in L^{2}(\Omega )$, whence (\ref{43}) holds
and we have%
\begin{equation*}
\left\Vert \nabla _{X_{1}}B(u)\right\Vert _{L^{2}}\leq M^{\prime \prime
}(1+\left\Vert u\right\Vert _{L^{2}}),
\end{equation*}%
then for every $L^{2}-$bounded set $E\subset V$ we have%
\begin{equation}
\left\Vert \nabla _{X_{1}}B(u)\right\Vert _{L^{2}}\leq M^{\prime \prime
\prime },\text{ }u\in E.  \label{41}
\end{equation}%
\ Now, given a sequence $(U_{n})$ in $conv(B(E))$ which converges strongly
to some $U_{0}$ in $L^{2}(\Omega )$, by (\ref{41}) and the convexity of the
norm we show that $(\nabla _{X_{1}}U_{n})_{n}$ is bounded in $L^{2}(\Omega )$%
, hence one can extract a subsequence $(U_{n})$ such that $(\nabla
_{X_{1}}U_{n})$ converges weakly to some $c_{0}$ in $L^{2}(\Omega )$, thanks
to the continuity of derivation on $\mathcal{D}^{\prime }(\Omega )$ which
gives $c_{0}=\nabla _{X_{1}}U_{0}$ and therefore, $U_{0}\in V$ , whence (\ref%
{5}) follows. Finally, one can check easily that (\ref{44}) holds. Same
arguments when $B$ is given by (\ref{39})
\end{proof}

\subsection{\protect\bigskip A generalization}

Consider (\ref{39}) with%
\begin{equation}
h\in L^{\infty }(\Omega ),l\in L^{\infty }(\omega _{1}),\nabla _{X_{1}}l\in
L^{2}(\omega _{1}),  \label{45}
\end{equation}

the operator $u\rightarrow a\left( l(X_{1})\dint\limits_{\omega
_{1}}h(X_{1}^{\prime },X_{2})u(X_{1}^{\prime },X_{2})dX_{1}^{\prime }\right) 
$ belongs to a class of operators defined by%
\begin{equation}
B(u)=a\left( lP(u)\right) ,  \label{46}
\end{equation}

where $P:L^{2}(\Omega )\rightarrow L^{2}(\omega _{2})$ is a linear bounded
operator (an orthogonal projector for example). The method used by M. Chipot
and S. Guesmia is not applicable here, in fact the linear operator $P$ is
not necessarily compact, for $u_{n}\rightharpoonup u_{0}$ we only have $%
P(u_{n})\rightharpoonup P(u_{0})$ weakly and therefore every subsequence $%
(a\left( lP(u_{n})\right) )$ is not necessarily convergent in $L^{2}(\Omega )
$ strongly. However we have the following.

\begin{theorem}
Consider problem (\ref{6}) with $B$ given by (\ref{46}). Assume (\ref{1}), (%
\ref{3}), (\ref{34}), (\ref{37}), (\ref{35}) and (\ref{45}), then for $\beta 
$ suitably chosen, we have affirmations of \textbf{Theorems 1 , 2 }and%
\textbf{\ 3} of section 1 and moreover we have $u_{0}\in H^{1}(\Omega )$
\end{theorem}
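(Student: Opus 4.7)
The plan is to apply the machinery of Sections 1--4 to the operator (\ref{46}) by verifying the four structural assumptions (\ref{2}), (\ref{4}), (\ref{5}) and (\ref{43}) used in Theorems 1, 2, 3 and Proposition 8; the remaining hypotheses (\ref{1}), (\ref{3}), (\ref{34}) are built into the statement, and ``$\beta$ suitably chosen'' will be $\beta>M|\Omega|^{1/2-1/r}$ with $r=2/q$.

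Verification of (\ref{2}) and (\ref{4}) is routine. The Lipschitz assumption (\ref{37}), the bound $l\in L^\infty(\omega_1)$ from (\ref{45}) and the continuity of $P:L^2(\Omega)\to L^2(\omega_2)$ yield
\begin{equation*}
\|B(u)-B(v)\|_{L^2(\Omega)}\leq K\|l\|_\infty |\omega_1|^{1/2}\|P\|\,\|u-v\|_{L^2(\Omega)},
\end{equation*}
so $B$ is in fact globally Lipschitz on $L^2(\Omega)$, and (\ref{2}) holds. The growth bound (\ref{35}) with $q\in[0,1[$, combined with $qr=2$ for $r:=2/q>2$, yields (\ref{4}) after a direct integration using $\|l\|_\infty<\infty$ and the boundedness of $P$.

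The crucial point is (\ref{5}), and this is where the specific form of (\ref{46}) becomes essential. Because $P(u)$ depends only on $X_2$, the partial gradient $\nabla_{X_1}$ acts only on $l$; the Lipschitz chain rule applied to $a$ composed with the $X_1$-slice $X_1\mapsto l(X_1)P(u)(X_2)$, which lies in $H^1(\omega_1)$ for a.e.\ $X_2$ thanks to (\ref{45}), gives the pointwise identity
\begin{equation*}
\nabla_{X_1}B(u)=a'(lP(u))\,P(u)\,\nabla_{X_1}l\quad\text{a.e. in }\Omega,
\end{equation*}
whence
\begin{equation*}
\|\nabla_{X_1}B(u)\|_{L^2(\Omega)}\leq K\|\nabla_{X_1}l\|_{L^2(\omega_1)}\|P\|\,\|u\|_{L^2(\Omega)}.
\end{equation*}
This linear control gives (\ref{43}) at once and, simultaneously, shows that $B(u)\in V$ for every $u\in L^2(\Omega)$. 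To promote this to (\ref{5}) I would copy the final paragraph of the proof of Theorem 8: given an $L^2$-bounded $E\subset V$, convexity of the $L^2$-norm propagates the above bound from $B(E)$ to $\mathrm{conv}(B(E))$; for a limit $U_0$ in $L^2(\Omega)$ of a sequence $(U_n)\subset\mathrm{conv}(B(E))$, weak compactness supplies a subsequence with $\nabla_{X_1}U_n\rightharpoonup c_0$ in $L^2(\Omega)$, and continuity of derivation on $\mathcal{D}'(\Omega)$ identifies $c_0=\nabla_{X_1}U_0$, so $U_0\in V$.

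The conceptual obstacle the authors flag just before the statement is the possible non-compactness of $P$, which breaks the weak-to-strong passage $B(u_{\epsilon_k})\to B(u_0)$ that was available for (\ref{39}). The point of the present argument is that Theorems 1--3 never actually require strong continuity of $B$ along weakly convergent sequences; they only require the convex-hull condition (\ref{5}), which here is cheap because the $X_1$-regularity of $B(u)$ comes for free from the $X_1$-regularity of the scalar factor $l$. Once (\ref{2}), (\ref{4}), (\ref{5}) and (\ref{43}) are in hand, Theorems 1, 2, 3 give the first part of the conclusion and Proposition 8 upgrades $u_0\in H^1_{\mathrm{loc}}(\Omega)$ to $u_0\in H^1(\Omega)$.
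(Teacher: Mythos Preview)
Your proposal is correct and follows essentially the same route as the paper's own proof: verify (\ref{2}), (\ref{4}), (\ref{5}) and (\ref{43}) for the operator (\ref{46}) using the chain-rule identity $\nabla_{X_1}B(u)=a'(lP(u))\,P(u)\,\nabla_{X_1}l$, then conclude by quoting Theorems 1--3 and Proposition 8. You supply more detail than the paper (which merely says ``we finish the proof as in Theorem 8''), but the logical structure is identical.
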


\begin{proof}
The proof of this theorem amounts to prove that assumptions (\ref{2}), (\ref%
{4}), (\ref{5}) and (\ref{43}) hold. Since $P$ is Lipschitz then (\ref{2})
follows by (\ref{37}). We also can prove (\ref{4}) using (\ref{35}) with $r=%
\frac{2}{q}.$It remains to check that (\ref{5}), (\ref{43}) hold, for every $%
u\in V$ (we can take $u\in L^{2}(\Omega )$) we have $\nabla
_{X_{1}}a(lP(u))\in L^{2}(\Omega )$ and $\nabla _{X_{1}}a(lPu)=a^{\prime
}(lP(u))P(u)\nabla _{X_{1}}l.$ We can show easily that $\nabla
_{X_{1}}a(lP(E))$ is bounded for any $L^{2}-$bounded set $E\subset V$ and we
finish the proof as in \textbf{Theorem 8}.
\end{proof}

\end{document}